\newtheorem{theorem}{Theorem}
\newtheorem{assumption}[theorem]{Assumption}
\newtheorem{corollary}[theorem]{Corollary}
\newtheorem{lemma}[theorem]{Lemma}
\newtheorem{notation}[theorem]{Notation}
\newtheorem{proposition}[theorem]{Proposition}
\newtheorem{remark}[theorem]{Remark}
\newcommand{\fr}[2]{\frac{{#1}}{{#2}}}
\newcommand{\half}{\frac{1}{2}}
\newcommand{\thfr}{\frac{3}{4}}
\newcommand{\exponent}{\ensuremath{\gamma}}
\newcommand{\exponentt}{\ensuremath{\theta}}
\newcommand*\xbar[1]{%
   \hbox{%
     \vbox{%
       \hrule height 0.5pt % The actual bar
       \kern0.5ex%         % Distance between bar and symbol
       \hbox{%
         \kern-0.2em%      % Shortening on the left side
         \ensuremath{#1}%
         \kern-0.1em%      % Shortening on the right side
       }%
     }%
   }%
}
\DeclarePairedDelimiter\abs{\lvert}{\rvert}
\DeclarePairedDelimiter\norm{\lVert}{\rVert}
\newcommand{\dd}{\ensuremath{\mathrm{d}}}
\newcommand{\energy}{\ensuremath{E}}
\newcommand{\energygap}{\ensuremath{\mathcal{E}}}
\newcommand{\E}{\energygap}
\newcommand{\Ez}{{\E}_0}
\newcommand{\dissipation}{\ensuremath{D}}
\newcommand{\D}{\dissipation}
\newcommand{\V}{\ensuremath{V}}
\newcommand{\Vb}{\bar{V}}
\newcommand{\Vt}{\widetilde{\ensuremath{V}}}
\newcommand{\M}{\ensuremath{M}}
\newcommand{\Mt}{\widetilde{\ensuremath{M}}}
\newcommand{\Mb}{\bar{M}}
\newcommand{\step}{\underline}
\newcommand{\co}{e_*}
\newcommand{\T}{\ensuremath{\tau}}
\newcommand{\U}{\ensuremath{w}}
\newcommand{\g}{\ensuremath{\psi}}
\newcommand{\R}{\mathbb{R}}
\definecolor{darkred}{rgb}{0.9,0.1,0.1}
\def\XXint#1#2#3{{\setbox0=\hbox{$#1{#2#3}{\int}$}
\vcenter{\hbox{$#2#3$}}\kern-.5\wd0}}
\numberwithin{equation}{section}   %equations numbered by chapter
\numberwithin{theorem}{section}
\begin{document}

\title{
	Optimal $L^1$-type relaxation rates for the Cahn--Hilliard equation on the line}
\author{
	Felix Otto\begin{footnote}{MPI for Mathematics in the Sciences, E-mail address: \href{mailto:felix.otto@mis.mpg.de}{felix.otto@mis.mpg.de}.}\end{footnote},
	Sebastian Scholtes\begin{footnote}{RWTH Aachen University, E-mail address: \href{mailto:sebastian.scholtes@rwth-aachen.de}{sebastian.scholtes@rwth-aachen.de}.}\end{footnote} and
	Maria G. Westdickenberg\begin{footnote}{RWTH Aachen University, E-mail address: \href{mailto:maria@math1.rwth-aachen.de}{maria@math1.rwth-aachen.de}.}\end{footnote}
}
\date{\today}
\maketitle
\begin{abstract} In this paper we derive optimal algebraic-in-time relaxation rates to the kink for the Cahn-Hilliard equation on the line. We assume that the initial data have a finite distance---in terms of either a first moment or the excess mass---to a kink profile and capture the decay rate of the energy and the perturbation. Our tools include Nash-type inequalities, duality arguments, and Schauder estimates.\\

\noindent Keywords: energy--energy--dissipation, nonlinear pde, gradient flow, relaxation rates.\\

\noindent AMS subject classifications: 35K55, 49N99.
\end{abstract}
%\tableofcontents

\section{Introduction}\label{S:intro}
In this paper we derive relaxation rates in time for the one-dimensional Cahn--Hilliard equation
\begin{align}\label{ch}
\left\{
\begin{aligned}
	u_t&=-\bigl(u_{xx}-G'(u)\bigr)_{xx} && t>0,&& x\in \R,\\
 u&=u_0 &&t=0,&&x\in\R.
\end{aligned}\right.\end{align}
Here $G$ is a double-well potential with nondegenerate absolute minima at $\pm 1$ (cf.\ Assumption \ref{ass:G});  a canonical choice is $G(u)=\frac{1}{4}(1-u^2)^2$.
The Cahn--Hilliard equation was introduced by Cahn and Hilliard \cite{CH} as a phenomenological model for the phase separation of a binary alloy. It has since been widely used and studied in a variety of application areas as well as in  mathematics; the literature is vast but we refer for instance to \cite{CGS,E,ES,F,NCS}.

Equation \eqref{ch} represents the gradient flow with respect to the $\dot{H}^{-1}$ metric of the scalar Ginzburg--Landau energy
\begin{align}
E(u)=\int \frac{1}{2}u_x^2 +G(u)\,\dd x.\label{energy}
\end{align}
(Above and throughout, integrals and norms are over $\R$ unless otherwise noted.) The minimizer of the energy subject to $\pm 1$ boundary conditions at $\pm\infty$ plays a special role in the dynamics and is often referred to as ``the kink.'' We denote by $v$ the kink normalized
so that $v(0)=0$. (For the canonical potential, $v(x)=\tanh(x/\sqrt{2})$.) We denote the energy of $v$ by
\begin{align}\notag
  \co:=E(v)
  \end{align}
  and note for reference below that
  \begin{align}\notag
 -v_{xx}+G'(v)=0\quad\text{and}\quad \int \frac{1}{2}v_{x}^{2}\,\dd x =\int G(v)\,\dd x=\frac{1}{2}\co.
\end{align}
The other kinks consist of translations of the function $v$. At any given time we define the \emph{shifted kink} $v_{c}(x)=v(x-c)$ as an $L^{2}$
projection of the solution $u$ of \eqref{ch} onto the set of minimizers and we will refer to $c$ as \emph{the shift}.
For future reference we remark that $v_{c}$ satisfies the Euler-Lagrange equation
\begin{align}\label{eq:ELvc}
	\int (u-v_{c})v_{cx}\,\dd x=\int uv_{cx}\,\dd x=0.
\end{align}
We neither need nor assume uniqueness of the shifted kink.
We will often work with
\begin{align}
	f:=u-v,\qquad
	f_{c}:=u-v_{c},\quad\text{and}\quad F_c(x):=\begin{cases}\;\int_{-\infty}^x f_c(y)\,\dd y & x< c\\
	-\int_x^\infty f_c(y)\,\dd y &x>c,\end{cases}\label{fc}
\end{align}
as well as
\begin{align*}
   &\energygap := \energy(u) - \energy(v)=\energy(u)-\co&& \text{(the energy gap)}\\
  \text{and }\qquad &\dissipation := \int\Bigl(\bigl(G'(u) - u_{xx}\bigl)_x\Bigr)^2\dd x&& \text{(the dissipation).}
\end{align*}
We will make use of the gradient flow structure of \eqref{ch} only in the form
\begin{align}
\dot{\E}=-D\quad
\text{and}\quad\norm{u(t,\cdot)-u(s,\cdot)}_{\dot{H}^{-1}}\leq\int_s^t D^\frac{1}{2}\, \dd \tau.\label{diff}
\end{align}
Furthermore we will denote the initial energy gap by $ \Ez:=\E(u_0)$.
%\begin{align*}
 % \Ez:=\E(u_0).
%\end{align*}

In \cite{OW} a relaxation framework was introduced that combined algebraic and differential information on distance, energy gap, and dissipation in order to deduce the rate of relaxation to equilibrium. Specifically, it was shown that for initial disturbances such that
\begin{align*}
	H_{0}:= \norm{u_0-v}_{\dot H^{-1}}^2<\infty
\end{align*}
and if there exists $\epsilon>0$ such that
\begin{align*}
	\energygap_{0}=\E(u_0)\leq 2\co-\epsilon,
\end{align*}
then the energy gap and shift decay with rates
\begin{align}
  \E\lesssim \frac{H_0+\E_0}{t},\qquad c^2\lesssim   \frac{H_0+\E_0}{t^{\frac{1}{2}}}.\label{hed}
\end{align}
For an explanation of the $\lesssim$ notation see Notation \ref{not:lesssim}.
We remark that the information $H_{0}<\infty$ specifies that the longtime limit of the shift is zero.

In this paper we are interested in two different measures of distance from the kink. We will establish decay using and in terms of the quantities
\begin{align}
	 \M&:= \int \abs{F_{c}}\,\dd x &&\text{(the ``first moment'')}\label{eq:definitionM}
	 \intertext{or}
	\V&:= \int \abs{f_{c}}\,\dd x&&\text{(the ``excess mass'').}\label{eq:definitionV}
\end{align}
We are particularly interested in disturbances that
are controlled in $L^1$ but potentially far from the origin---and hence large in $\dot H^{-1}$.
Our result based on $V$ obtains relaxation rates that are---unlike the result
\cite{OW} based on $H_0$---independent of the distance of the disturbance from zero.

Our interest in $L^1$ disturbances is driven in part by coarsening problems. While the relaxation framework of \cite{OW} can be applied to coarsening, cf.\ \cite{SW}, it does not seem to be sufficient to
``go through'' coarsening events, since  the $\dot{H}^{-1}$ distance to the slow manifold need not be order one after the collision of two layers. The excess mass $\V$ seems to be a more natural distance for controlling the dynamics up to and beyond collision events. At the same time, the energy condition $\energygap_{0}\leq 2\co-\epsilon$ is natural
for coarsening: If one considers a solution with four kinks, then as the two nearest kinks collide, the energy is driven below $4e_*$ (and hence the energy gap below $2e_*$).

Notice that $M$ is weaker than the true first moment; indeed, it is easy to check that
\begin{align*}
  M\leq \int|x-c||f_c|\,\dd x,
\end{align*}
but it will be strong enough for our purposes. We define the initial values
\begin{align*}
	M_{0}:=M\bigl(u_0\bigr)\qquad\text{and}\qquad
	V_{0}:=V\bigl(u_0\bigr).
\end{align*}
Our main result is the following theorem.
\begin{theorem}\label{t:mainresult}
	Consider a solution $u$ of the Cahn-Hilliard equation \eqref{ch} where the potential $G$ satisfies Assumption~\ref{ass:G} below.
Suppose that there exists $\epsilon>0$ such that the initial data satisfy
	\begin{align}\label{eq:epsilon}
		\energygap_{0}\leq 2\co-\epsilon.
	\end{align}
	Then the following holds true.
	\begin{itemize}
		\item[(i)]
			If $M_{0}<\infty$ and $\int (u_0-v)\,\dd x=0$, then for all future times
\begin{align}\label{slowdown1}
M\lesssim M_0+1
\end{align}
and the disturbance decays according to
			\begin{align}\label{eq:energydecayM}
				\E\lesssim \min\left\{\E_0,\frac{M_0^2+1}{t^{\frac{3}{2}}}\right\}\quad\text{and}\quad
				 c^2\lesssim\min\left\{M_0^\frac{2}{3}+1,\frac{M_0^2+1}{t}\right\}.
			\end{align}
		\item[(ii)]
			If $V_{0}<\infty$ and $\int (u_0-v)\,\dd x=0$, then for all future times
\begin{align}\label{slowdown2}
V\lesssim V_0+1
\end{align}
and the disturbance is controlled by
			\begin{align}\label{eq:energydecayV}
				\E\lesssim \min\left\{\E_{0},\frac{\V_{0}^{2}+1}{t^{\frac{1}{2}}}\right\}\quad\text{and}\quad
				c^2\lesssim \V_{0}^2+1.
			\end{align}
	\end{itemize}
\end{theorem}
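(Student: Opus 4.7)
The plan is to follow the energy--energy--dissipation framework of \cite{OW}: couple a Nash-type functional inequality linking $\E$, $D$, and the moment-type quantity ($M$ or $V$) with the gradient-flow identity $\dot\E=-D$ to produce an ODE for $\E$, then close the loop with an a priori slow-down estimate that propagates the initial value $M_0$ or $V_0$ to all future times. The shift bounds will follow a posteriori from the Euler--Lagrange condition \eqref{eq:ELvc} combined with a duality argument between $\dot H^{-1}$ and the $L^1$-based quantities.

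The analytic core is a pair of Nash-type inequalities, valid whenever $\E\le 2\co-\eps$:
\begin{align*}
 \E^{5/3}\lesssim (M+1)^{4/3}\,D\qquad\text{and}\qquad \E^{3}\lesssim (V+1)^{4}\,D,
\end{align*}
whose exponents are forced by matching the claimed decay rates under $\dot\E=-D$. To derive them I would linearize around the shifted kink $v_c$, use the Euler--Lagrange orthogonality \eqref{eq:ELvc} to quotient out the flat direction $v_{cx}$, and split $\R$ into the transition zone and the asymptotic regions, where $G''(\pm 1)>0$ yields spectral coercivity. A one-dimensional Nash-type interpolation will convert the $L^2$ control of $f_c$ coming from $\E$ into the $L^1$ control carried by $V$ (resp.\ the $\dot W^{-1,1}$ control carried by $M$), while the dissipation is bounded below by negative-Sobolev duality and Schauder estimates on the linearized operator $-\partial_{xx}+G''(v_c)$. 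The hypothesis $\E_0\le 2\co-\eps$ is essential, ruling out the nucleation of additional kinks and keeping $u$ uniformly in the linearizable regime.

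Next, I would derive the slow-down estimates \eqref{slowdown1} and \eqref{slowdown2}. The differential inequality \eqref{diff} gives $\|u(t)-u(s)\|_{\dot H^{-1}}\le\int_s^t D^{1/2}\,\dd\tau$, and pairing this with suitable dual test functions controls the time variation of $V$ and $M$ by $\int_s^t D^{1/2}\,\dd\tau$ times moderate factors of $\E$. Combined with the Nash inequality above and the a priori bound $\E\le\E_0$, these integrals are finite and absorbable into $M_0$ (resp.\ $V_0$). The zero-mean assumption $\int(u_0-v)\,\dd x=0$ is needed here to pin down the longtime shift, so that $M$ and $V$ remain meaningful measures of distance throughout the evolution.

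Inserting the slow-down bounds back into the Nash inequalities yields the ODEs $\dot\E\lesssim -\E^{5/3}/(M_0+1)^{4/3}$ and $\dot\E\lesssim -\E^{3}/(V_0+1)^{4}$, which integrate to the advertised algebraic decay of $\E$, the minimum against $\E_0$ encoding the trivial energy-decreasing bound. For the shift I would Taylor-expand \eqref{eq:ELvc} in $c$ to obtain $c^2\lesssim M$ (resp.\ $c^2\lesssim V$) up to lower-order $\E$-terms; combined with the decay of $\E$ and the slow-down, this produces \eqref{eq:energydecayM} and \eqref{eq:energydecayV}, the weaker shift bound in case (ii) reflecting the fact that $V$ lacks the spatial weight of $M$. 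The main obstacle is the Nash-type inequality of the second step: extracting the sharp exponents in the presence of the nonlinearity $G'(u)$ and matching them against $L^1$-based norms is delicate, and this is precisely where the Schauder estimates and duality arguments mentioned in the abstract enter.
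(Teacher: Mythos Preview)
Your overall architecture---Nash-type inequality, slow-down, ODE integration, shift bound---matches the paper's, and your Nash inequalities are equivalent to the paper's \eqref{al.1} and \eqref{eq:nash}. But you have the hard and easy steps inverted, and the slow-down argument as you describe it has a genuine gap.

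The Nash inequalities are in fact the \emph{easy} part. Once one invokes the characterizations $\E\sim\int f_c^2+f_{cx}^2\,\dd x$ and $D\sim\int f_{cx}^2+f_{cxx}^2+f_{cxxx}^2\,\dd x$ from \cite[Lemma~1.3]{OW} (quoted here as Lemma~\ref{l:eed}), the inequalities follow from elementary one-dimensional interpolation such as $\int F_{cx}^2\lesssim(\int F_{cxx}^2)^{3/5}(\int|F_c|)^{4/5}$; no spectral decomposition, no Schauder theory. So your proposed machinery for this step is heavier than needed.

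The real difficulty is the slow-down estimates \eqref{slowdown1}--\eqref{slowdown2}, and here your plan does not close. Pairing the $\dot H^{-1}$ bound $\|u(t)-u(s)\|_{\dot H^{-1}}\le\int_s^t D^{1/2}\,\dd\tau$ with test functions cannot directly control $M=\int|F_c|$ or $V=\int|f_c|$: the dual of $L^1$ is $L^\infty$, not $\dot H^1$, so there is no obvious test function $\varphi\in\dot H^1$ whose pairing with $u(t)-u(s)$ recovers an $L^1$-type quantity. What the paper does instead is a genuine duality argument in the spirit of Niethammer--Vel\'azquez: one writes $\widetilde M=\sup\{\int f\psi:|\psi_x|\le1,\psi(0)=0\}$, lets $\zeta$ solve the \emph{backward} linear problem $\zeta_t+G''(1)\zeta_{xx}-\zeta_{xxxx}=0$ on the half-line with terminal data $\psi$, and computes $\frac{\dd}{\dd t}\int f\zeta\,\dd x$ using the full PDE \eqref{eq:ft} for $f_t$. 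The resulting terms are then estimated via semigroup bounds on $\zeta$ (Proposition~\ref{paraboliczeta}) together with the integral dissipation bound $\int_0^T D^\gamma\,\dd t\lesssim\bar M^{4(1-\gamma)/3}$. For $V$ the same scheme applies, but now on a \emph{moving} domain $x<\gamma(t)$ tracking the shift; treating the boundary motion perturbatively is exactly where the Schauder estimates (Proposition~\ref{prop:Schauder}) enter---not in the Nash step.

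Finally, the shift bounds do not come from Taylor-expanding \eqref{eq:ELvc}. They come from mass conservation: $\int(u-v)\,\dd x=0$ gives $2c=\int f_c\,\dd x$, whence $|c|\le\tfrac12 V$ directly in case~(ii), and $|c|\lesssim\sup|F_c|\lesssim\E^{1/3}M^{1/3}$ via interpolation in case~(i).
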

\begin{remark}
The proof in  case (ii) is more involved than that in case (i), since it requires establishing the same decay estimates for the dual equation in the case of a moving boundary as on a fixed domain; cf. Subsection \ref{subsection:parabolicvariable}. While the proof is more complicated, the result is  more surprising: We see that $L^1$ perturbations---which, unlike $M$ or the $\dot{H}^{-1}$ distance considered in \cite{OW} are independent of the distance from the origin---remain bounded and that this suffices to deduce $t^\frac{1}{2}$ energetic decay and boundedness of the shift $c$.
\end{remark}
\begin{remark}
In case (i) we obtain the same decay as for diffusion on the half-line with vanishing boundary data.
This behavior is natural because
on large scales, the kink acts as a sink.
We refer the reader to the heuristics in \cite[Subsection 1.3]{OW} for a formal justification of this remark and a discussion of optimality.
See also the remarks in Subsection \ref{ss:previous}, below.
\end{remark}
\begin{remark}
  One should view the condition $\int (u_0-v)\,\dd x=0$ as a normalization. If instead
  \begin{align*}
    \int (u_0-v(\cdot-c_*))\dd x=0
      \end{align*}
 for some $c_*\neq 0$, then decay is obtained for $\abs{c(t)-c_*}$.
\end{remark}

Using the differential inequality
\begin{align}
  \frac{dD}{dt}\lesssim D^\frac{3}{2}\label{dode}
\end{align}
(cf. \cite[(1.24)]{OW}) and arguing as in the proof of \cite[(1.15)]{OW}, we obtain the following corollary.
\begin{corollary}
  In case (i), the dissipation satisfies the pointwise bound
  \begin{align*}
    D\lesssim \frac{M_0^2+1}{t^\frac{5}{2}}.
  \end{align*}
\end{corollary}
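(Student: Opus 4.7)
The plan is the standard bootstrap that converts the time-integrated control of $D$ (implied by the energy decay) into a pointwise control, using the growth estimate $\dot D \lesssim D^{3/2}$ as an obstruction against rapid decrease; this is the strategy behind \cite[(1.15)]{OW}.

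Concretely, for fixed $t > 0$ I would first combine the gradient-flow identity $\dot{\E} = -D$ with the energy decay $\E \lesssim (M_0^2+1)/s^{3/2}$ from Theorem~\ref{t:mainresult}(i) to obtain
\[
    \int_{t/2}^{t} D(s)\,\dd s \;=\; \E(t/2) - \E(t) \;\leq\; \E(t/2) \;\lesssim\; \frac{M_0^2+1}{t^{3/2}}.
\]
The mean value theorem then produces some $s^\ast \in [t/2, t]$ with $D(s^\ast) \lesssim (M_0^2+1)/t^{5/2}$.

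Next I would rewrite $\dot D \lesssim D^{3/2}$ in the equivalent form $(D^{-1/2})' \gtrsim -1$ and integrate from $s^\ast$ to $t$ to get
\[
    D(t)^{-1/2} \;\gtrsim\; D(s^\ast)^{-1/2} - (t - s^\ast) \;\gtrsim\; D(s^\ast)^{-1/2} - t.
\]
Whenever the first term on the right dominates---equivalently when $D(s^\ast) \lesssim t^{-2}$---the propagation is essentially lossless and yields $D(t) \lesssim D(s^\ast) \lesssim (M_0^2+1)/t^{5/2}$, which is the claimed pointwise bound.

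The main point requiring care is the regime in which $D(s^\ast)^{-1/2}$ and $t$ are comparable, so that the direct propagation degenerates. Following the pattern of \cite[(1.15)]{OW}, I would handle this either by re-running the MVT+ODE step on a shorter sub-interval $[(1-\delta)t, t]$ with $\delta$ matched to the natural ODE time-scale $1/\sqrt{D(s^\ast)}$, or by absorbing the short-time behaviour into constants via the a priori bound $\E \le \E_0$ together with parabolic regularization. The delicate step will be to choose $\delta$ so that the sharp scaling $t^{-5/2}$ is preserved.
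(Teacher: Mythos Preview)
Your proposal is correct and follows exactly the route the paper takes: the paper's proof consists solely of invoking the differential inequality \eqref{dode} together with the argument of \cite[(1.15)]{OW}, and you have accurately reconstructed that argument (integral control of $D$ from $\dot\E=-D$ and the energy decay, mean-value selection of a good time, then forward propagation via $\dot D\lesssim D^{3/2}$), including correctly flagging the one delicate point about matching the sub-interval length to the ODE time-scale.
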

Using \eqref{linf} below, the energy and dissipation bounds yield the following $L^\infty$-estimate.
\begin{corollary}
  In case (i), the solution converges to the $L^2$-closest kink with rate
  \begin{align}
    \norm{f_c}_\infty\lesssim \frac{M_0+1}{t}.\label{infdecay}
  \end{align}
\end{corollary}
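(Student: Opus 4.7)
The plan is simply to combine the two pointwise-in-time decay bounds already in hand---the energy decay $\E \lesssim (M_0^2+1)/t^{3/2}$ from Theorem~\ref{t:mainresult}(i) and the dissipation decay $D \lesssim (M_0^2+1)/t^{5/2}$ from the preceding corollary---and to feed them into the interpolation estimate \eqref{linf}. Matching both the $t$-scaling and the $M_0$-scaling of the target estimate \eqref{infdecay} forces \eqref{linf} to take the form
\[
  \norm{f_c}_\infty^{4} \lesssim \E \cdot D,
\]
and I proceed on that understanding.

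Once \eqref{linf} is granted, the proof collapses to a single computation:
\[
  \norm{f_c}_\infty^{4} \lesssim \frac{M_0^2+1}{t^{3/2}} \cdot \frac{M_0^2+1}{t^{5/2}} = \frac{(M_0^2+1)^{2}}{t^{4}}.
\]
Taking fourth roots and using $\sqrt{M_0^2+1}\leq M_0+1$ yields precisely \eqref{infdecay}.

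The only genuine content is therefore the inequality \eqref{linf} itself, which is stated and presumably proved elsewhere in the paper. I expect its proof to rest on the one-dimensional Gagliardo--Nirenberg inequality $\norm{g}_\infty^{2}\leq 2\norm{g}_{L^2}\norm{g_x}_{L^2}$, applied at two different derivative levels so as to pair an $\E^{1/2}$-factor (controlling low-order Sobolev norms of $f_c$) with a $D^{1/2}$-factor (controlling a higher-order norm through the linearization $G'(u)-u_{xx} \approx -f_{cxx} + G''(v_c) f_c$ of the chemical potential). The step I would expect to be the actual obstacle is coercively comparing $\E$ with $\norm{f_c}_{L^2}^2 + \norm{f_{cx}}_{L^2}^2$ in spite of the locally negative $G''(v_c)$ near the transition region; this is where the energetic threshold \eqref{eq:epsilon} and the choice of $c$ as the $L^2$-closest kink via \eqref{eq:ELvc} are needed.
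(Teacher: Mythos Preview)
Your proof is correct and follows exactly the paper's approach: the paper merely states that \eqref{linf} together with the energy and dissipation bounds yields the corollary, and you have written out precisely that computation. Your expectation about the content of \eqref{linf}---that it rests on one-dimensional interpolation together with the coercivity estimates \eqref{eq:scale_E}, \eqref{eq:est_D} from Lemma~\ref{l:eed}, which in turn require the energy threshold \eqref{eq:epsilon} and the orthogonality \eqref{eq:ELvc}---is also accurate.
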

\begin{remark}[Scale separation in case (i)]
As in \cite{OW}, we obtain in case (i) a faster relaxation (at rate $t^{-1}$ in $L^\infty$) to \textbf{some kink} via \eqref{infdecay} followed by a slower relaxation (at rate $t^{-\frac{1}{2}}$ in $L^\infty$) to \textbf{the centered kink} via the triangle inequality and \eqref{eq:energydecayM}.
\end{remark}
\begin{remark}[$L^\infty$ bound and scale separation in case (ii)]
We can use \eqref{dode} to deduce an $L^\infty$ bound in case (ii) as well, but the rate does not appear to be optimal. The result is
\begin{align*}
      D\lesssim \max\left\{\frac{V_0^2+1}{t^\frac{3}{2}}, \frac{V_0^4+1}{t}\right\}
\end{align*}
and
\begin{align*}
   \norm{f_c}_\infty\lesssim \min\left\{\frac{V_0+1}{t^\frac{1}{4}},\frac{V_0^\frac{3}{2}+1}{t^\frac{3}{8}}\right\}.
\end{align*}
Hence we observe relaxation
at the long-term rate $t^{-\frac{3}{8}}$ in $L^\infty$ to the $L^2$-closest kink and no rate of convergence to the centered kink.
\end{remark}

\subsection{Remarks on the method}

Our method uses a combination of basic algebraic relationships among the quantities $\energygap, c, \dissipation, \M,$ and $\V$ and a duality argument to establish boundedness of $\M$ or $\V$ in terms of its initial value. For future reference, we point out that, according to \eqref{diff}, the energy gap is decreasing. We will use this often together with the assumption that the initial energy gap is order one:
\begin{align}\label{eq:Edecreassing}
	\E\leq \E_{0}\lesssim 1\qquad\text{for all }t\geq 0.
\end{align}
Next we recall the following estimates from \cite[Lemma 1.3]{OW}.
\begin{lemma}\label{l:eed}
	Suppose that $u$ is smooth and there exists $\epsilon>0$ such that $\E(u)\leq 2\co-\epsilon$. Then
	\begin{align}
		\energygap&\sim \int f_{c}^{2}+f_{cx}^{2}\,\dd x,\label{eq:scale_E}\\
		\D&\sim \int f_{cx}^{2}+f_{cxx}^{2}+f_{cxxx}^{2}\,\dd x.\label{eq:est_D}
	\end{align}
\end{lemma}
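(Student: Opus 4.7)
I would decompose $u = v_c + f_c$ so that the orthogonality $\int f_c v_{cx}\,\dd x = 0$ from (\ref{eq:ELvc}) holds by construction. The proof of both equivalences follows a ``perturbative + compactness'' template: near $v_c$ the linearized operator controls everything; away from $v_c$ the strict barrier $\E \leq 2\co - \epsilon$ excludes bifurcation into a two-kink configuration.

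\textbf{Proof of (\ref{eq:scale_E}), perturbative regime.} Taylor-expand $E(v_c+f_c)$ and use the Euler-Lagrange equation $-v_{cxx}+G'(v_c)=0$ to kill the linear term:
\begin{equation*}
\E \;=\; \tfrac{1}{2}\langle L f_c, f_c\rangle_{L^2} + R(f_c), \qquad |R(f_c)|\lesssim \norm{f_c}_\infty \norm{f_c}_{L^2}^{2},
\end{equation*}
where $L := -\partial_{xx} + G''(v_c)$ is the linearized Schr\"odinger operator. Since $v_{cx} > 0$ satisfies $L v_{cx} = 0$ (differentiate the Euler-Lagrange equation), it is the non-degenerate ground state of $L$ (the potential $G''(v_c)$ being bounded below at infinity by $\min\{G''(\pm 1)\}>0$), so $L$ enjoys a spectral gap $\lambda > 0$ on $\{v_{cx}\}^\perp$. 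Hence $\langle Lg,g\rangle \geq \lambda \norm{g}_{H^1}^{2}$ on $\{v_{cx}\}^\perp$, and combined with the Sobolev embedding $\norm{f_c}_\infty \lesssim \norm{f_c}_{H^1}$ to absorb $R$, this proves (\ref{eq:scale_E}) whenever $\norm{f_c}_{H^1}$ lies below a fixed threshold.

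\textbf{Non-perturbative regime---the main obstacle.} When $\norm{f_c}_{H^1}$ sits above that threshold, both sides of (\ref{eq:scale_E}) are comparable to $1$, and making this quantitative is the delicate step. I would argue by contradiction and concentration-compactness: any sequence $u_n$ with $\E_n \leq 2\co - \epsilon$ and the orthogonality, for which (\ref{eq:scale_E}) fails, is either (up to the translation fixed by orthogonality) strongly compact, so a subsequence converges to a kink and we return to the perturbative regime, or splits into two or more well-separated kink-like profiles whose combined energy forces $\liminf \E_n \geq 2\co$, contradicting the strict barrier. This is where the implicit constants acquire their $\epsilon$-dependence.

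\textbf{Proof of (\ref{eq:est_D}).} Write $G'(u)-u_{xx} = L f_c + N(f_c)$ with $|N(f_c)|\lesssim f_c^{2}$ and differentiate: $(G'(u)-u_{xx})_x = (L f_c)_x + \text{h.o.t.}$ The map $\partial_x L : H^3 \cap \{v_{cx}\}^\perp \to L^2$ has trivial kernel (if $(L f)_x \equiv 0$ then $L f$ is a constant, forced to zero by decay at infinity, so $f \in \mathrm{span}\{v_{cx}\}$, hence $f=0$ by orthogonality); combined with elliptic regularity for $L$ and a weighted Poincar\'e inequality (weight $v_{cx}$, justified by the orthogonality, used to absorb the $\int v_{cx}^{2}f_c^{2}$ cross-terms into $\int f_{cx}^{2}$), this yields
\begin{equation*}
\norm{(L f_c)_x}_{L^2}^{2} \;\sim\; \norm{f_{cx}}_{L^2}^{2} + \norm{f_{cxx}}_{L^2}^{2} + \norm{f_{cxxx}}_{L^2}^{2}.
\end{equation*}
Nonlinear contributions are absorbed using $\norm{f_c}_\infty$-smallness from (\ref{eq:scale_E}) together with the energy bound, and the non-perturbative regime is handled exactly as above.
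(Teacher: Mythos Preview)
The paper does not give its own proof of this lemma; it is recalled verbatim from \cite[Lemma~1.3]{OW}. So there is no in-paper argument to compare against, only the external reference.

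Your sketch is the standard route and is, in outline, what the proof in \cite{OW} does: linearize around $v_c$, use that $v_{cx}>0$ is the ground state of $L=-\partial_x^2+G''(v_c)$ so that $L$ has a spectral gap on $\{v_{cx}\}^\perp$, and then handle the large-$\|f_c\|_{H^1}$ regime by an energy-rigidity (Modica--Mortola) argument exploiting $\E<2\co-\epsilon$. Two points deserve a more careful treatment than you give them. First, the ``weighted Poincar\'e'' step $\int v_{cx}^2 f_c^2\,\dd x\lesssim\int f_{cx}^2\,\dd x$ is correct but not quite immediate; the clean way is to use the orthogonality $\int f_c v_{cx}\,\dd x=0$ together with $\int v_{cx}\,\dd x=2$ to write $2f_c(c)=\int(f_c(c)-f_c(x))v_{cx}(x)\,\dd x$, whence $|f_c(c)|\lesssim\|f_{cx}\|_{L^2}$, and then propagate from $x=c$ using the exponential localization of $v_{cx}$. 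Second, in the non-perturbative regime you assert that both sides of \eqref{eq:scale_E} are $\sim 1$, but showing $\int f_c^2\,\dd x\lesssim 1$ from $\E\lesssim 1$ is itself part of the content: it requires the Modica--Mortola observation that $E(u)<2\co$ forbids a second transition layer, so that $u$ and $v_c$ agree in sign in the tails. Your concentration-compactness paragraph gestures at this but does not isolate it. Finally, note that Assumption~\ref{ass:G} only demands $G\in C^2$, so your cubic Taylor remainder $|R(f_c)|\lesssim\|f_c\|_\infty\|f_c\|_{L^2}^2$ and the appearance of $G'''$ in $(Lf_c)_x$ should be replaced by the $C^2$-modulus-of-continuity versions; this is cosmetic but worth flagging.
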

We observe that \eqref{eq:scale_E} and \eqref{eq:est_D} together with \eqref{eq:Edecreassing} imply
\begin{align}\label{linf}
	\norm{f_c}_\infty\lesssim \min\left\{\E^{\frac{1}{2}},(\E\D)^{\frac{1}{4}}\right\}\lesssim 1\qquad\text{and}\qquad
	\norm{f_{cxx}}_\infty\lesssim \dissipation^{\frac{1}{2}},
\end{align}
where here and below $\norm{\cdot}_\infty$ refers to the supremum over $\R$ or $(0,\infty)$, depending on the context.

Other important algebraic relationships that we will establish and use are the Nash-type inequalities
\begin{align}
		\E & \lesssim \D^{\frac{3}{5}}(M+1)^{\frac{4}{5}}\qquad\text{and}\qquad
	\energygap \lesssim \dissipation^{\frac{1}{3}}(V+1)^{\frac{4}{3}}.\label{nashy}
	\end{align}
(See \eqref{al.1} and \eqref{eq:nash} below.) Such inequalities are similar to the kind of nonlinear estimates used as a first step by Nash in his classical derivation of heat kernel bounds.
We will see that it is straightforward to deduce from \eqref{nashy} the decay of the energy gap $\energygap$ in time \emph{in terms of the supremum of $M$ or $V$}, respectively. Hence it will be critical for our argument to show that $M$ and $V$ in fact \emph{remain bounded in terms of their initial values}. This fact we establish via a duality argument similar for instance to that used by Niethammer and Vel\'azquez to show existence of self-similar solutions of Smoluchowski's equation, cf.~\cite[Subsection 1.3]{NV}.

In order to carry out the duality argument, we will need semigroup estimates on the linearized operator $\partial_t-G''(1)\partial_x^2+\partial_x^4$ (or rather its dual). For $M$, it is sufficient to consider the half-space $x>0$. For $V$, however, we have to work on a variable domain $x>\gamma(t)$. In order to treat this case perturbatively, we need Schauder estimates on $x>\gamma(t)$. The corresponding H\"older norm is defined with regard
to the Carnot-Carath\'eodory metric associated to the dual of $\partial_t-G''(1)\partial_x^2+\partial_x^4$:
\begin{align}\label{eq:hoelder_semi_norm}
	 [f]_{\alpha}:=\sup_{t,x}\sup_{s,z}\frac{\abs{f(t+s,x+z)-f(t,z)}}{(\abs{s}+( z^2 \wedge z^4 ))^{\alpha}},
\end{align}
where $\alpha\in (0,\frac{1}{4})$ and $\wedge$ is defined in Notation \ref{N:vee}, below.
Notice that if $f$ is a function of time alone, then  $[f]_{\alpha}$ reduces to the usual H\"older seminorm.

Finally, we note for reference below that the function $f$ defined in \eqref{fc} satisfies
\begin{align}
	f_{t}&=\bigl(G'(v_{c}+f_{c})-G'(v_{c})-f_{cxx}\bigr)_{xx}.\label{eq:ft}
\end{align}

\subsection{Previous results and optimality}\label{ss:previous}

A more elaborate review of previous results and discussion of optimality is available in \cite[Subsections 1.3 and 1.4]{OW}; here we give a brief summary.

Using a renormalization group approach, Bricmont, Kupiainen, and Taskinen \cite{BKT} consider \eqref{ch} with initial data given by
\begin{align*}
	u_{0}(x)=v(x)+f(x),\qquad\text{where}\qquad
	\sup_x\bigl((1+\abs{x}^{p})f(x)\bigr)\leq \delta
\end{align*}
for \emph{some $\delta>0$ sufficiently small} and some $p>3$. They deduce the convergence rate
\begin{align*}
	\sup_{x}\,\abs{u(t,x)-v(x)}\lesssim \frac{1}{t^{\frac{1}{2}}}.
\end{align*}
Carlen, Carvalho, and Orlandi \cite{CCO}, on the other hand, assume bounded second moments and $L^{2}$-closeness to the shifted kink in the form
\begin{align*}
	\int x^{2}(u_{0}-v_{c})^{2}\,\dd x\lesssim 1\qquad\text{and}\qquad
	\int (u_{0}-v_{c})^{2}\,\dd x\leq \delta
\end{align*}
for \emph{some $\delta>0$ sufficiently small}. They show that for any $\epsilon>0$, the energy gap and $L^{1}$ distance to the centered kink satisfy
\begin{align*}
	\E(u)\lesssim \frac{1}{(t+1)^{\frac{9}{13}-\epsilon}}\qquad\text{and}\qquad
	\int \abs{u(t,x)-v(x)}\,\dd x\lesssim \frac{1}{(t+1)^{\frac{5}{52}-\epsilon}}.
\end{align*}
Finally, Howard \cite{H} considers \eqref{ch} for initial data satisfying the spatial decay assumption
\begin{align}
	\abs{u_{0}(x)-v(x)}\leq \delta \frac{1}{1+x^{2}},\label{How1}
\end{align}
for \emph{some $\delta>0$ sufficiently small}. Using a careful spectral analysis of the linearized operator, he establishes
the \textbf{optimal} relaxation rates
\begin{align*}
	\norm{f_{c}}_\infty\lesssim \frac{1}{t+1}\qquad\text{and}\qquad
	\abs{c}\lesssim \frac{1}{t^\frac{1}{2}+1}.
\end{align*}
All three results \cite{BKT, CCO, H} are perturbative in the sense that they assume initial data satisfying a smallness assumption in terms of some sufficiently small parameter $\delta$.
As mentioned above (cf.\ the discussion around \eqref{hed}),  the previous work \cite{OW} deduces decay rates based on the initial energy bound $\Ez\leq 2e_*-\epsilon$ for $\epsilon>0$ and finiteness---\textbf{not smallness}---of the initial $\dot{H}^{-1}$ distance.

In this paper we measure the distance to the slow manifold in a different metric, using either
the first moment $\M$ or the excess mass $\V$ as a measure of distance. As in \cite{OW} for the $\dot{H}^{-1}$ distance, we require only that $M_0$ or $V_0$ is \emph{finite}. No smallness is assumed. On the level of scaling, our functional $M$ represents the same control as Howard's assumption \eqref{How1}, but in the (weaker) $L^1$ sense. The decay result we obtain in this case (cf. \eqref{eq:energydecayM} and \eqref{infdecay}) is the best possible; even compactly supported initial perturbations do not yield faster rates of decay. Again we refer to \cite[Subsection 1.3]{OW} for a formal justification of this claim.

\subsection{Conventions and organization}
We make the following (standard) assumption on the double-well potential.
\begin{assumption}\label{ass:G}
	The double-well potential $G$
%	\begin{align*}
%		G(u)=\frac{1}{4}\left(1-u^2\right)^2,
%	\end{align*}
	is assumed to satisfy
	\begin{itemize}
		\item
			$G$ is $C^2(\R)$ and even,
		\item
			$G(u)>0$ for $u\not=\pm 1$ and $G(\pm 1)=0$,
		\item
			$G'(u)\leq 0$ for $u\in [0,1]$ and $G''(\pm 1)>0$.
	\end{itemize}
	The assumption that $G$ is even is for convenience and could be relaxed.
\end{assumption}

Often we will make use of the following shorthand notations.
\begin{notation}\label{not:lesssim}
	Throughout the paper we use the notation
	\begin{align*}
		A\lesssim B
	\end{align*}
	if there exists a  constant $C\in (0,\infty)$ such that $A \leq CB$. Unless otherwise noted the constant $C$ depends at most on
	the potential $G$ and the constant $\epsilon>0$ in \eqref{eq:epsilon}.
If $A\lesssim B$ and $B\lesssim A$, we write $A\sim B$.
\end{notation}
\begin{notation}\label{N:vee}
  We use $A\wedge B$ and $A\vee B$ to denote the minimum of $\{A, B\}$ and the maximum of $\{A, B\}$, respectively.
\end{notation}

In Sections \ref{section:M} and \ref{section:V} we give the proofs of \eqref{slowdown1}--\eqref{eq:energydecayM} and \eqref{slowdown2}--\eqref{eq:energydecayV}, respectively. The proofs of the parabolic estimates are given in Section \ref{S:paraproof}. The estimates for $\V$ rely on Schauder estimates on the half-line (cf.\ Proposition \ref{prop:Schauder}), whose proof is deferred to Appendix \ref{section:proof_schauder}.
	
\section{Relaxation under bounded initial first moment}\label{section:M}

Here we collect the ingredients that we will use to prove the relaxation estimate under the assumption of bounded initial first moment, i.e., case $(i)$ of Theorem \ref{t:mainresult}. With the lemmas in hand, we easily deduce \eqref{slowdown1} and \eqref{eq:energydecayM}. The proofs of the supporting lemmas are given in Subsection~\ref{ss:pflemmaM}.

Throughout Section \ref{section:M}, we work \emph{under the assumption} \eqref{eq:epsilon}.

\subsection{Parabolic estimates for $M$}
For the duality argument for $M$ we will need the following (standard) estimates for the solution of a fourth order equation on the half-line.
\begin{proposition}\label{paraboliczeta}
	Let $\zeta$ solve the backwards problem
	\begin{align}\label{eq:zetaequa}
		\left\{
		\begin{aligned}
			\zeta_t+G''(1)\zeta_{xx}-\zeta_{xxxx}&=0		&&\text{on }	&&t\in [0,T),	&&x\in (-\infty,0),\\
			\zeta=\zeta_{xx}&=0								&&\text{for }	&&t\in [0,T),	&& x=0,\\
			\zeta&=\g										&&\text{for }	&&t=T,			&& x\in (-\infty,0], \\
		\end{aligned}
		\right.
	\end{align}
	where $\g$ satisfies $\norm{\g_x}_\infty\leq 1$.
	There holds
	\begin{align}
		\norm*{\zeta_x}_{\infty}&\lesssim 1,\label{zetax}\\
		 \norm*{\zeta_{xx}}_{\infty}&\lesssim\frac{1}{(T-t)^{\frac{1}{2}}}\wedge\frac{1}{(T-t)^{\frac{1}{4}}},\label{zetaxx}\\
		 \norm*{\zeta_{xxx}}_{\infty}&\lesssim\frac{1}{T-t}\wedge\frac{1}{(T-t)^{\frac{1}{2}}},\label{zetaxxx}\\
		 \norm*{\zeta_{xxxx}}_{\infty}&\lesssim\frac{1}{(T-t)^{\frac{3}{2}}}\wedge\frac{1}{(T-t)^{\frac{3}{4}}},\label{zetaxxxx}\\
		 \norm{\zeta_{xxxxx}}_{\infty}&\lesssim\frac{1}{(T-t)^{2}}\wedge\frac{1}{T-t},\label{zetaxxxxx}\\
		\left|\int_{-\infty}^0 \zeta_t (v_c-v)\,\dd x \right|&\lesssim
		\begin{cases}
			\frac{1}{T-t}(c^2+\abs{c})\\
			\left(\frac{1}{(T-t)^{\frac{1}{2}}}+ \frac{1}{(T-t)^{\frac{3}{4}}}\right)\abs{c}.\label{A}
		\end{cases}
	\end{align}
\end{proposition}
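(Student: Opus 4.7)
The plan is as follows. First, I would extend $\psi$ by odd reflection to $\tilde\psi$ on all of $\R$; the boundary condition $\zeta(t,0)=0$ forces $\psi(0)=0$, so the extension is continuous, and the bound $\norm{\tilde\psi_x}_\infty=\norm{\psi_x}_\infty\leq 1$ is preserved. Since the equation in \eqref{eq:zetaequa} involves only even-order spatial derivatives with constant coefficients, odd symmetry in $x$ is preserved by the flow; moreover, the boundary conditions $\zeta=\zeta_{xx}=0$ at $x=0$ are automatic for any odd function of $x$. Setting $s:=T-t$ and reversing time, the problem becomes the forward, dissipative Cauchy problem
\begin{equation*}
\zeta_s = G''(1)\zeta_{xx} - \zeta_{xxxx}, \qquad \zeta|_{s=0}=\tilde\psi,
\end{equation*}
whose solution is $\zeta(s,\cdot)=K_s\ast\tilde\psi$, where $K_s$ has Fourier symbol $\hat K_s(k)=\exp\bigl[-s(G''(1)k^2+k^4)\bigr]$.

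Next, standard Fourier and scaling arguments give the two-regime kernel bounds
\begin{equation*}
\norm{\partial_x^j K_s}_{L^1}\lesssim s^{-j/4}\wedge s^{-j/2},\qquad j\geq 0,
\end{equation*}
in which the fourth-order scaling $s^{-j/4}$ (high-frequency cutoff at $|k|\sim s^{-1/4}$) governs small $s$ and the second-order scaling $s^{-j/2}$ (cutoff at $|k|\sim s^{-1/2}$) governs large $s$. Since differentiation commutes with convolution, $\partial_x^{j+1}\zeta=(\partial_x^j K_s)\ast\tilde\psi_x$, and Young's inequality combined with $\norm{\tilde\psi_x}_\infty\leq 1$ yields \eqref{zetax}--\eqref{zetaxxxxx} at once.

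For \eqref{A}, the PDE gives $\zeta_t=-G''(1)\zeta_{xx}+\zeta_{xxxx}$, and the two cases correspond to complementary strategies. For the second bound, I would use the straightforward $L^\infty$--$L^1$ pairing
\begin{equation*}
\Bigl|\int_{-\infty}^0\zeta_t(v_c-v)\,\dd x\Bigr|\leq\norm{\zeta_t}_\infty\int|v_c-v|\,\dd x\lesssim\bigl(\norm{\zeta_{xx}}_\infty+\norm{\zeta_{xxxx}}_\infty\bigr)\,|c|,
\end{equation*}
where $\int|v_c-v|\,\dd x\lesssim|c|$ follows from $|v_c-v|\leq|c|\norm{v_x}_\infty$ together with the exponential decay of $v_x$. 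For the sharper first bound, I would exploit that $\zeta_t(\cdot,0)\equiv 0$ (since $\zeta(\cdot,0)\equiv 0$), so the mean value theorem gives $|\zeta_t(x)|\leq|x|\,\norm{\zeta_{xt}}_\infty$; writing $\zeta_{xt}=-G''(1)\zeta_{xxx}+\zeta_{xxxxx}$ and applying \eqref{zetaxxx} and \eqref{zetaxxxxx}, I would verify that $\norm{\zeta_{xt}}_\infty\lesssim(T-t)^{-1}$ uniformly, as the two terms always balance to give rate $(T-t)^{-1}$. Combined with the weighted estimate
\begin{equation*}
\int_{-\infty}^0|x|\,|v_c-v|\,\dd x\lesssim c^2+|c|
\end{equation*}
(the two contributions reflecting small-$|c|$ localization near the origin with integrated weight $|c|$, and the $O(|c|)$-wide spread that appears for larger $|c|$ contributing $c^2$), this yields the first bound.

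The main obstacle is twofold: deriving the sharp two-regime kernel bounds $\norm{\partial_x^j K_s}_1\lesssim s^{-j/4}\wedge s^{-j/2}$ requires a careful splitting into low- and high-frequency contributions in the Fourier representation; and recognizing that the improved $(T-t)^{-1}$ rate in the first case of \eqref{A} arises \emph{not} from a direct $L^\infty$ estimate on $\zeta_t$ (which only gives $(T-t)^{-1/2}$ at best) but from exploiting the Dirichlet-type vanishing $\zeta_t(\cdot,0)\equiv 0$ to extract a factor of $|x|$, which is then balanced against the weighted kink-difference integral.
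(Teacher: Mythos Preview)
Your proposal is correct and matches the paper's proof in all essential points: odd reflection to $\R$, convolution representation, Young's inequality with $L^1$ kernel bounds for \eqref{zetax}--\eqref{zetaxxxxx}, and for \eqref{A} the same dichotomy between a direct $L^\infty$--$L^1$ pairing and exploiting $\zeta_t(t,0)=0$ to gain the factor $|x|$ against the weighted integral $\int |x||v_c-v|\,\dd x\lesssim c^2+|c|$.

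One simplification in the paper is worth noting, since you flag the two-regime kernel bounds as the main obstacle. Rather than splitting into low and high frequencies, the paper factors the symbol $e^{-s(G''(1)k^2+k^4)}=e^{-sG''(1)k^2}\cdot e^{-sk^4}$ and writes $K_s=H^{(2)}(s,\cdot)\ast H^{(4)}(s,\cdot)$ as the convolution of the standard and biharmonic heat kernels. Then $\|\partial_x^j K_s\|_{L^1}\lesssim s^{-j/2}$ follows by placing all $j$ derivatives on $H^{(2)}$, and $\|\partial_x^j K_s\|_{L^1}\lesssim s^{-j/4}$ by placing them on $H^{(4)}$, using only $\|H^{(2)}\|_1=\|H^{(4)}\|_1=1$ and the self-similar scaling of each kernel. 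This bypasses any frequency analysis entirely.
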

For completeness we include a proof in Subsection~\ref{subs:parabolicM}.

\subsection{Nonlinear ingredients and proof of \eqref{eq:energydecayM}}
The heart of the idea is to control the energy in terms of the first moment $M$. For this we will make use of Lemma \ref{l:eed} together with the following Nash-type inequality.

\begin{lemma}[Nash's inequality]\label{l:realnash}
	There holds
	\begin{align}
		\E & \lesssim \D^{\frac{3}{5}}(M+1)^{\frac{4}{5}}.\label{al.1}
	\end{align}
\end{lemma}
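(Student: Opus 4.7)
The plan is to combine an integration-by-parts identity with a geometric bound that links the shift $c$ to the first moment $M$, and then close via Young's inequality.

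First I compute $\int f_c^2\,\dd x$ by integrating $f_c=F_c'$ by parts separately on $(-\infty,c)$ and on $(c,\infty)$. Since $f_c$ is continuous but $F_c$ has the jump $F_c(c^-)-F_c(c^+)=\int f_c\,\dd x=2c$ (using mass conservation together with the normalization $\int(u_0-v)\,\dd x=0$ and the elementary computation $\int(v-v_c)\,\dd x=2c$), the boundary contributions combine to
\[
  \int f_c^2\,\dd x \;=\; 2c\,f_c(c) \;-\; \int F_c\,f_{cx}\,\dd x.
\]
Set $K:=\|f_{cx}\|_{L^2}$, so $K^2\lesssim D$ by \eqref{eq:est_D}. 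One-dimensional Gagliardo--Nirenberg gives $\|f_c\|_\infty\lesssim\|f_c\|_{L^2}^{1/2}K^{1/2}$, and the analogous interpolation on each half-line for $F_c$ (which vanishes at one end) combined with $\|F_c\|_{L^2}^2\leq M\,\|F_c\|_\infty$ yields $\|F_c\|_{L^2}\lesssim M^{2/3}\|f_c\|_{L^2}^{1/3}$, so that $\bigl|\int F_c\,f_{cx}\,\dd x\bigr|\lesssim M^{2/3}\|f_c\|_{L^2}^{1/3}K$.

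The main nonlinear step is a geometric bound $c^2\lesssim M\,\|f_c\|_\infty$. Because $F_c$ has a jump of size $2|c|$ at $c$, at least one of $|F_c(c^\pm)|$ must be $\geq|c|$; on the corresponding half-line $F_c$ is Lipschitz with constant $\|f_c\|_\infty$, hence $|F_c|\geq|c|/2$ on an interval of length $|c|/(2\|f_c\|_\infty)$ adjacent to $c$, and integrating gives the claim. Combined with the interpolation on $\|f_c\|_\infty$, this yields $|2c\,f_c(c)|\lesssim M^{1/2}\|f_c\|_{L^2}^{3/4}K^{3/4}$. Substituting both bounds into the identity and applying Young's inequality to absorb fractional powers of $\|f_c\|_{L^2}$ into the left-hand side, I obtain $\|f_c\|_{L^2}^2\lesssim M^{4/5}K^{6/5}\lesssim M^{4/5}D^{3/5}$. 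Together with $K^2\lesssim D$ and the uniform bound $\E\lesssim 1$ from \eqref{eq:Edecreassing} (to handle the regime $D\geq 1$, where $\E\lesssim 1\leq D^{3/5}$), this packages as $\E\lesssim(M+1)^{4/5}D^{3/5}$.

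The main obstacle is the boundary term $2c\,f_c(c)$ produced by the jump: by scaling, $|c|$ cannot be controlled by $M$ alone, so a naive estimate on this term would spoil the exponent balance. The saving observation is that the jump of $F_c$ at $c$ is itself of size $2|c|$ and decays back to zero only on a length scale $\gtrsim|c|/\|f_c\|_\infty$; this gives exactly the quadratic lower bound $M\gtrsim c^2/\|f_c\|_\infty$ needed to render the boundary term subcritical relative to $M^{4/5}K^{6/5}$.
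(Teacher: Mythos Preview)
Your argument is correct, but it takes a genuinely different route from the paper's. The paper applies a half-line Gagliardo--Nirenberg interpolation directly to the primitive $F_c$: working on $(-\infty,c)$ (and symmetrically on $(c,\infty)$), it establishes
\[
\int_{-\infty}^c F_{cx}^2\,\dd x\;\lesssim\;\Bigl(\int_{-\infty}^c F_{cxx}^2\,\dd x\Bigr)^{3/5}\Bigl(\int_{-\infty}^c|F_c|\,\dd x\Bigr)^{4/5}
\]
via the classical intermediate-derivative estimate $\int F_{cx}^2\lesssim(\int F_{cxx}^2)^{1/2}(\int F_c^2)^{1/2}$ combined with $\sup|F_c|\lesssim(\int F_{cx}^2)^{1/3}(\int|F_c|)^{1/3}$. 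Because these interpolations are applied on each half-line separately, the jump of $F_c$ at $c$ never surfaces as a boundary term; the bound $\int f_c^2\lesssim D^{3/5}M^{4/5}$ follows in one step. For the gradient contribution the paper uses the interpolated form $\int f_{cx}^2\lesssim D^{3/5}\E^{2/5}\lesssim D^{3/5}$ (via \eqref{eq:Edecreassing}), which avoids your case split on $D\lessgtr 1$.

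Your approach instead integrates by parts across the jump and then must control the resulting boundary term $2c\,f_c(c)$. The geometric bound $c^2\lesssim M\|f_c\|_\infty$ that you isolate for this purpose is a nice observation in its own right; it is a variant of the paper's estimate $|c|\lesssim\E^{1/3}M^{1/3}$ from Lemma~\ref{l:Malg}, obtained there by the simpler route $|c|\le\sup|F_c|$. So your argument is more explicit about where the difficulty sits (the jump of $F_c$), at the cost of an auxiliary lemma and a Young-inequality closure; the paper's argument is shorter because the half-line interpolation absorbs the boundary contribution invisibly.
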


We will work on the finite time horizon $t\leq T$ for any fixed $T\in (0,\infty)$. It is convenient to abbreviate
\begin{align}\label{eq:defMb}
	\Mb:=\sup_{t\leq T}\M\vee 1.
\end{align}
Using Lemma \ref{l:realnash}, it is straightforward to deduce a bound on the energy gap in terms of $\Mb$.

\begin{corollary}\label{l:nash}
	The energy gap is bounded above by
	\begin{align}
		\E\lesssim \frac{\Mb^2}{t^{\frac{3}{2}}}\qquad \text{for }t\in[0,T].\label{nash1}
	\end{align}
\end{corollary}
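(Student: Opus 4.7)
The plan is to combine the Nash-type inequality from Lemma \ref{l:realnash} with the gradient flow identity $\dot{\E} = -D$ from \eqref{diff} and integrate a scalar ODE. This is the standard Nash argument adapted to our setting, and the proof should be short.

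First, I would use the definition \eqref{eq:defMb} of $\Mb$ and the fact that $\Mb \geq 1$ to rewrite Lemma \ref{l:realnash} as
\begin{align*}
  \E \lesssim D^{3/5} (M+1)^{4/5} \lesssim D^{3/5}\, \Mb^{4/5} \qquad \text{for } t \in [0,T].
\end{align*}
Raising this to the $5/3$ power and solving for $D$ gives the lower bound
\begin{align*}
  D \gtrsim \frac{\E^{5/3}}{\Mb^{4/3}}.
\end{align*}

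Next, inserting this into $\dot{\E} = -D$ yields the differential inequality
\begin{align*}
  \dot{\E} \lesssim -\frac{\E^{5/3}}{\Mb^{4/3}}.
\end{align*}
Since $\Mb$ is a (fixed) constant with respect to time on $[0,T]$, I would separate variables and integrate from $0$ to $t$, getting
\begin{align*}
  \E(t)^{-2/3} - \E_0^{-2/3} \gtrsim \frac{t}{\Mb^{4/3}}.
\end{align*}
Dropping the nonnegative term $\E_0^{-2/3}$ on the left and raising to the $-3/2$ power produces the desired bound
\begin{align*}
  \E(t) \lesssim \frac{\Mb^2}{t^{3/2}}.
\end{align*}

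There is no serious obstacle: the work is all contained in Lemma \ref{l:realnash} itself. The only mildly delicate point is that the argument uses $\Mb$ as a uniform-in-time bound on $M$ over $[0,T]$, which is exactly what the supremum in \eqref{eq:defMb} provides. The factor $(M+1)$ rather than $M$ in Lemma \ref{l:realnash} is precisely what makes it safe to absorb into $\Mb$ (using $\Mb \geq 1$), so the powers come out cleanly as $\Mb^{4/5}$ and ultimately $\Mb^2$ in the final estimate.
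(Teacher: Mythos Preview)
Your proof is correct and is exactly the argument the paper has in mind: the paper's proof is the one-line ``From \eqref{diff}, \eqref{eq:defMb}, \eqref{al.1}, and an integration in time, we obtain \eqref{nash1},'' and you have simply written out that integration in detail.
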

Hence it remains only to control $\Mb$.
Before turning to that task, we introduce two auxiliary results: Lemmas \ref{l:Malg} and \ref{l:dissint}.

\begin{lemma}[Additional algebraic relationships]\label{l:Malg}
%	For $c,\,D,\,\E,\, F_c,$ and $\M$ as given, there holds
	There holds
	\begin{align}
		\abs{c}&\lesssim \E^{\frac{1}{3}}\M^{\frac{1}{3}},\label{al.2}\\
		\abs{f_c(0)}&\lesssim (M+1)^{\frac{1}{6}}\D^{\frac{1}{2}}.\label{pw}
	\end{align}
\end{lemma}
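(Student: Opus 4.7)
The plan is to treat the two inequalities separately, since they have quite different flavors.

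For \eqref{al.2}, I would exploit a jump identity for $F_c$. The key observation is that, because $\int(u-v)\,dx=0$ is preserved by \eqref{ch} while $\int(v-v_c)\,dx=-2c$ (a formal computation using the oddness of $v$ from Assumption~\ref{ass:G}), one has $\int_\R f_c\,dx=2c$. Consequently the piecewise primitive $F_c$ of \eqref{fc} carries a jump of size $-2c$ at $x=c$, i.e., $F_c(c^+)-F_c(c^-)=-2c$. Splitting the integration at $c$ and integrating by parts on each of $(-\infty,c)$ and $(c,\infty)$ against a test function $\phi$ yields the identity
\[
2c\,\phi(c)=\int_\R F_c\,\phi'\,dx+\int_\R f_c\,\phi\,dx.
\]
Applying this with the rescaled bump $\phi(x)=\tilde\phi((x-c)/\lambda)$, where $\tilde\phi$ is a fixed Schwartz function with $\tilde\phi(0)=1$ and $\lambda>0$ is a free scale parameter (so $\phi(c)=1$), I would estimate
\[
|c|\lesssim \frac{M}{\lambda}+\lambda^{1/2}\,\E^{1/2}
\]
using $\|F_c\|_1=M$, $\|\phi'\|_\infty\lesssim\lambda^{-1}$, Cauchy--Schwarz, and $\|f_c\|_2\lesssim\E^{1/2}$ from \eqref{eq:scale_E}. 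Optimizing over $\lambda$ (balancing the two terms at $\lambda\sim(M\E^{-1/2})^{2/3}$) yields $|c|\lesssim \E^{1/3}M^{1/3}$.

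For \eqref{pw}, assuming WLOG that $c\leq 0$ so that $F_c$ is smooth on $(0,\infty)$, I would start from
\[
|f_c(0)|^2=-2\int_0^\infty f_c f_{cx}\,dx,
\]
which follows from $f_c(\infty)=0$. Integrating by parts using $f_c=F_c'$ on $(0,\infty)$ converts this into
\[
|f_c(0)|^2 = 2F_c(0)f_{cx}(0)+2\int_0^\infty F_c\,f_{cxx}\,dx,
\]
reducing matters to controlling $|F_c(0)|$, $|f_{cx}(0)|$, and $\int_0^\infty F_c f_{cxx}\,dx$ in terms of $M$ and $\D$. The pointwise bounds $\|f_{cx}\|_\infty,\|f_{cxx}\|_\infty \lesssim \D^{1/2}$ follow from the one-dimensional interpolation $\|g\|_\infty^2 \leq 2\|g\|_2\|g'\|_2$ combined with \eqref{eq:est_D}, while the boundary value is controlled through the analogous identity
\[
|F_c(0)|^2=-2\int_0^\infty F_cF_c'\,dx\leq 2M\|f_c\|_\infty.
\]
The remaining factor of $\|f_c\|_\infty$ (equivalently, of $\E^{1/2}$ via \eqref{linf}) is then traded for powers of $\D$ and $M+1$ via the Nash inequality \eqref{al.1}, while $\int_0^\infty F_c f_{cxx}\,dx$ is estimated by a combination of H\"older's inequality, a Gagliardo--Nirenberg bound for $\|F_c\|_2$ (via $\|F_c\|_2^2\leq \|F_c\|_1\|F_c\|_\infty$), and again \eqref{linf} together with \eqref{al.1}.

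The clean step is Part 1: a two-term optimization with a single scale parameter. The delicate part is the exponent balancing in Part 2, where several Gagliardo--Nirenberg interpolations must be composed with the Nash inequality in precisely the right order so that the resulting powers of $\D$ and $M+1$ land on $\tfrac12$ and $\tfrac16$; the $+1$ inside $(M+1)^{1/6}$ reflects the fact that the argument must remain valid in the small-$M$ regime, where the purely GN-type estimate $\|f_c\|_\infty^2\lesssim \E^{1/2}\D^{1/2}$ already suffices.
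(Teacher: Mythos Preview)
Your argument for \eqref{al.2} is correct and essentially equivalent to the paper's: both start from the identity $2c = F_c(c^-)-F_c(c^+)$, and your test-function optimization in $\lambda$ is the dynamic version of the static interpolation inequality \eqref{easy2} that the paper invokes to bound $\sup|F_c|$ (and hence $|c|$) by $\E^{1/3}M^{1/3}$.

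Your proposal for \eqref{pw}, however, has a genuine gap. The route via $|f_c(0)|^2=2F_c(0)f_{cx}(0)+2\int_0^\infty F_c f_{cxx}$ together with Gagliardo--Nirenberg and Nash cannot produce the exponent $D^{1/2}$. A scaling check makes this transparent: using only $\int f_c^2\lesssim\E$, $\int f_{cx}^2\lesssim D$, $\int|F_c|=M$, and the Nash relation $\E\lesssim D^{3/5}(M+1)^{4/5}$, the sharpest possible pointwise bound is
\[
|f_c(0)|\le\|f_c\|_\infty\lesssim(\E D)^{1/4}\lesssim D^{2/5}(M+1)^{1/5},
\]
and no rearrangement of your integration-by-parts terms can beat this (they all reduce to combinations of the same norms). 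The exponent $D^{2/5}$ is strictly weaker than $D^{1/2}$ for small $D$, and it is precisely the borderline value $\gamma=2/5$ excluded in Lemma~\ref{l:dissint}, so it would not close the duality argument in Proposition~\ref{l:dualM}.

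What you are missing is a \emph{nonlinear} piece of information that is not visible at the level of norms: the Euler--Lagrange condition \eqref{eq:ELvc} for the $L^2$-closest kink forces $|f_c(c)|\lesssim D^{1/2}$ (this is \eqref{fcD}, proved in \cite{OW}). The paper then simply writes
\[
|f_c(0)|\le|f_c(0)-f_c(c)|+|f_c(c)|\lesssim\bigl(|c|\,D\bigr)^{1/2}+D^{1/2}\lesssim(|c|+1)^{1/2}D^{1/2},
\]
and feeds in $|c|\lesssim M^{1/3}$ from \eqref{al.2} together with $\E\lesssim 1$. The anchoring at $x=c$ is what upgrades the dissipation exponent from $2/5$ to $1/2$; without it your interpolation scheme cannot land on the claimed powers.
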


We use Lemma \ref{l:Malg} together with \eqref{eq:Edecreassing} and Corollary \ref{l:nash} to control the shift.

\begin{corollary}\label{c:estimatesM}
	There holds
	\begin{align}
		\abs{c}\lesssim \min\left\{\Mb^{\frac{1}{3}},\frac{\Mb}{t^{\frac{1}{2}}}\right\}. \label{folg}
	\end{align}
\end{corollary}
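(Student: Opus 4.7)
The plan is to simply combine the algebraic inequality \eqref{al.2} from Lemma \ref{l:Malg}, namely $|c| \lesssim \E^{1/3} M^{1/3}$, with the two available bounds on $\E$: the monotone decay bound \eqref{eq:Edecreassing}, which gives $\E \lesssim 1$, and the Nash-type decay bound \eqref{nash1}, which gives $\E \lesssim \Mb^{2}/t^{3/2}$ on $[0,T]$. Since $M \leq \Mb$ by the definition \eqref{eq:defMb}, substituting either bound on $\E$ into \eqref{al.2} yields one of the two terms in the minimum.

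More concretely, I would write
\begin{align*}
\abs{c} \lesssim \E^{\frac{1}{3}} M^{\frac{1}{3}} \lesssim \E^{\frac{1}{3}} \Mb^{\frac{1}{3}}.
\end{align*}
Applying $\E \lesssim 1$ gives $|c| \lesssim \Mb^{1/3}$. Applying instead $\E \lesssim \Mb^{2}/t^{3/2}$ gives
\begin{align*}
\abs{c} \lesssim \left(\frac{\Mb^{2}}{t^{3/2}}\right)^{\frac{1}{3}} \Mb^{\frac{1}{3}} = \frac{\Mb^{2/3}}{t^{1/2}} \cdot \Mb^{\frac{1}{3}} = \frac{\Mb}{t^{\frac{1}{2}}}.
\end{align*}
Taking the minimum of the two bounds produces exactly \eqref{folg}.

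There is no genuine obstacle here; the corollary is a two-line consequence of previously established facts, and its role in the paper is to package the shift estimate into a clean form for later use in bounding $\Mb$ via the duality argument.
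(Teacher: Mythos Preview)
Your proof is correct and matches the paper's approach exactly: the paper states just before this corollary that it follows from Lemma~\ref{l:Malg} together with \eqref{eq:Edecreassing} and Corollary~\ref{l:nash}, and does not even spell out a separate proof in Subsection~\ref{ss:pflemmaM}. Your two-line derivation is precisely what is intended.
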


Our second auxiliary ingredient is the following.
\begin{lemma}[Integral dissipation bound]\label{l:dissint}
	For any $\frac{2}{5}<\exponent\leq 1$, there holds
	\begin{align}
		\int_0^T \D^\exponent\,\dd t\lesssim \Mb^{\frac{4(1-\exponent)}{3}}.\label{dissint}
	\end{align}
\end{lemma}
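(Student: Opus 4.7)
The plan is to combine Nash's inequality (Lemma \ref{l:realnash}) with the basic gradient-flow identity $\dot{\energygap}=-\dissipation$ via a change-of-variables (or equivalently, exact-derivative) argument in $\energygap$. The starting observation is to rewrite
\begin{align*}
\int_0^T \dissipation^\exponent\,\dd t
= \int_0^T \dissipation^{\exponent-1}\,(-\dot{\energygap})\,\dd t,
\end{align*}
which converts the $\dissipation$-integral into a weighted integral of $-\dot{\energygap}$ that can be treated as a derivative in $\energygap$.

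The next step is to bound $\dissipation^{\exponent-1}$ from above. By Nash's inequality and the definition of $\Mb$ in \eqref{eq:defMb}, we have $\energygap\lesssim \dissipation^{3/5}\Mb^{4/5}$, which rearranges to the lower bound $\dissipation\gtrsim \energygap^{5/3}\Mb^{-4/3}$. Since $\exponent-1\leq 0$, raising to this negative power reverses the inequality and yields
\begin{align*}
\dissipation^{\exponent-1}\lesssim \energygap^{\frac{5(\exponent-1)}{3}}\,\Mb^{\frac{4(1-\exponent)}{3}}.
\end{align*}
Substituting this into the integral and factoring out $\Mb^{4(1-\exponent)/3}$ leaves us with $\int_0^T \energygap^{5(\exponent-1)/3}(-\dot{\energygap})\,\dd t$, which, since $(5\exponent-2)/3>0$ precisely when $\exponent>2/5$, is an exact time-derivative of $\frac{3}{5\exponent-2}\energygap^{(5\exponent-2)/3}$ (up to a sign). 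Integrating from $0$ to $T$, discarding the non-positive boundary term $-\energygap(T)^{(5\exponent-2)/3}$, and using $\energygap_0\lesssim 1$ from \eqref{eq:Edecreassing}, we obtain
\begin{align*}
\int_0^T \dissipation^\exponent\,\dd t
\lesssim \Mb^{\frac{4(1-\exponent)}{3}}\,\energygap_0^{\frac{5\exponent-2}{3}}
\lesssim \Mb^{\frac{4(1-\exponent)}{3}}.
\end{align*}
The endpoint $\exponent=1$ is even simpler: the integral reduces to $\energygap_0-\energygap(T)\leq \energygap_0\lesssim 1$, matching the claim with $\Mb^0$.

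There is no serious obstacle here; the only point requiring attention is the interplay between the exponent in Nash's inequality and the constraint $\exponent>2/5$. This threshold is not an artefact but corresponds exactly to the integrability of $\energygap^{5(\exponent-1)/3}$ at $\energygap=0$ after the change of variables $u=\energygap(t)$, i.e.\ it is dictated by the scaling of Nash's inequality itself. Since Nash is applied via a pointwise in time estimate, no information about the time profile of $\energygap$ is used beyond monotonicity, which is why the bound depends only on $\Mb$ (not on $\Ez$) in the natural power.
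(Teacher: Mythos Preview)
Your proof is correct and takes a genuinely different, more direct route than the paper's. The paper first passes through Corollary~\ref{l:nash} (the time-decay $\E\lesssim \Mb^2 t^{-3/2}$), then splits $[0,T]=[0,\tau]\cup[\tau,T]$, applies H\"older on each piece (using $\int_0^\tau D\,\dd t\lesssim\E_0$ on the short interval and the weighted identity $t^\theta D=\frac{\dd}{\dd t}(-t^\theta\E)+\theta t^{\theta-1}\E$ on the long one), and optimizes in $\tau$. You instead use Nash's inequality \emph{pointwise} to convert $D^{\gamma-1}$ into a power of $\E$, so that $D^\gamma\,\dd t$ becomes (up to a constant) an exact differential in $\E$; the threshold $\gamma>\frac{2}{5}$ then drops out as the integrability condition for $\E^{(5\gamma-5)/3}$ near $\E=0$.

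Your argument is shorter and avoids both the intermediate time-decay corollary and the optimization step; it also makes transparent why the exponent $\frac{4(1-\gamma)}{3}$ on $\Mb$ is exactly right (it is forced by the Nash scaling). The paper's approach has the minor advantage of being more modular---once the energy decay $\E\lesssim\Mb^2 t^{-3/2}$ is in hand, any time-weighted H\"older estimate can be read off---but for this particular lemma your change-of-variables trick is the cleaner proof. One cosmetic point: the implicit constant $\frac{3}{5\gamma-2}$ blows up as $\gamma\downarrow\frac{2}{5}$, but this is also the case in the paper's proof (the admissible range for $\theta$ collapses), and the statement only claims the bound for fixed $\gamma\in(\frac{2}{5},1]$.
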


We are now ready to control $\Mb$.
\begin{proposition}[Duality argument]\label{l:dualM}
	For any $T\in(0,\infty)$, the first moment remains in an order one neighborhood of its initial value
	\begin{align*}
		\Mb\lesssim \M_0+1.
	\end{align*}
\end{proposition}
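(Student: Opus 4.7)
The plan is to deploy a duality argument in the spirit of Niethammer--Vel\'azquez \cite{NV}: pair $F_c(T)$ against the backward linearized flow $\zeta$ from Proposition \ref{paraboliczeta}, express the result as an initial contribution controlled by $M_0$ plus nonlinear and shift-type corrections, and bound those corrections by strictly sub-linear powers of $\Mb$ so as to close a bootstrap. By $L^{1}$--$L^{\infty}$ duality and symmetry of the problem, it suffices to bound
\begin{equation*}
    \sup_{\|\phi\|_\infty\le 1,\;\sppt\phi\subset(-\infty,0]}\int_{-\infty}^0 F_c(T)\,\phi\,dx.
\end{equation*}
For such a $\phi$, set $\psi(x):=-\int_x^0 \phi(y)\,dy$, so that $\psi_x=\phi$, $\psi(0)=0$, and $\|\psi_x\|_\infty\le 1$; integrating by parts reduces matters---up to a cheap $O(|c|)$-correction due to the mismatch between the natural cutoff at $c$ and our frozen cutoff at $0$, which is controlled by \eqref{folg}---to estimating $\int_{-\infty}^0 f(T)\,\psi\,dx$.

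Let $\zeta$ solve the backward problem \eqref{eq:zetaequa} on $(-\infty,0)\times(0,T)$ with terminal datum $\psi$. Differentiating $t\mapsto\int_{-\infty}^0 f\,\zeta\,dx$ using the PDE \eqref{eq:ft} for $f$ and \eqref{eq:zetaequa} for $\zeta$, and integrating by parts in $x$ via $\zeta(\cdot,0)=\zeta_{xx}(\cdot,0)=0$, one obtains the fundamental identity
\begin{equation*}
    \int_{-\infty}^0 f(T)\,\psi\,dx \;=\; \int_{-\infty}^0 f(0)\,\zeta(0,\cdot)\,dx + \int_0^T\!\!\int_{-\infty}^0 \bigl[G'(u)-G'(v)-G''(1)\,f\bigr]\,\zeta_{xx}\,dx\,dt + (\text{boundary and shift corrections}),
\end{equation*}
where the corrections collect (i) boundary contributions at $x=0$ of the form $\mu(t,0)\,\zeta_x(t,0)$ (with $\mu:=G'(u)-u_{xx}$) and $f(t,0)\,\zeta_{xxx}(t,0)$, controlled via \eqref{zetax}, \eqref{zetaxxx}, \eqref{linf} and \eqref{pw}, and (ii) the explicit shift term $\int_0^T\!\!\int_{-\infty}^0 \zeta_t\,(v_c-v)\,dx\,dt$, handled directly by \eqref{A}. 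The initial contribution is bounded by a final integration by parts: $|\int_{-\infty}^0 f(0)\,\zeta(0,\cdot)\,dx|\lesssim M_0\,\|\zeta_x(0,\cdot)\|_\infty\lesssim M_0$ thanks to \eqref{zetax}.

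The nonlinear remainder is the heart of the argument. Taylor expansion yields $G'(u)-G'(v)-G''(1)\,f = [G''(v)-G''(1)]\,f + O(f^2)$; since $G$ is even, the bracketed factor decays exponentially at $-\infty$ and hence lies in $L^2(-\infty,0)$. Combining H\"older in $x$, the pointwise decay \eqref{zetaxx} of $\|\zeta_{xx}\|_\infty$, and Lemma \ref{l:realnash} in the form $\E^{1/2}\lesssim\D^{3/10}\Mb^{2/5}$, we estimate in time by H\"older with exponent $\gamma\in(\tfrac{2}{5},\tfrac{3}{5})$ chosen so that both the singularity of $\|\zeta_{xx}\|_\infty$ as $t\to T$ is integrable and the resulting bound is uniform in $T$. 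Lemma \ref{l:dissint} then produces a bound of the form $\Mb^{2/(5\gamma)}$, a strictly sub-linear power of $\Mb$. The quadratic remainder and the boundary/shift corrections---with $|c|$ in time estimated by \eqref{folg} from Corollary \ref{c:estimatesM}---are handled by analogous (but distinct) H\"older bookkeeping. Combining everything yields $M(T)\lesssim M_0+1+\Mb^{\mu}$ for some $\mu<1$, uniformly in $T$; taking the supremum over $T$ and absorbing $\Mb^\mu$ into $\Mb$ via Young's inequality gives the desired $\Mb\lesssim M_0+1$. The main technical obstacle is precisely this balancing act for $\gamma$: the constraint $\gamma>2/5$ in Lemma \ref{l:dissint} is essentially tight, reflecting a delicate interplay between the semigroup time-singularity in Proposition \ref{paraboliczeta} and the dissipation integral bound.
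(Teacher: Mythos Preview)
Your overall architecture---pair $f$ against the backward solution $\zeta$, integrate in time, split into initial data plus bulk, boundary, and shift corrections---is exactly the paper's. The initial term, the $\zeta_x$ boundary terms, and the reduction to $\widetilde M$ all go through as you indicate. The gap is in your treatment of the bulk $\zeta_{xx}$ terms (and, by the same mechanism, the $\zeta_{xxx}$ boundary term and the shift term).

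You claim one can choose $\gamma\in(\tfrac25,\tfrac35)$ so that, after H\"older, the resulting bound is both integrable at $t\to T$ and uniform in $T$. This is not possible. Writing $\E^{1/2}\lesssim D^{3/10}\Mb^{2/5}$ and applying H\"older to $\int_0^T\|\zeta_{xx}\|_\infty D^{3/10}\,dt$ with $(\int D^\gamma)^{3/(10\gamma)}$ gives the dual exponent $r=10\gamma/(10\gamma-3)$ on $\|\zeta_{xx}\|_\infty$. For $\gamma\in(\tfrac25,\tfrac35)$ one has $r\in(2,4)$, so the terminal singularity $\int_{T-1}^T(T-t)^{-r/2}\,dt$ diverges; for $\gamma>\tfrac35$ one has $r<2$, the terminal layer is fine, but then $\bigl(\int_0^{T-1}(T-t)^{-r/4}\,dt\bigr)^{1/r}\sim T^{1/r-1/4}$ grows without bound in $T$. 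There is no choice of $\gamma$ reconciling both requirements, so your buckling inequality $M(T)\lesssim M_0+1+\Mb^\mu$ does not hold uniformly in $T$.

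The paper's remedy is not to insist on sub-linear powers of $\Mb$ for these terms. Instead one uses the \emph{decay} $\E^{1/2}\lesssim \Mb\,t^{-3/4}$ from Corollary~\ref{l:nash} (not just the Nash inequality itself) to get, for instance,
\[
\int_0^T\|\zeta_{xx}\|_\infty\,\E^{1/2}\,dt\;\lesssim\;\Mb\int_0^T\frac{1}{(T-t)^{1/2}}\,\frac{1}{t^{3/4}}\,dt\;\lesssim\;\frac{\Mb}{T^{1/4}},
\]
a \emph{linear} term in $\Mb$ but with a prefactor that vanishes as $T\to\infty$. The argument then splits into two regimes: for $T$ large the linear term carries the small factor $\ln T/T^{1/4}$ and is absorbed into the left-hand side, while for order-one $T$ all corrections are bounded by a constant (using the cruder bounds $\E\lesssim 1$, $|c|\lesssim\Mb^{1/3}$). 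The genuinely sub-linear contribution $\Mb^{5/6}$ comes only from the $\zeta_x$ boundary terms via \eqref{pw} and Lemma~\ref{l:dissint}, exactly as you describe. In short: the missing idea is the large-$T$ versus small-$T$ dichotomy, made possible by trading the pointwise Nash bound for the integrated energy decay of Corollary~\ref{l:nash}.
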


These ingredients suffice to prove energy decay in the case of bounded first moment.

\begin{proof}[Proof of Theorem \ref{t:mainresult}, case $(i)$]
	Proposition \ref{l:dualM} establishes \eqref{slowdown1}. The estimates in \eqref{eq:energydecayM} follow from a combination of \eqref{eq:Edecreassing}, Corollary \ref{l:nash}, Corollary \ref{c:estimatesM}, and Proposition \ref{l:dualM}.
\end{proof}

\subsection{Proofs of auxiliary results}\label{ss:pflemmaM}

\begin{proof}[Proof of Lemma \ref{l:realnash}]
We appeal to \eqref{eq:scale_E} in the form
\begin{align*}
  \E\lesssim\left(\int_{-\infty}^c+\int_c^\infty\right)(F_{cx}^2+F_{cxx}^2)\, \dd x,
\end{align*}
cf.\ \eqref{fc}. For the first contribution, we invoke the elementary interpolation estimate
	\begin{align}
		\int_{-\infty}^c F_{cx}^2\,\dd x&\lesssim \left(\int_{-\infty}^cF_{cxx}^2\,\dd x\right)^{\frac{3}{5}}\left(\int_{-\infty}^c\abs{F_c}\,\dd x\right)^{\frac{4}{5}},\label{easy1}
	\end{align}
and the corresponding estimate on $(c,\infty)$, which together with \eqref{eq:est_D} and \eqref{eq:definitionM} yield
\begin{align*}
    	\left(\int_{-\infty}^c+\int_c^\infty\right) F_{cx}^{2}\,\dd x
		\lesssim D^{\frac{3}{5}}\M^{\frac{4}{5}}.
	\end{align*}
For completeness we remark that \eqref{easy1} follows from
the classical interpolation estimate
	\begin{align}
		\int_{-\infty}^c F_{cx}^2\,\dd x&\lesssim \left(\int_{-\infty}^cF_{cxx}^2\,\dd x\right)^{\frac{1}{2}} \left(\int_{-\infty}^cF_c^2\,\dd x\right)^{\frac{1}{2}}\label{int}
	\end{align}
and the estimate
	\begin{align}
		\sup_{(-\infty,c)}\abs{F_c}&\lesssim \left( \int_{-\infty}^cF_{cx}^2\,\dd x\right)^{\frac{1}{3}}\left(\int_{-\infty}^c\abs{F_c}\,\dd x\right)^{\frac{1}{3}}.\label{easy2}
	\end{align}
Estimate \eqref{easy2} in turn follows from the elementary facts
	\begin{align*}
		\sup_{(-\infty,c)}\abs{F_{c}}^{2}\lesssim \left(\int_{-\infty}^{c}F_{cx}^{2}\,\dd x\int_{-\infty}^{c}F_{c}^{2}\,\dd x\right)^{\frac{1}{2}}\quad\text{and}\quad
		\int_{-\infty}^{c}F_{c}^{2}\,\dd x\leq \sup_{(-\infty,c)}\abs{F_{c}}\int_{-\infty}^{c}\abs{F_{c}}\,\dd x.
\end{align*}

For the second contribution to $\E$, we employ the simplistic estimate
%	\begin{align*}
%		\E\leq\E_{0}\lesssim 1\lesssim \Mb
%	\end{align*}
	\begin{align*}
    \left(\int_{-\infty}^c+\int_c^\infty\right)	 F_{cxx}^{2}\,\dd x\overset{\eqref{eq:scale_E}, \eqref{eq:est_D} }\lesssim D^{\frac{3}{5}}\E^{\frac{2}{5}}
		\overset{\eqref{eq:Edecreassing}}\lesssim D^{\frac{3}{5}}
		\leq D^{\frac{3}{5}}(M+1)^{\frac{4}{5}}.
	\end{align*}

%For completeness, we remind the reader that \eqref{int} for an $H^2$ function $g$ follows from the additive estimate
%	\begin{align}
%		\int_{-\infty}^c g_x^2\,\dd x&\lesssim\int_{-\infty}^cg_{xx}^2\,\dd x+\int_{-\infty}^cg^2\,\dd x\label{add}
%	\end{align}
%	via scaling, and \eqref{add} follows via an integration by parts and Young's inequality to control the boundary term.
%$L^\infty$ bound on $\abs{gg_x}$, integration by parts, and Young's inequality.
\end{proof}

\begin{proof}[Proof of Corollary \ref{l:nash}]
	From \eqref{diff}, \eqref{eq:defMb}, \eqref{al.1}, and an integration in time, we obtain \eqref{nash1}.
\end{proof}

\begin{proof}[Proof of Lemma \ref{l:Malg}]
Notice that \eqref{diff} and the Cauchy Schwarz inequality implies that $u-u_0$ is in $\dot{H}^{-1}$ for any finite time and hence, that  $\int u-u_0\,\dd x=0$. Because of the condition $\int u_0-v\,\dd x=0$, there holds  $\int u-v\,\dd x=0$, which we use to calculate
%Notice that \eqref{easy1} allows us to deduce finiteness of $\norm{u_0-v}_{\dot{H}^{-1}}$ and hence, according to \eqref{diff} and Proposition \ref{l:dualM}, of $\norm{u-v}_{\dot{H}^{-1}}$. From here we deduce $\int u-v\,\dd x=0$, which we use to calculate
	\begin{align}\notag
		2c=\int v-v_c\,\dd x=\int u-v+v-v_c\,\dd x\overset{\eqref{fc}}=\int f_c\,\dd x\overset{\eqref{fc}}=F_c\left(c_-\right)-F_c\left(c_+\right),
	\end{align}
so that $\abs{c}\leq  \sup \abs{F_c}$. This fact together with \eqref{easy2}, \eqref{eq:scale_E}, and \eqref{eq:definitionM} implies \eqref{al.2}.

	For \eqref{pw} we use \eqref{eq:ELvc} and argue as in \cite[proof of Lemma 2.1]{OW} to obtain
\begin{align}
\abs{f_{c}(c)}\lesssim \D^{\frac{1}{2}}.\label{fcD}
\end{align}
We then use this bound to estimate
	\begin{eqnarray}
		\abs{f_{c}(0)}&\leq& \abs{f_{c}(0)-f_{c}(c)}+\abs{f_{c}(c)}\leq \left(\abs{c}\int f_{cx}^2\,\dd x\right)^\frac{1}{2}+\abs{f_{c}(c)}\notag\\
		&\overset{\eqref{eq:est_D}}\lesssim &\bigl((\abs{c}+1)D\bigr)^{\frac{1}{2}}
		\overset{\eqref{al.2},\eqref{eq:Edecreassing} }\lesssim (M+1)^{\frac{1}{6}}D^{\frac{1}{2}}.\label{fcsmall}
	\end{eqnarray}
\end{proof}

\begin{proof}[Proof of Lemma \ref{l:dissint}]
	Pick a  $\exponentt\in \left(\frac{1}{\exponent}-1,\frac{3}{2}\right)$ and let $\tau\in(0,T)$, to be optimized later.
	On the one hand, we  note that using
	\begin{align*}
		t^{\exponentt}\D\overset{\eqref{diff}}=\frac{\dd}{\dd t}\left(-t^{\exponentt}\E\right)+\exponentt t^{\exponentt-1}\E
	\end{align*}
	there holds
	\begin{align*}
		\MoveEqLeft
		\int_{\tau}^{T}\D^{\exponent}\,\dd t\leq \left(\int_{\tau}^{T}\frac{1}{t^{\frac{\exponent\exponentt}{1-\exponent}}}\,\dd t\right)^{1-\exponent}\left(\int_{\tau}^{T}t^{\exponentt}\D\,\dd t\right)^{\exponent}
		\lesssim \frac{1}{\tau^{\exponent\exponentt-(1-\exponent)}}\left(\tau^{\exponentt}\E(\tau)+\int_{\tau}^{T}t^{\exponentt-1}\E\, \dd t\right)^{\exponent}\\
		&\overset{\eqref{nash1}}\lesssim \frac{1}{\tau^{\exponent\exponentt-(1-\exponent)}}\left(\tau^{\exponentt}\frac{\Mb^{2}}{\tau^{\frac{3}{2}}}+\Mb^{2}\int_{\tau}^{T}t^{\exponentt-\frac{5}{2}}\, \dd t\right)^{\exponent}
		\lesssim \frac{1}{\tau^{\exponent\exponentt-(1-\exponent)}}\left(\frac{\Mb^{2}}{\tau^{\frac{3}{2}-\exponentt}}\right)^{\exponent}
		=\frac{\Mb^{2\exponent}}{\tau^{\frac{5\exponent}{2}-1}}.
	\end{align*}
	On the other hand, we have the estimate
	\begin{align}
		\int_{0}^{\tau}\D^{\exponent}\,\dd t\leq \tau^{1-\exponent}\left(\int_{0}^{\tau}\D\,\dd t\right)^{\exponent}\overset{\eqref{diff}}\lesssim \tau^{1-\exponent}\E_{0}^{\exponent}\overset{\eqref{eq:epsilon}}\lesssim \tau^{1-\exponent}.\label{otherhand}
	\end{align}
	Combining the previous two estimates yields
	\begin{align*}
		\int_{0}^{T}\D^{\exponent}\,\dd t\lesssim  \tau^{1-\exponent}+\frac{\Mb^{2\exponent}}{\tau^{\frac{5\exponent}{2}-1}}.
	\end{align*}
	Optimizing in $\tau$ establishes \eqref{dissint}. (Notice that in the event the optimal $\tau$ exceeds $T$, using \eqref{otherhand} on all of $(0,T)$ suffices.)
\end{proof}

\begin{proof}[Proof of Proposition \ref{l:dualM}]
We begin by introducing
\begin{align*}
  F(x):=
    \int_{-\infty}^x f(y)\,\dd y.
\end{align*}
Using $F$, we define the following simpler stand-in for $\M$ (cf.\ \eqref{eq:definitionM}):
\begin{align*}
   \widetilde{M}:=\int_{-\infty}^\infty\abs*{F}\,\dd x.
\end{align*}
%which is helpful for comparison with $F_c$.
That $\Mt$ is a suitable stand-in for $M$ is contained in the following lemma.
\begin{lemma}[Link between $\M$ and $\Mt$]\label{l:mm}
  There holds
  \begin{align*}
		\Mt\lesssim \M+1\quad\text{and}\quad \M\lesssim \Mt+1.
  \end{align*}
\end{lemma}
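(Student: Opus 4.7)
The plan is to use the triangle inequality $\abs{\Mt-\M}\leq \int\abs{F-F_c}\,\dd x$ to reduce the task to bounding $\int\abs{F-F_c}\,\dd x$ by $c^2+1$, and then to absorb the resulting $c^2$-term using the a priori bound $\abs{c}\lesssim \M^{1/3}$ that comes from \eqref{al.2} together with $\E\lesssim 1$. Set $g(x):=v_c(x)-v(x)$ and $G(x):=\int_{-\infty}^x g(y)\,\dd y$, so that $G(-\infty)=0$ and $G(\infty)=-\int(v-v_c)\,\dd y=-2c$. Since $\int f\,\dd x=0$ (the normalization $\int(u_0-v)\,\dd x=0$ is preserved by the $\dot H^{-1}$ gradient flow via \eqref{diff} and Cauchy--Schwarz), a direct unfolding of the definitions \eqref{fc} gives
\begin{equation*}
F(x)-F_c(x)=\begin{cases}G(x),& x<c,\\ G(x)+2c,& x>c.\end{cases}
\end{equation*}

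The main step is to show $\int\abs{F-F_c}\,\dd x\lesssim c^2+1$. Since $v$ and $v_c$ converge exponentially to $\pm 1$ at $\pm\infty$, the function $g$ is bounded pointwise and decays exponentially outside a transition region of length $\lesssim\abs{c}+1$; in particular $\norm{g}_{L^1}\lesssim\abs{c}+1$. Using $\abs{G(x)}\leq \int_{-\infty}^x\abs{g}\,\dd y$ for $x<c$ and $\abs{G(x)+2c}=\abs{\int_x^{\infty}g\,\dd y}\leq\int_x^{\infty}\abs{g}\,\dd y$ for $x>c$, Fubini gives
\begin{equation*}
\int_{-\infty}^{c}\abs{G(x)}\,\dd x\leq \int_{-\infty}^{c}\abs{g(y)}(c-y)\,\dd y\quad\text{and}\quad \int_c^{\infty}\abs{G(x)+2c}\,\dd x\leq \int_c^{\infty}\abs{g(y)}(y-c)\,\dd y.
\end{equation*}
In each integral, the weight $\abs{c-y}$ is $O(\abs{c}+1)$ on the transition region of $g$ (which contributes $\lesssim(\abs{c}+1)\norm{g}_{L^1}\lesssim c^2+1$), while the exponential tails of $g$ contribute only $O(1)$. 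Summing the two parts yields $\int\abs{F-F_c}\,\dd x\lesssim c^2+1$ and hence $\abs{\Mt-\M}\lesssim c^2+1$.

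To finish, \eqref{al.2} together with $\E\lesssim 1$ from \eqref{eq:Edecreassing} gives $\abs{c}\lesssim \M^{1/3}$, so $c^2\lesssim \M^{2/3}$. Young's inequality furnishes $c^2\leq \tfrac{1}{2}\M+C$, whence
\begin{equation*}
\Mt\leq \M+C(c^2+1)\lesssim \M+1\qquad\text{and}\qquad \M\leq \Mt+C(c^2+1)\leq \Mt+\tfrac{1}{2}\M+C',
\end{equation*}
and absorbing the $\M$-term in the second inequality into the left-hand side yields $\M\lesssim \Mt+1$. The main (mild) obstacle is the Fubini-plus-exponential-decay computation, which is delicate only in that the size of the transition region of $g$ scales with $\abs{c}$; otherwise the argument is routine.
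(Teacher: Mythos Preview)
Your proof is correct and follows essentially the same route as the paper: both arguments reduce to showing $\abs{\Mt-\M}\leq \int\abs{F-F_c}\,\dd x\lesssim c^2+1$, then invoke $\abs{c}\lesssim \M^{1/3}$ (from \eqref{al.2} and $\E\lesssim 1$) together with Young's inequality to close. The paper reaches $\int\abs{F-F_c}\,\dd x\lesssim c^2+1$ via a direct pointwise bound $\abs{F_c-F}\lesssim\abs{c}$ on $\{\abs{x}\lesssim\abs{c}\}$ and integrable decay outside, whereas you pass through Fubini and the weighted $L^1$-norm $\int\abs{v_c-v}\,\abs{c-y}\,\dd y$; these are equivalent bookkeeping for the same estimate.

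One minor imprecision: your claim that ``the exponential tails of $g$ contribute only $O(1)$'' is not quite right for the left tail in $\int_{-\infty}^c\abs{g(y)}(c-y)\,\dd y$, since there the weight $c-y$ is of order $\abs{c}+\abs{y}$ and the tail contributes $O(\abs{c}+1)$ rather than $O(1)$. This does not affect the conclusion, as $\abs{c}+1\lesssim c^2+1$.
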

\begin{proof}[Proof of Lemma \ref{l:mm}]
We remark that because of $\int f(y)\,\dd y=0$, we have $F(x)=-\int_x^\infty f(y)\,\dd y$
for all $x\in\R$.
Using the identity $f_c-f=v-v_c$ (cf.\ \eqref{fc}), we estimate
	\begin{align*}
		\abs*{F_c(x)-F(x)}\lesssim
		\begin{cases}
\abs{c} &\text{for }\abs{x}\leq 2 \abs{c}\\
			\frac{1}{x^2}&\text{for }\abs{x}\geq 2 \abs{c},
					\end{cases}
	\end{align*}
where there is nothing special about $\frac{1}{x^2}$ except that it is integrable at infinity.
	We deduce
	\begin{align*}
		\abs*{\M-\Mt}&\leq \int_{-\infty}^\infty \abs{F_c-F}\,\dd x\lesssim c^2+1.
	\end{align*}
	Invoking \eqref{eq:Edecreassing} and \eqref{al.2} yields
	\begin{align}
		\abs*{\M-\Mt}\lesssim \M^{\frac{2}{3}}+1.\label{ded.1}
	\end{align}
	On the one hand, Young's inequality yields
	$
		\Mt\lesssim \M+1.
	$
	On the other hand we deduce from \eqref{ded.1} that
	$
		\M\lesssim \Mt+ \M^{\frac{2}{3}}+1,
	$
	so that another application of Young's inequality gives
	$
		\M\lesssim \Mt+1.
	$
\end{proof}%l:mm

We will use a duality argument for $\widetilde{M}$ to establish uniform control on $M$.
According to Lemma \ref{l:mm}, it suffices to show
\begin{align*}
		\sup_{t\leq T}\Mt\lesssim \Mt(0)+1.
	\end{align*}
	We will make use of the dual formulation
	\begin{align*}
		\Mt=\sup\left\{\int f\psi\,\dd x\colon \abs{\psi_x}\leq 1\text{ and }\psi(0)=0  \right\},
	\end{align*}
	which follows from
	\begin{align*}
		\int f\psi\,\dd x=-\int_{-\infty}^0 F\psi_x\,\dd x-\int_0^{\infty} F\psi_x\,\dd x.
	\end{align*}
	Without loss we focus on $x<0$; the estimates on $(0,\infty) $ follow analogously. We now fix $T>0$ and let $\zeta$ be as in Proposition~\ref{paraboliczeta}.
	We calculate
	\begin{align*}
		\frac{\dd}{\dd t}\int_{-\infty}^0 f\zeta\,\dd x=\int_{-\infty}^0 \zeta_t f+\zeta f_t\,\dd x.
	\end{align*}
	Substituting $f=f_c+v_c-v$ and $f_t=u_t=(G'(u)-G'(v_c)-f_{cxx})_{xx}$ (cf.\ \eqref{eq:ft}) and using  equation~\eqref{eq:zetaequa} for $\zeta$, we can rewrite this as
	\begin{align*}
		\frac{\dd}{\dd t}\int_{-\infty}^0 f\zeta\,\dd x&=\int_{-\infty}^0 \zeta_t(v_c-v)\,\dd x\\
		&\quad+\int_{-\infty}^0 -G''(1)\zeta_{xx}f_c+\zeta\Big(G''(v_c)f_c\Big)_{xx}\,\dd x\\
		&\quad+\int_{-\infty}^0 \zeta\Big(G'(u)-G'(v_c)-G''(v_c)f_c\Big)_{xx}\,\dd x\\
		&\quad+\int_{-\infty}^0 \zeta_{xxxx}f_c-\zeta f_{cxxxx}\,\dd x=:I_1+I_2+I_3+I_4.
	\end{align*}
	We will now collect estimates for the time integral of terms $I_1$ to $I_4$ for $T$ large and small.
%	By Young's inequality and lemma \ref{l:mm}, it suffices to show that for $T\to \infty$ the time integral of each term is bounded by $o(1)\Mb+O(1)(\Mb^\alpha+1)$ for $\alpha\in(0,1)$.
	By Young's inequality and Lemma \ref{l:mm}, it suffices to show
\begin{align*}\displaystyle
  \int_0^T\left(I_1+\ldots+I_4\right)\dd t\lesssim
%   o(1)_{T\uparrow\infty}\,\Mb+\left(O(T^\frac{3}{4})+1\right)\left(\Mb^\frac{5}{6}+1\right).
  \begin{cases}
    \frac{\ln T}{T^\frac{1}{4}}\,\Mb +\Mb^\frac{5}{6}& \text{for }T\geq 2,\\
    \left(T^\frac{3}{4}+1\right)\left(\Mb^\frac{5}{6}+1\right)& \text{for all }T.
  \end{cases}
\end{align*}
We think of (and refer to) the case $T\geq 2$ as the ``large time'' case and the bound for general $T$ as the case of ``order one times.''
	
	\uline{Term $I_1$}: We estimate term $I_1$ as follows.
We begin by considering $T\geq 2$. From \eqref{folg} we deduce that
	\begin{align}
		 c^2+\abs{c}=(\abs{c}^{\frac{3}{2}}+\abs{c}^{\frac{1}{2}})\abs{c}^{\frac{1}{2}}\overset{\eqref{folg},\eqref{eq:defMb}}\lesssim \left(\Mb^{\frac{1}{3}}\right)^{\frac{3}{2}}\left(\frac{\Mb}{t^{\frac{1}{2}}}\right)^{\frac{1}{2}}=\frac{\Mb}{t^{\frac{1}{4}}}.\label{c2}
	\end{align}
For $t\in[0,T-1]$, we estimate
	\begin{align*}
	\int_0^{T-1}\abs*{\int_{-\infty}^0\zeta_t(v_c-v)\,\dd x}\,\dd t\overset{\eqref{A},\eqref{c2}}\lesssim
\Mb\int_0^{T-1}\frac{1}{t^{\frac{1}{4}}}\frac{1}{T-t}\,\dd t
	\lesssim\Mb \frac{\ln T}{T^{\frac{1}{4}}}.
	\end{align*}
	For the terminal layer $[T-1,T]$, we find
	\begin{align*}
		\int_{T-1}^{T}\abs*{\int_{-\infty}^0\zeta_t(v_c-v)\,\dd x}\,\dd t\overset{\eqref{A},\eqref{folg}}\lesssim\Mb\int_{T-1}^T\frac{1}{t^{\frac{1}{2}}}\frac{1}{(T-t)^{\frac{3}{4}}}\,\dd t
		\lesssim \frac{\Mb}{T^{\frac{1}{2}}}.
%,\\
%		\int_{0}^{1}\abs*{\int_{-\infty}^0\zeta_t(v_c-v)\,\dd x}\,\dd t\overset{\eqref{A},\eqref{folg}}\lesssim\Mb\int_{0}^1\frac{1}{t^{\frac{1}{2}}}\frac{1}{(T-t)^{\frac{1}{2}}}\,\dd t
%		\lesssim \frac{\Mb}{T^{\frac{1}{2}}}.
	\end{align*}
	For order one times, we use
	\begin{eqnarray*}
		\int_0^{T}\abs*{\int_{-\infty}^0\zeta_t(v_c-v)\,\dd x}\,\dd t&\overset{\eqref{A}}\lesssim&\int_0^T \left(\frac{1}{(T-t)^{\frac{1}{2}}}+\frac{1}{(T-t)^{\frac{3}{4}}}\right)\abs{c}\,\dd t\\\
		&\overset{\eqref{folg}}\lesssim& \left(T^{\frac{1}{2}}+T^{\frac{1}{4}}\right)\Mb^{\frac{1}{3}}.
	\end{eqnarray*}

	\uline{Term $I_2$}:
	For the ``second order elliptic term'' $I_2$, we integrate by parts (using $\zeta=0$ at $x=0$) to obtain
	\begin{align}
		I_2&=\int_{-\infty}^0\left(-G''(1)\zeta_{xx}f_c-\zeta_x\big(G''(v_c)f_c\big)_x\right)\,\dd x\notag\\
		&=\int_{-\infty}^0\big(G''(v_c)-G''(1)\big)\zeta_{xx}f_c\,\dd x-\zeta_xG''(v_c)f_c\big |_{x=0}.\label{bulkandsurface}
	\end{align}
	For the bulk term we  estimate
	\begin{eqnarray}
		\abs*{\int_{-\infty}^0\big(G''(v_c)-G''(1)\big)\zeta_{xx}f_c\,\dd x}&\leq&\norm{\zeta_{xx}}_{\infty}\int_{-\infty}^0 \abs*{\big(G''(v_c)-G''(1)\big)\,f_c}\,\dd x\notag\\
		&\overset{\eqref{eq:scale_E}, \eqref{linf}}\lesssim& \norm{\zeta_{xx}}_{\infty}\E^{\frac{1}{2}},\label{starone}
	\end{eqnarray}
where we have used the exponential, and thus in particular square integrable, tails of $f_c$.
	Integrating in time, we apply \eqref{zetaxx} and \eqref{nash1} to obtain
	\begin{align}\label{eq:T3/4t1/2}
		\int_0^T \abs*{\int_{-\infty}^0\big(G''(v_c)-G''(1)\big)\zeta_{xx}f_c\,\dd x}\,\dd t\lesssim \Mb\int_0^T\frac{1}{(T-t)^{\frac{1}{2}}}\frac{1}{t^{\frac{3}{4}}}\,\dd t\lesssim\Mb\,\frac{1}{T^{\frac{1}{4}}},
	\end{align}
which we use for $T\geq 2$.
	For order one times, we use square integrability of $f_c$, \eqref{linf}, and the other right-hand side term in \eqref{zetaxx} to deduce
	\begin{align*}
		\int_0^T \abs*{\int_{-\infty}^0\big(G''(v_c)-G''(1)\big)\zeta_{xx}f_c\,\dd x}\,\dd t\lesssim\int_0^T\frac{1}{(T-t)^{\frac{1}{4}}}\,\dd t\lesssim T^{\frac{3}{4}}.
	\end{align*}
	We turn now to the boundary term in \eqref{bulkandsurface}. Here we argue
	\begin{eqnarray*}
		\abs*{ \zeta_xG''(v_c)f_c\big |_{x=0}}&\lesssim&\norm{\zeta_{x}}_{\infty}\abs{f_c(0)}
		\overset{\eqref{zetax},\eqref{pw}}\lesssim \Mb^{\frac{1}{6}}D^{\frac{1}{2}}.
	\end{eqnarray*}
	We use \eqref{dissint} to deduce
	\begin{align*}
		\int_0^T\abs*{\zeta_xG''(v_c)f_c\big |_{x=0}}\,\dd t\lesssim\Mb^{\frac{5}{6}}.
	\end{align*}

	\uline{Term $I_3$}:
	We now address the ``nonlinear term'' $I_3$, which we reformulate as
	\begin{align*}
		I_3&=\int_{-\infty}^0 \zeta_{xx}\Big(G'(u)-G'(v_c)-G''(v_c)f_c\Big)\,\dd x-\zeta_x\Big(G'(u)-G'(v_c)-G''(v_c)f_c\Big)\big |_{x=0}.
	\end{align*}
	For the bulk term we estimate
	\begin{align}
		\abs*{ \int_{-\infty}^0 \zeta_{xx}\Big(G'(u)-G'(v_c)-G''(v_c)f_c\Big)\,\dd x}\overset{\eqref{linf}}\lesssim\norm{\zeta_{xx}}_{\infty}\int_{-\infty}^0 f_c^2\,\dd x
		\overset{\eqref{eq:scale_E}}\lesssim\norm{\zeta_{xx}}_{\infty}\,\E.\label{startwo}
	\end{align}
	By using $\E\lesssim 1$ to write $\E\lesssim\E^{1/2}$, we can treat this term as we did the bulk term from $I_2$.

	For the boundary term, we use
	\begin{align*}
		\abs*{\zeta_x\Big(G'(u)-G'(v_c)-G''(v_c)f_c\Big)\big |_{x=0}}\overset{\eqref{linf}}\lesssim\norm{\zeta_{x}}_{\infty}\,f_c^2(0)\overset{\eqref{linf}}\lesssim\norm{\zeta_{x}}_{\infty}\abs{f_c(0)},
	\end{align*}
	which we handle as we did the boundary term of $I_2$.

	\uline{Term $I_4$}:
	Finally, we turn to the ``fourth order elliptic term'' $I_4$, which we integrate by parts to express as
	\begin{align}
%		\int_{-\infty}^0 \zeta_{xxxx}\,f_c-\zeta \,f_{cxxxx}\,\dd x&=\int_{-\infty}^0\zeta_{xxxx}\,f_c+\zeta_x\,f_{cxxx}\,\dd x\notag\\
%		&=\int_{-\infty}^0\zeta_{xxxx}\,f_c-\zeta_{xx}\,f_{cxx}\,\dd x+\zeta_x\,f_{cxx}\big|_{x=0}\notag\\
%		&=\int_{-\infty}^0\zeta_{xxxx}\,f_c+\zeta_{xxx}\,f_{cx}\,\dd x+\zeta_x\,f_{cxx}\big|_{x=0}\notag\\
I_4		=\zeta_{xxx}\,f_{c}\big|_{x=0}+\zeta_x\,f_{cxx}\big|_{x=0}.\label{bdy12}
	\end{align}
	For the second boundary term, we estimate
	\begin{align*}
		 \abs*{\zeta_x\,f_{cxx}\big|_{x=0}}\overset{\eqref{zetax}}\lesssim\norm{f_{cxx}}_{\infty}\overset{\eqref{linf}}\lesssim \D^{\frac{1}{2}}
	\end{align*}
	and proceed as for the boundary term in $I_2$. For the first boundary term in \eqref{bdy12}, on the other hand, we observe
	\begin{align*}
		 \abs*{\zeta_{xxx}\,f_{c}\big|_{x=0}}\overset{\eqref{linf}}\lesssim\norm{\zeta_{xxx}}_{\infty}\E^{\frac{1}{2}}.
	\end{align*}
	Integrating in time, we estimate
	\begin{align*}
		\int_0^{T}\norm{\zeta_{xxx}}_{\infty}\E^{\frac{1}{2}}\,\dd t
	  	\overset{\eqref{zetaxxx},\eqref{nash1}}\lesssim \Mb\int_0^{T}\frac{1}{(T-t)^{\frac{1}{2}}}\frac{1}{t^{\frac{3}{4}}}\,\dd t
		\lesssim \Mb \frac{1}{T^{\frac{1}{4}}}
	\end{align*}
	for $T\geq 2$, while for order one times, we use
	\begin{align*}
		\int_0^{T}\norm{\zeta_{xxx}}_{\infty}\E^{\frac{1}{2}}\,\dd t
		\overset{\eqref{diff},\eqref{zetaxxx}}\lesssim \int_0^{T}\frac{1}{(T-t)^{\frac{1}{2}}}\,\dd t
		\lesssim T^{\frac{1}{2}}.
	\end{align*}
\end{proof}

\section{Relaxation under bounded initial excess mass}\label{section:V}
As in the previous section, we begin by collecting the ingredients that we will use to prove the relaxation estimate under the assumption of bounded initial excess mass, cf.\  \eqref{eq:energydecayV} of Theorem~\ref{t:mainresult}.
With these ingredients in hand, the proof of \eqref{eq:energydecayV} follows easily. The proofs of the supporting lemmas are given in
Subsection~\ref{ss:pflemmaV}.

As in Section \ref{section:M}, we work throughout Section \ref{section:V} \emph{under the assumption} \eqref{eq:epsilon}.

\subsection{Parabolic estimates for $V$}\label{se:parabolic_estimates}

In the duality argument for $V$, we will need that solutions of the fourth order equation on a time-dependent domain satisfy the same estimates as on the half-line. It is in order to show this that one needs Schauder theory, to treat the motion perturbatively. Motion of the boundary at rate $t^{\frac{1}{2}}$ (on large temporal scales) would be critical; in our application, it is sufficient to take $t^\frac{1}{4}$, which is subcritical.

\begin{proposition}\label{le:regularity}
	For any $C_1\in (0,\infty)$, there exists $\Lambda\in[1,\infty)$ such that the following holds. Consider the curve
	\begin{align*}
		\gamma(t):=c(T)-C_1 (T-t+\Lambda)^{\frac{1}{4}},
	\end{align*}
	and let $\zeta$ satisfy	the backwards problem
	\begin{align}\label{eq:zetagamma}
		\left\{
		\begin{aligned}
			\zeta_t+G''(1)\zeta_{xx}-\zeta_{xxxx}&=0		&&\text{on }	&&t\in [0,T),	&&x\in (-\infty,\gamma(t)),\\
			\zeta=\zeta_{xx}&=0								&&\text{for }	&&t\in [0,T),	&& x=\gamma(t),\\
			\zeta&=\g										&&\text{for }	&&t=T,			&& x\in (-\infty,0], \\
		\end{aligned}
		\right.
	\end{align}
	where $\g$ satisfies $\norm{\g}_\infty\leq 1$.
	There holds
	\begin{align}
		\norm*{\zeta}_{\infty}&\lesssim 1,\label{zetag}\\
		\norm*{\zeta_{x}}_{\infty}&\lesssim \frac{1}{(T-t)^{\frac{1}{2}}}\wedge\frac{1}{(T-t)^{\frac{1}{4}}},\label{zetagx}\\
		\norm*{\zeta_{xx}}_{\infty}&\lesssim \frac{1}{T-t}\wedge\frac{1}{(T-t)^{\frac{1}{2}}},\label{zetagxx}\\
		 \norm*{\zeta_{xxx}}_{\infty}&\lesssim\frac{1}{(T-t)^{\frac{3}{2}}}\wedge\frac{1}{(T-t)^{\frac{3}{4}}},\label{zetagxxx}\\
		 \norm*{\zeta_{xxxx}}_{\infty}&\lesssim\frac{1}{(T-t)^{2}}\wedge\frac{1}{T-t}.\label{zetagxxxx}
	\end{align}
\end{proposition}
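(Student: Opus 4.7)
The plan is to straighten the moving boundary by a linear change of variables and then treat the resulting first-order transport term as a small perturbation, invoking the Schauder theory of Proposition \ref{prop:Schauder} and choosing $\Lambda$ large enough to absorb the perturbative constants.

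Concretely, I would set $y := x - \gamma(t) + \gamma(T)$ and $\tilde{\zeta}(t,y) := \zeta(t, y + \gamma(t) - \gamma(T))$, so that the moving boundary $\{x = \gamma(t)\}$ is mapped to the fixed half-line boundary $\{y = \gamma(T)\}$. Equation \eqref{eq:zetagamma} then becomes
\begin{align*}
\tilde{\zeta}_t + G''(1)\tilde{\zeta}_{yy} - \tilde{\zeta}_{yyyy} = \gamma_t(t)\,\tilde{\zeta}_y,
\end{align*}
with $\tilde\zeta = \tilde\zeta_{yy} = 0$ at $y = \gamma(T)$ and terminal data $\psi$. The key quantitative input is
\begin{align*}
|\gamma_t(t)| = \tfrac{C_1}{4}(T - t + \Lambda)^{-3/4},
\end{align*}
which is uniformly of order $\Lambda^{-3/4}$ for $T - t \lesssim \Lambda$ (producing a smallness factor when $\Lambda$ is large), and which for $T - t \gg \Lambda$ is subcritical in the sense that the boundary displacement $(T-t)^{1/4}$ is small compared with the heat length $(T-t)^{1/2}$ associated with the dominant second-order term.

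For the fixed-boundary problem (the above equation without the $\gamma_t \tilde\zeta_y$ term) with data $\|\psi\|_\infty \leq 1$, the analogues of \eqref{zetag}--\eqref{zetagxxxx} follow from standard representations: odd reflection across $y = \gamma(T)$ reduces the problem to one on all of $\R$, after which the fundamental solution of $\partial_t + G''(1)\partial_{yy} - \partial_{yyyy}$ yields the two-regime scaling---biharmonic $(T-t)^{-k/4}$ for $T-t \lesssim 1$ and heat $(T-t)^{-k/2}$ for $T-t \gtrsim 1$ on the $k$-th derivative---which produces the wedges in \eqref{zetagx}--\eqref{zetagxxxx} and the $L^\infty$ bound \eqref{zetag}.

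To then restore the perturbation $\gamma_t \tilde\zeta_y$, I would use a Duhamel representation
\begin{align*}
\tilde{\zeta}(t) = S(t,T)\psi + \int_t^T S(t,s)\bigl(\gamma_s(s)\,\tilde{\zeta}_y(s)\bigr)\,\dd s
\end{align*}
with $S(t,s)$ the fixed-boundary semigroup, and run a contraction/bootstrap iteration in a weighted norm that encodes the scalings of \eqref{zetag}--\eqref{zetagxxxx}. Proposition \ref{prop:Schauder}, together with the Carnot--Carath\'eodory H\"older seminorm \eqref{eq:hoelder_semi_norm} tailored to the two-regime scaling of the operator, provides higher-derivative estimates of $\tilde\zeta$ in terms of the source $\gamma_s\tilde\zeta_y$. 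The smallness of $|\gamma_s|$ via large $\Lambda$ then closes the iteration with the same functional form as the fixed-boundary bounds. The main obstacle is this last step: verifying that $\gamma_t \tilde\zeta_y$ lies in the correct weighted H\"older class so that Schauder theory applies, and in particular that the iteration converges uniformly in the regime $T - t \gg \Lambda$ where $|\gamma_t|$ is no longer small---here one must rely on the subcritical scaling of the boundary motion relative to the natural length $(T-t)^{1/2}$ and on the quantitative form of Proposition \ref{prop:Schauder}, not merely on a smallness argument.
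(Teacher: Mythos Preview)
Your approach is essentially the paper's: straighten the boundary, treat the resulting advection term $\dot\gamma\,\tilde\zeta_y$ as a right-hand side, split into a homogeneous piece carrying the data and an inhomogeneous piece controlled via the Schauder theory of Proposition~\ref{prop:Schauder}, and absorb the latter by choosing $\Lambda$ large. (The paper also reverses time and reflects space so as to work on $(0,\infty)$ forward in time, and writes the source as $g_x$ with $g=\dot\gamma\zeta$, but this is cosmetic.)

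One correction to your closing paragraph: the obstacle you flag is not there. For $T-t\gg\Lambda$ the coefficient $|\gamma_t|\sim(T-t)^{-3/4}$ is \emph{smaller}, not larger; more to the point, the quantity that must be small in the absorption step is the weighted one, and in the paper's bookkeeping this reduces to
\[
\sup_{\tau>0}\,\tau^{1/2}\,|\dot\gamma(\tau)|\;\lesssim\;\sup_{\tau>0}\,\frac{\tau^{1/2}}{(\tau+\Lambda)^{3/4}}\;\lesssim\;\Lambda^{-1/4},
\]
which is uniformly small in $\tau$. So the perturbative step \emph{is} a pure smallness argument---the subcritical exponent $\tfrac14<\tfrac12$ is precisely what makes the supremum above finite and of order $\Lambda^{-1/4}$---and no separate treatment of the regime $T-t\gg\Lambda$ is required.
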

\begin{remark}
  As is clear from the proof, it suffices to take $\Lambda$ large with respect to $C_1^4$.
\end{remark}
We prove the proposition in Subsection~\ref{subsection:parabolicvariable}.

\subsection{Nonlinear ingredients and proof of \eqref{eq:energydecayV}}
As in Section~\ref{section:M}, an important observation is the following Nash-type inequality.

\begin{lemma}[Nash's inequality]\label{le:nash}
	There holds
	\begin{align}\label{eq:nash}
		\energygap \lesssim \dissipation^{\frac{1}{3}}(V+1)^{\frac{4}{3}}.
	\end{align}
\end{lemma}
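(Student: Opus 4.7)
The plan is to bound $\E$ piece by piece using $\E\sim\int f_c^2+f_{cx}^2\,\dd x$ from \eqref{eq:scale_E}, combining a classical one-dimensional Nash inequality (for the $L^2$ part) with a simple interpolation using the uniform boundedness of $\E$ (for the $H^1$ part).

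First I would control $\int f_c^2\,\dd x$ by the one-dimensional Nash inequality
\begin{align*}
\int g^2\,\dd x\lesssim \left(\int|g|\,\dd x\right)^{4/3}\left(\int g_x^2\,\dd x\right)^{1/3},
\end{align*}
applied to $g=f_c$. This is elementary: from $|g(x)|^2\leq 2\int|g\,g_x|\,\dd x$ one gets $\|g\|_\infty\lesssim\|g\|_2^{1/2}\|g_x\|_2^{1/2}$, and combining with $\|g\|_2^2\leq\|g\|_\infty\|g\|_1$ yields the displayed inequality. Since $\int|f_c|\,\dd x=V$ and $\int f_{cx}^2\,\dd x\lesssim D$ by \eqref{eq:est_D}, this gives
\begin{align*}
\int f_c^2\,\dd x\lesssim V^{4/3}D^{1/3}\leq D^{1/3}(V+1)^{4/3}.
\end{align*}

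Next I would treat $\int f_{cx}^2\,\dd x$ by splitting the exponent as $1=\tfrac13+\tfrac23$ and using the uniform bound $\int f_{cx}^2\,\dd x\lesssim\E\lesssim 1$ from \eqref{eq:scale_E} and \eqref{eq:Edecreassing} on the $\tfrac23$-factor, together with $\int f_{cx}^2\,\dd x\lesssim D$ on the $\tfrac13$-factor:
\begin{align*}
\int f_{cx}^2\,\dd x=\left(\int f_{cx}^2\,\dd x\right)^{1/3}\left(\int f_{cx}^2\,\dd x\right)^{2/3}\lesssim D^{1/3}\cdot 1\leq D^{1/3}(V+1)^{4/3}.
\end{align*}
Adding the two contributions yields \eqref{eq:nash}.

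Essentially nothing here is hard: the Nash-type inequality on the line is the same classical estimate Nash used in his heat-kernel bounds, and the second piece is a trivial interpolation relying only on the a priori bound $\E\lesssim 1$. The only conceptual subtlety is that, unlike in the $M$-bound \eqref{al.1}, the $+1$ on the right-hand side is genuinely needed here because there is no control of $\int f_{cx}^2\,\dd x$ directly in terms of $V$ alone; it arises from using the uniform energy bound to absorb the ``$H^1$ part'' of $\E$. So my expected main (minor) obstacle is simply the bookkeeping of where the $+1$ comes from---I would emphasize in the write-up that the Nash step by itself gives $\E\lesssim D^{1/3}V^{4/3}+D^{1/3}$, and the two terms combine into the single $(V+1)^{4/3}$.
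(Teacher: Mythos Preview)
Your proof is correct and follows essentially the same approach as the paper: both split $\E\sim\int f_c^2+f_{cx}^2\,\dd x$, bound $\int f_c^2\,\dd x$ by the one-dimensional Nash inequality (the paper phrases this via the sup bound \eqref{easy2} applied to $f_c$, which is the same content), and handle $\int f_{cx}^2\,\dd x$ by the trivial interpolation $\int f_{cx}^2\,\dd x\lesssim D^{1/3}\E^{2/3}\lesssim D^{1/3}$ using $\E\lesssim 1$.
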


In analogy to the case of first moments, we momentarily fix a time horizon $T\in (0,\infty)$ and define
\begin{align*}
	\Vb:=\sup_{t\leq T}\V\vee 1.
\end{align*}
From \eqref{eq:nash} we deduce control on the energy gap $\E$ in terms of $\Vb$.

\begin{lemma}\label{le:decayEV}
	The energy gap is bounded above by
	\begin{align}\label{eq:decay_E}
		\energygap\lesssim \frac{\bar{\V}^{2}}{t^{\frac{1}{2}}}\qquad \text{for }t\in[0,T].
	\end{align}
\end{lemma}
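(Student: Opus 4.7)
The plan is to combine the Nash-type inequality \eqref{eq:nash} with the dissipation identity $\dot{\E} = -D$ from \eqref{diff}, in direct analogy to the proof of Corollary~\ref{l:nash}. Since we are working on the time interval $[0,T]$ and $V \le \bar V$ there, while $\bar V \ge 1$ by definition, we have $(V+1)^4 \lesssim \bar V^4$ uniformly in $t \le T$. Inverting \eqref{eq:nash} then gives
\begin{align*}
D \;\gtrsim\; \frac{\E^3}{(V+1)^4} \;\gtrsim\; \frac{\E^3}{\bar V^4}
\qquad \text{for all } t \in [0,T].
\end{align*}

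Next, I would substitute this into $\dot \E = -D$ to obtain the differential inequality
\begin{align*}
\dot{\E} \;\lesssim\; -\frac{\E^3}{\bar V^4}.
\end{align*}
This is a Bernoulli-type ODE that I would linearize via the substitution $\E^{-2}$: multiplying both sides by $-2\E^{-3}$ yields
\begin{align*}
\frac{\dd}{\dd t}\bigl(\E^{-2}\bigr) \;=\; -2\E^{-3}\dot\E \;\gtrsim\; \frac{1}{\bar V^4}.
\end{align*}
Integrating in time on $[0,t]$ (and dropping the nonnegative initial contribution $\E^{-2}(0)$) gives $\E^{-2}(t) \gtrsim t/\bar V^4$, which rearranges to the claimed estimate
\begin{align*}
\E(t) \;\lesssim\; \frac{\bar V^2}{t^{1/2}}.
\end{align*}

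I do not foresee a real obstacle here; the argument is a standard Nash-type ODE estimate, identical in spirit to the one already deployed for $M$ in Corollary~\ref{l:nash}. The only subtle point is the use of the supremum $\bar V$ rather than the instantaneous value $V(t)$: this is legitimate because $\bar V$ is monotone nondecreasing in $T$, and we are free to freeze the finite horizon $T$ and carry out the ODE analysis on $[0,T]$. The hard part of the overall argument will come later, when one has to show that $\bar V$ itself is controlled by $V_0 + 1$ via the duality argument using Proposition~\ref{le:regularity}; the present lemma is the easy half and simply converts that control into a time-decay rate for $\E$.
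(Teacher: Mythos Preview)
Your proof is correct and follows exactly the approach the paper indicates: it omits the details but states that the argument is analogous to Corollary~\ref{l:nash}, namely combining the Nash-type inequality~\eqref{eq:nash} with $\dot\E=-D$ from~\eqref{diff} and integrating the resulting ODE in time. Your write-up simply spells out that standard ODE computation.
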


It remains to control $\bar{\V}$.
As in the previous section, we first derive an auxiliary dissipation estimate.
\begin{lemma}[Integral dissipation bound]\label{l:dissint2}
	For any $\frac{2}{3}<\exponent\leq 1$,	there holds
	\begin{align}
		\int_0^T \D^\exponent\,\dd t\lesssim \Vb^{4(1-\exponent)}.
\label{dissint2}
	\end{align}
\end{lemma}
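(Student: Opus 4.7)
The strategy mirrors that of Lemma \ref{l:dissint}, but using the slower energy decay \eqref{eq:decay_E} in place of \eqref{nash1}. For fixed $\gamma\in(\tfrac{2}{3},1]$, I would choose an auxiliary exponent $\theta\in\bigl(\tfrac{1}{\gamma}-1,\tfrac{1}{2}\bigr)$; the interval is nonempty precisely because $\gamma>\tfrac{2}{3}$. The lower bound on $\theta$ is needed so that the weight $t^{-\gamma\theta/(1-\gamma)}$ is integrable at infinity, while the upper bound will ensure integrability at infinity of $t^{\theta-3/2}$, which replaces the $t^{\theta-5/2}$ appearing in the $M$-case (the shift is $1$ because $\E$ decays like $t^{-1/2}$ here rather than $t^{-3/2}$).

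I would then fix an auxiliary time $\tau\in(0,T)$ and split the integral as $\int_0^T\D^\gamma\,\dd t=\int_0^\tau\D^\gamma\,\dd t+\int_\tau^T\D^\gamma\,\dd t$. On $(\tau,T)$, I would apply H\"older's inequality with weight $t^\theta$ and rewrite $t^\theta\D=-\frac{\dd}{\dd t}(t^\theta\E)+\theta t^{\theta-1}\E$, exactly as in the proof of Lemma \ref{l:dissint}. Using \eqref{eq:decay_E}, both the boundary contribution $\tau^\theta\E(\tau)$ and the integrated bulk term $\int_\tau^T t^{\theta-1}\E\,\dd t$ are controlled by $\bar V^2\tau^{\theta-1/2}$, which together with the prefactor $\tau^{(1-\gamma)-\gamma\theta}$ coming from the weight integral yields
\begin{align*}
\int_\tau^T\D^\gamma\,\dd t\lesssim \bar V^{2\gamma}\,\tau^{1-\frac{3\gamma}{2}}.
\end{align*}

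On the complementary interval $(0,\tau)$, I would argue exactly as in \eqref{otherhand}, using H\"older and the dissipation identity $\int_0^\tau\D\,\dd t\leq \E_0\lesssim 1$, to obtain $\int_0^\tau\D^\gamma\,\dd t\lesssim\tau^{1-\gamma}$. Adding the two pieces gives
\begin{align*}
\int_0^T\D^\gamma\,\dd t\lesssim \tau^{1-\gamma}+\bar V^{2\gamma}\,\tau^{1-\frac{3\gamma}{2}},
\end{align*}
and optimizing in $\tau$ (balancing the two terms yields $\tau\sim \bar V^4$) gives the claimed bound $\bar V^{4(1-\gamma)}$. If the optimal $\tau$ should exceed $T$, the estimate on $(0,T)$ alone suffices, just as in the proof of Lemma \ref{l:dissint}. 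The main thing to verify carefully is the compatibility of the two constraints on $\theta$, which is exactly the content of the hypothesis $\gamma>\tfrac{2}{3}$; otherwise the argument is essentially bookkeeping.
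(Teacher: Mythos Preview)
Your proposal is correct and follows essentially the same approach as the paper, which simply refers back to the proof of Lemma~\ref{l:dissint} with \eqref{eq:decay_E} substituted for \eqref{nash1} and arrives at the identical expression $\tau^{1-\gamma}+\Vb^{2\gamma}\tau^{1-\frac{3\gamma}{2}}$ before optimizing. Your identification of the admissible range $\theta\in(\tfrac{1}{\gamma}-1,\tfrac{1}{2})$ and its link to the hypothesis $\gamma>\tfrac{2}{3}$ is exactly right.
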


We now turn to the duality argument
for control of  $\Vb$.
\begin{proposition}[Duality argument]\label{le:Vbbounded}
	For any $T\in(0,\infty) $ there holds
	\begin{align}\label{eq:Vbbounded}
		\Vb\lesssim \V_{0}+1.
	\end{align}
\end{proposition}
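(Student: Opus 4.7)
The plan is to follow the scheme of Proposition~\ref{l:dualM}, adapted to the $L^{1}$--$L^{\infty}$ duality and to a spatial domain that moves to track the kink. Fix $T\in(0,\infty)$, set $\bar c:=c(T)$, and use
\begin{align*}
  V(T)=\sup\Bigl\{\int f_{c}(T,\cdot)\,\psi\,\dd x\;:\;\norm{\psi}_{\infty}\le 1\Bigr\}.
\end{align*}
Because $c$ need not be small, we cannot work on the fixed half-lines $(\pm\infty,0)$ of the first-moment argument. Instead, with $\Lambda$ as in Proposition~\ref{le:regularity}, we introduce the moving curves $\gamma^{\pm}(t):=\bar c\pm C_{1}(T-t+\Lambda)^{1/4}$ and split
\begin{align*}
  V(T)\le\int_{-\infty}^{\gamma^{-}(T)}\abs{f_{c}}\,\dd x+\int_{\gamma^{-}(T)}^{\gamma^{+}(T)}\abs{f_{c}}\,\dd x+\int_{\gamma^{+}(T)}^{\infty}\abs{f_{c}}\,\dd x.
\end{align*}
The middle piece sits on an interval of length $2C_{1}\Lambda^{1/4}\lesssim 1$ and is bounded by $\norm{f_{c}}_{\infty}\lesssim 1$ via \eqref{linf}; the right piece is treated symmetrically, so we focus on the left.

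For an admissible $\psi$ supported in $(-\infty,\gamma^{-}(T)]$, Proposition~\ref{le:regularity} furnishes a solution $\zeta$ of \eqref{eq:zetagamma} on $\{x<\gamma^{-}(t)\}$ satisfying \eqref{zetag}--\eqref{zetagxxxx}. Since $\zeta=0$ on the moving boundary, Leibniz's rule gives
\begin{align*}
  \frac{\dd}{\dd t}\int_{-\infty}^{\gamma^{-}(t)}f\,\zeta\,\dd x=\int_{-\infty}^{\gamma^{-}(t)}\bigl(f_{t}\zeta+f\zeta_{t}\bigr)\,\dd x.
\end{align*}
Substituting $f=f_{c}+(v_{c}-v)$, the evolution equation~\eqref{eq:ft}, and the backwards equation~\eqref{eq:zetagamma}, and then integrating by parts four times (using both $\zeta=0$ and $\zeta_{xx}=0$ at $x=\gamma^{-}(t)$), we obtain the same decomposition $I_{1}+I_{2}+I_{3}+I_{4}$ as in Proposition~\ref{l:dualM}. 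Integrating from $0$ to $T$ and bounding the initial contribution by $\norm{\zeta(0,\cdot)}_{\infty}\int\abs{f(0)}\,\dd x\lesssim V_{0}+\abs{c(0)}+1\lesssim V_{0}+1$ (using the elementary a priori bound $\abs{c}\le V/2$ coming from $\int f_{c}\,\dd x=2c$, combined with exponential decay of $v_{c(0)}-v$), the task reduces to controlling $\int_{0}^{T}(I_{1}+\ldots+I_{4})\,\dd t$.

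Each $I_{j}$ is estimated via \eqref{zetag}--\eqref{zetagxxxx}, the energy decay $\E\lesssim\Vb^{2}/t^{1/2}$ from Lemma~\ref{le:decayEV}, and the dissipation integral $\int_{0}^{T}\D^{\exponent}\,\dd t\lesssim\Vb^{\,4(1-\exponent)}$ from Lemma~\ref{l:dissint2} with $\exponent$ close to $1$. Bulk contributions integrate convergent kernels of the form $(T-t)^{-a}t^{-b}$ with $a,b<1$, and the sub-critical boundary velocity $(T-t)^{1/4}$ ensures that $\Vb$ enters only with a sub-unit coefficient or as a sub-linear power. The main obstacle is the boundary contribution from the fourth-order term at $x=\gamma^{-}(t)$: pointwise control of $f_{c}$ and $f_{cxx}$ there requires a variant of \eqref{pw}, and one must verify that for $\Lambda$ large enough (depending on $C_{1}$) the curve $\gamma^{-}(t)$ remains clear of the transition layer of $v_{c(t)}$ throughout $[0,T]$. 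Once the estimates close to $V(T)\le C(V_{0}+1)+\delta\Vb$ with $\delta<1$ (or with a sub-linear dependence on $\Vb$), taking the supremum over $T$ and absorbing yields \eqref{eq:Vbbounded}.
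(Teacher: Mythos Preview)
Your overall architecture—moving boundary, duality, the backwards problem from Proposition~\ref{le:regularity}, and buckling—matches the paper. The gap is in the choice of test quantity and, consequently, in the term $I_1$.

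You evolve $\int_{-\infty}^{\gamma^{-}(t)} f\,\zeta\,\dd x$ with $f=u-v$ and claim the decomposition of Proposition~\ref{l:dualM}, so that $I_{1}=\int_{-\infty}^{\gamma^{-}(t)}\zeta_{t}(v_{c}-v)\,\dd x$. The difficulty is that $v_{c}-v$ involves \emph{two} transition layers: that of $v_{c}$ at $x=c(t)$ (which you correctly keep to the right of $\gamma^{-}$) and that of the centered kink $v$ at $x=0$. When $\bar c=c(T)$ is large, the origin lies inside $(-\infty,\gamma^{-}(t))$ for $t$ near $T$, and on the strip $0<x<\gamma^{-}(t)$ one has $v_{c}-v\approx -2$. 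Hence $\int_{-\infty}^{\gamma^{-}}\lvert v_{c}-v\rvert\sim 2|c(t)|\le\Vb$ with no time decay, so $\int_{0}^{T}\lVert\zeta_{t}\rVert_{\infty}\int\lvert v_{c}-v\rvert\,\dd t$ produces a factor $\Vb\ln T$; integrating by parts via the equation for $\zeta$ instead yields boundary terms of size $(T-t)^{-3/4}$ multiplied by $(v_{c}-v)(\gamma^{-})\approx -2$, which integrates to $T^{1/4}$. Neither bound is absorbable in the buckling step, because the first carries a unit coefficient on $\Vb$ and the second grows with $T$.

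The paper sidesteps this by replacing $f$ with $u+1$ on the left (and $u-1$ on the right): it introduces the stand-in functional $\Vt=\int_{-\infty}^{\gamma_{-}}\lvert u+1\rvert+\int_{\gamma_{+}}^{\infty}\lvert u-1\rvert$, shows $\Vt\lesssim V+1$ and $V(T)\lesssim\Vt(T)+1$, and runs the duality for $\int_{-\infty}^{\gamma_{-}}\zeta(u+1)\,\dd x$. The analogue of $I_{1}$ is then $\int_{-\infty}^{\gamma_{-}}\zeta_{t}(v_{c}+1)\,\dd x$, and since $\gamma_{-}(t)\le c(t)-(T-t+\Lambda)^{1/4}$ (this is where Lemma~\ref{le:roughboundonc} enters), one has $\int_{-\infty}^{\gamma_{-}}\lvert v_{c}+1\rvert\lesssim\exp(-(T-t+\Lambda)^{1/4}/C)$, which is integrable against $\lVert\zeta_{t}\rVert_{\infty}\lesssim(T-t)^{-1}$. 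Your sketch can be repaired by exactly this substitution; the remaining terms $I_{2}$--$I_{4}$ are then as you describe and match the paper.
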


These ingredients suffice to establish energy decay in the case of bounded excess mass.

\begin{proof}[Proof of Theorem \ref{t:mainresult}, case (ii)]
	Proposition~\ref{le:Vbbounded} establishes \eqref{slowdown2}.
	The first estimate in \eqref{eq:energydecayV} follow from the combination of \eqref{eq:Edecreassing}, Lemma \ref{le:decayEV}, and Proposition \ref{le:Vbbounded}. For the second estimate in \eqref{eq:energydecayV}, we observe
$2c=\int (v-v_c)\,\dd x=\int f_c\,\dd x$, since $v-v_c=f_c-f$ and the integral of $f$ vanishes (cf. proof of Lemma~\ref{l:Malg}). Hence $2|c|\leq V$ and Proposition~\ref{le:Vbbounded} completes the proof.
\end{proof}

\subsection{Proofs of auxiliary results}\label{ss:pflemmaV}

\begin{proof}[Proof of Lemma \ref{le:nash}]
	We want to estimate $\energygap$ via \eqref{eq:scale_E}.
	Using \eqref{eq:est_D} and \eqref{easy2} (applied to $f_c$), we obtain
	\begin{align*}
		\int f_{c}^{2}\,\dd x\leq \norm{f_{c}}_{\infty}\int \abs{f_{c}}\,\dd x\overset{\eqref{easy2}}\lesssim
		\left(\int f_{cx}^2 \,\dd x\right)^{\frac{1}{3}} \left(\int \abs{f_{c}}\,\dd x\right)^{\frac{4}{3}}
		\overset{\eqref{eq:est_D},\eqref{eq:definitionV}}\lesssim D^{\frac{1}{3}}V^{\frac{4}{3}}.
	\end{align*}
	For the gradient term we infer
	\begin{align*}
		\int f_{cx}^{2}\,\dd x\overset{\eqref{eq:scale_E}, \eqref{eq:est_D} }\lesssim D^{\frac{1}{3}}\E^{\frac{2}{3}}
		\overset{\eqref{eq:Edecreassing}}\lesssim D^{\frac{1}{3}}
		\leq D^{\frac{1}{3}}(V+1)^{\frac{4}{3}}.
	\end{align*}
By \eqref{eq:scale_E}, these two estimates yield \eqref{eq:nash}.
\end{proof}

The proof of Lemma \ref{le:decayEV}, which is part of Nash's classical approach to heat kernel bounds,
is analogous to the proof of  Corollary \ref{l:nash} and we omit it.

\begin{proof}[Proof of Lemma \ref{l:dissint2}]
For any $\frac{2}{3}<\exponent\leq 1$, we proceed as in the proof of Lemma \ref{l:dissint} (substituting \eqref{eq:decay_E} instead of \eqref{nash1}) to obtain
\begin{align*}
  \int_0^T D^\exponent \,\dd t\lesssim \tau^{1-\exponent}+\frac{\Vb^{2\exponent}}{\tau^{\frac{3\exponent}{2}-1}}.
\end{align*}
Optimization in $\tau$ gives \eqref{dissint2}.
\end{proof}

\begin{proof}[Proof of Proposition \ref{le:Vbbounded}]

We begin with a rough bound on the motion of zeros.
\begin{lemma}\label{le:roughboundonc}
	For any $0\leq t\leq T$, there holds
	\begin{align}\label{eq:roughboundc}
		\abs{c(T)-c(t)}\lesssim \energygap^{\frac{1}{2}}(t)(T-t+1)^{\frac{1}{4}}\overset{\eqref{eq:Edecreassing}}\lesssim (T-t+1)^{\frac{1}{4}}.
%		+\energygap(t)\lesssim \abs{T-t}^{\frac{1}{4}}+1.
	\end{align}
\end{lemma}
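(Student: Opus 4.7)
The strategy is to control the velocity of the $L^2$-projection $c$ pointwise and then integrate, treating short and long time intervals separately.

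\textbf{Step 1: Pointwise bound $|\dot c(\tau)|\lesssim D(\tau)^{1/2}$.} I differentiate the Euler--Lagrange condition $\int (u-v_c)v_{cx}\,\dd x=0$ (cf.\ \eqref{eq:ELvc}) in time. Since $\partial_\tau v_c=-\dot c\,v_{cx}$ and $\partial_\tau v_{cx}=-\dot c\,v_{cxx}$, this gives
\[
\dot c\Bigl[\int v_{cx}^2\,\dd x-\int f_c v_{cxx}\,\dd x\Bigr]=-\int u_t v_{cx}\,\dd x.
\]
The bracket equals $e_* + O(\energygap^{1/2})$ and is bounded below by $e_*/2$ via \eqref{eq:scale_E} and \eqref{eq:Edecreassing}. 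For the right-hand side, use $u_t=\partial_x^2(G'(u)-u_{xx})$, subtract the vanishing term $(G'(v_c)-v_{cxx})_{xx}=0$, and integrate by parts twice:
\[
\int u_t v_{cx}\,\dd x=-\int (G'(u)-u_{xx})_x\,v_{cxx}\,\dd x.
\]
Cauchy--Schwarz together with $\|(G'(u)-u_{xx})_x\|_{L^2}^2=D$ and $\|v_{cxx}\|_{L^2}\lesssim 1$ yields $|\dot c|\lesssim D^{1/2}$.

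\textbf{Step 2: Short-time regime $T-t\le 1$.} Cauchy--Schwarz in time together with $\int_t^T D\,\dd\tau=\energygap(t)-\energygap(T)\le\energygap(t)$ gives
\[
|c(T)-c(t)|\le\int_t^T D^{1/2}\,\dd\tau\le (T-t)^{1/2}\energygap(t)^{1/2},
\]
and since $(T-t)^{1/2}\le 2^{1/4}(T-t+1)^{1/4}$ on $[0,1]$, the desired estimate follows.

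\textbf{Step 3: Long-time regime $T-t\ge 1$.} Here the naive bound $(T-t)^{1/2}\energygap^{1/2}$ is too weak, and I interpolate in $\dot H^{-1}$. By \eqref{diff} and Cauchy--Schwarz, $\|u(T)-u(t)\|_{\dot H^{-1}}\le (T-t)^{1/2}\energygap(t)^{1/2}$. The uniform bound $\|u_x\|_{L^2}\lesssim\|v_{cx}\|_{L^2}+\|f_{cx}\|_{L^2}\lesssim 1$ (from \eqref{eq:scale_E} and \eqref{eq:Edecreassing}) combined with the interpolation $\|g\|_{L^2}^2\le\|g\|_{\dot H^{-1}}\|g_x\|_{L^2}$ yields
\[
\|u(T)-u(t)\|_{L^2}^2\lesssim (T-t)^{1/2}\energygap(t)^{1/2}.
\]
The shift is then recovered from the elementary coercivity
\[
\|v_a-v_b\|_{L^2}^2\sim (a-b)^2\wedge |a-b|,
\]
the triangle inequality, and $\|f_c\|_{L^2}\lesssim\energygap^{1/2}$, which together convert $L^2$-closeness of $u(T)$ and $u(t)$ into control of $|c(T)-c(t)|$. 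Absorbing exponents with $\energygap\le 1$ yields $|c(T)-c(t)|\lesssim (T-t+1)^{1/4}\energygap(t)^{1/2}$.

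\textbf{Main obstacle.} The technical heart lies in Step 3, where the metric on the kink manifold transitions from linear behaviour at small separations to square-root behaviour at large separations. Handling this regime change while maintaining the sharp exponent $(T-t+1)^{1/4}$, rather than the easier $(T-t)^{1/2}$ from direct $D^{1/2}$-integration, is the delicate point and is what makes the bound sub-critical for the moving curve $\gamma$ in Proposition \ref{le:regularity}.
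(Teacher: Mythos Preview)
Your Steps 1 and 2 are correct, but Step 3 does not close. After the interpolation you obtain $\|u(T)-u(t)\|_{L^2}^2 \lesssim (T-t)^{1/2}\energygap(t)^{1/2}$, and the triangle inequality together with $\|f_c\|_{L^2}\lesssim\energygap^{1/2}$ then gives
\[
\bigl(c(T)-c(t)\bigr)^2 \wedge |c(T)-c(t)| \;\sim\; \|v_{c(T)}-v_{c(t)}\|_{L^2}^2 \;\lesssim\; (T-t)^{1/2}\energygap(t)^{1/2} + \energygap(t).
\]
In the regime $|c(T)-c(t)|\lesssim 1$ this yields $|c(T)-c(t)|\lesssim (T-t)^{1/4}\energygap(t)^{1/4}+\energygap(t)^{1/2}$, with the wrong power of $\energygap$; in the regime $|c(T)-c(t)|\gtrsim 1$ it yields $|c(T)-c(t)|\lesssim (T-t)^{1/2}\energygap(t)^{1/2}$, which is no improvement over the naive integration of $|\dot c|\lesssim D^{1/2}$. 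Neither ``absorbs'' into $\energygap(t)^{1/2}(T-t+1)^{1/4}$: for instance with $\energygap(t)\sim (T-t)^{-1}$ and $T-t$ large, both of your bounds are of order $1$ while the target is of order $(T-t)^{-1/4}$. The loss originates in the interpolation factor $\|u_x(T)-u_x(t)\|_{L^2}$, which is merely $\lesssim 1$ and carries no smallness.

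The paper avoids global $L^2$ entirely. It tests $v_{c(T)}-v_{c(t)}$ against a bump $\eta$ of width $L\gg 1$ covering both zeros, writes $\int\eta(v_{c(T)}-v_{c(t)})\,\dd x=\int\eta f_c(t)\,\dd x-\int\eta f_c(T)\,\dd x+\int\eta(u(T)-u(t))\,\dd x$, and controls the last term by $\|\eta_x\|_{L^2}\,\|u(T)-u(t)\|_{\dot H^{-1}}$ with $\|\eta_x\|_{L^2}\sim L^{-1/2}$. This produces
\[
|c(T)-c(t)| \;\lesssim\; \energygap(t)^{1/2}\Bigl(L^{1/2} + L^{-1/2}(T-t)^{1/2}\Bigr),
\]
and optimizing $L\sim(T-t)^{1/2}\vee 1$ gives exactly $\energygap(t)^{1/2}(T-t+1)^{1/4}$. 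The scale-adjustable factor $L^{-1/2}$ is precisely what your global interpolation cannot supply.
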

\begin{proof}[Proof of Lemma \ref{le:roughboundonc}]
	Let $\eta:\R\to[0,1]$ be a compactly supported cut-off function such that
	\begin{align}
\abs{\eta_{x}}\lesssim \frac{1}{L}\quad\text{and}\quad \eta\equiv 1 \text{ on }\left[-L+c(t)\wedge c(T), c(t)\vee c(T)+L \right].\notag
	\end{align}
We  write
	\begin{align*}
		2\abs{c(T)-c(t)}&=\abs*{\int v_{c}(T)-v_{c}(t)\,\dd x}\\
		&\leq \abs*{\int \eta \bigl(v_{c}(T)-v_{c}(t)\bigr)\,\dd x}+\abs*{\int (1-\eta) \bigl(v_{c}(T)-v_{c}(t)\bigr)\,\dd x}.
	\end{align*}
Since $L\gg 1$ yields
\begin{align*}
  \abs*{\int (1-\eta)(v_c(T)-v_c(t))\dd x}\ll\abs{c(T)-c(t)},
\end{align*}
we can absorb the second right-hand side term and deduce
	\begin{align*}
		\MoveEqLeft
		\abs{c(T)-c(t)}
		\leq \abs*{\int \eta \bigl(v_{c}(T)-v_{c}(t)\bigr)\,\dd x}.
	\end{align*}
	We now use the Cauchy-Schwarz inequality and duality to estimate
%Using H\"older's inequality, \eqref{eq:scale_E}, \eqref{diff} and duality, we obtain
	\begin{eqnarray}
		\abs{c(T)-c(t)}&\leq& \abs*{\int \eta \bigl(v_{c}(T)-v_{c}(t)\bigr)\,\dd x}\notag\\
		&\overset{\eqref{fc}}\leq &\abs*{\int \eta f_{c}(T)\,\dd x}
		+\abs*{\int \eta \bigl(u(T)-u(t)\bigr)\,\dd x}
		+\abs*{\int \eta f_{c}(t)\,\dd x}\notag\\
		&\overset{\eqref{eq:Edecreassing},\eqref{eq:scale_E}}\lesssim &\bigl(L\energygap(t)\bigr)^{\frac{1}{2}}
		+\left(\int \eta_{x}^{2}\,\dd x\; \norm{u(T,\cdot)-u(t,\cdot)}_{\dot{H}^{-1}}^2\right)^{\frac{1}{2}}\notag\\
		&\overset{\eqref{diff}}\lesssim &\bigl(L\energygap(t)\bigr)^{\frac{1}{2}}
		+\left(\frac{T-t}{L} \int_{t}^{T}D \,\dd s\right)^{\frac{1}{2}}\notag\\ %\label{eq:estbyD}\\
		&\overset{\eqref{diff}}\lesssim & \energygap^{\frac{1}{2}}(t)\left(L^{\frac{1}{2}}+\left(\frac{T-t}{L}\right)^{\frac{1}{2}}\right).\label{eq:tooptimiseinL}
	\end{eqnarray}
%	where in \eqref{eq:estbyD}, we have used
%	\begin{align*}
%		\partial_{x}^{-1}\bigl(u(T)-u(t)\bigl)=\int_{t}^{T}\partial_{x}^{-1}u_{s}\,\dd s
%		=\int_{t}^{T}\bigl(u_{xx}-G'(u)\bigr)_{x}\,\dd s.
%	\end{align*}
	We obtain \eqref{eq:roughboundc} via optimization of \eqref{eq:tooptimiseinL} in $L$ subject to $L\gg 1$.
\end{proof}

Using this information, we will introduce a simpler stand-in for $\V$. We start by introducing the ($T$-dependent) curves
\begin{align}
	\gamma_-(t):=c(T)-C_1(T-t+\Lambda)^{\frac{1}{4}}\quad\text{and}\quad
	\gamma_+(t):=c(T)+C_1 (T-t+\Lambda)^{\frac{1}{4}}.\label{gammadef}
\end{align}
Here $C_1\in(0,\infty)$ is a fixed constant chosen large enough with respect to the implicit constant in \eqref{eq:roughboundc} so that
\begin{align}
  c(t)-\gamma_-(t)\geq(T-t+\Lambda)^\frac{1}{4}\quad\text{and}\quad \gamma_+(t)-c(t)\geq (T-t+\Lambda)^\frac{1}{4},\label{eq:c-gamma}
\end{align}
for any $\Lambda$ (in the definition of $\gamma_-$) and we fix $\Lambda\in[1,\infty)$ to be the value (belonging to the constant $C_1$ above) from
Proposition \ref{le:regularity}.
We now introduce for given $T$ the functional
\begin{align}
	\Vt:=\int_{-\infty}^{\gamma_-(t)} \abs{u+1}\,\dd x+\int_{\gamma_+(t)}^{\infty} \abs{u-1}\,\dd x.\label{vtdef}
\end{align}
We can think of $\Vt$ as approximating the excess mass in the following sense.
\begin{lemma}[Link between $\V$ and $\Vt$]\label{le:VandVt}
For any $T\in(0,\infty) $, there holds
	\begin{align}\label{eq:VandVt}
		\Vt\lesssim \V+1\quad\text{for all }t\in [0,T]\qquad\text{and}\qquad
		\V(T)\lesssim \Vt(T)+1.
	\end{align}
\end{lemma}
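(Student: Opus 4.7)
The plan is to exploit the triangle inequality together with the well-known exponential convergence of the kink profile $v_c$ to its asymptotic values $\pm 1$. Indeed, Assumption~\ref{ass:G} (in particular $G''(\pm 1)>0$) produces a bound of the form
\begin{align*}
|v_c(x)+1|\lesssim e^{-\alpha(c-x)} \text{ for } x\leq c,\qquad |v_c(x)-1|\lesssim e^{-\alpha(x-c)} \text{ for } x\geq c,
\end{align*}
for some $\alpha>0$, so that these exponential tails integrate to $O(1)$ on either half-line regardless of where the cut-off is placed, as long as it lies on the ``correct side'' of $c$.

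For the first inequality $\Vt\lesssim V+1$, the ranges of integration in the definition \eqref{vtdef} lie on the correct side of $c(t)$ thanks to \eqref{eq:c-gamma}, which gives $\gamma_-(t)\leq c(t)\leq \gamma_+(t)$. I would then estimate pointwise $|u+1|\leq |u-v_c|+|v_c+1|=|f_c|+|v_c+1|$ on $(-\infty,\gamma_-(t))$ and symmetrically $|u-1|\leq |f_c|+|v_c-1|$ on $(\gamma_+(t),\infty)$. Integrating, the $|f_c|$ contributions sum to at most $V$ while the $|v_c\mp 1|$ tails are $O(1)$ by the exponential decay above.

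For the second inequality $V(T)\lesssim \Vt(T)+1$, at the final time the gap $\gamma_+(T)-\gamma_-(T)=2C_1\Lambda^{\frac{1}{4}}$ is a fixed order-one constant. I would split $V(T)$ into integrals over $(-\infty,\gamma_-(T))$, $(\gamma_-(T),\gamma_+(T))$, and $(\gamma_+(T),\infty)$. The middle piece is bounded by $|\gamma_+(T)-\gamma_-(T)|\cdot \norm{f_c}_\infty\lesssim 1$ via \eqref{linf} and \eqref{eq:Edecreassing}. On the two outer pieces the triangle inequality in the opposite direction $|f_c|=|u-v_{c(T)}|\leq |u+1|+|v_{c(T)}+1|$ (and its analogue on the right, using $|u-1|+|v_{c(T)}-1|$) collects the $|u\pm 1|$ contributions into $\Vt(T)$ and again picks up two exponential-tail integrals over half-lines with cut-off at distance $C_1\Lambda^{\frac{1}{4}}$ from $c(T)$, each of which is $O(1)$.

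Nothing in this argument is substantively difficult; the only ingredient to check is the exponential convergence of the kink, which follows from the standard linearization of $v''=G'(v)$ at the stable equilibria $\pm 1$. The key structural point that makes both directions work is simply that the curves $\gamma_\pm(t)$ are constructed to stay on the correct side of $c(t)$, so that the kink tails contribute only $O(1)$, while at $t=T$ they remain within $O(1)$ of $c(T)$, which absorbs the middle interval into the additive constant.
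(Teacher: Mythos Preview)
Your proof is correct and follows essentially the same approach as the paper: both directions are handled via the triangle inequality and the exponential tails of $v_c\mp 1$, using \eqref{eq:c-gamma} to guarantee that $\gamma_\pm$ sit on the correct side of $c$. The only cosmetic difference is in the middle interval at $t=T$: you bound $\int_{\gamma_-(T)}^{\gamma_+(T)}|f_c|\,\dd x$ by $2C_1\Lambda^{1/4}\norm{f_c}_\infty\lesssim 1$ via \eqref{linf}, whereas the paper uses Cauchy--Schwarz and \eqref{eq:scale_E} to bound it by $\E(T)^{1/2}\lesssim 1$; either works.
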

\begin{proof}[Proof of Lemma \ref{le:VandVt}]
	On the one hand, we use the triangle inequality to deduce
	\begin{align*}
		\Vt\leq \int \abs{u-v_{c}}\,\dd x+\int_{-\infty}^{\gamma_-}\abs{v_{c}+1}\,\dd x+\int_{\gamma_+}^{\infty}\abs{v_{c}-1}\,\dd x
		\overset{\eqref{eq:definitionV},\eqref{eq:c-gamma}}\lesssim \V+1.
	\end{align*}
	On the other hand, the second inequality in \eqref{eq:VandVt} follows from
	\begin{align*}
		\MoveEqLeft
		\V(T)\lesssim \Vt(T)+\int_{-\infty}^{\gamma_-(T)} \abs{v_{c}+1}\,\dd x+\int_{\gamma_+(T)}^{\infty} \abs{v_{c}-1}\,\dd x
		+\int_{\gamma_-(T)}^{\gamma_+(T)}\abs{f_{c}}\,\dd x\\
		&\overset{\eqref{eq:c-gamma},\eqref{eq:scale_E}}\lesssim \Vt(T)+1+\E(T)^{\frac{1}{2}}
		\overset{\eqref{eq:Edecreassing}}\lesssim \Vt(T)+1.
	\end{align*}
%	\begin{align*}
%		\int_{-\infty}^{\gamma}\zeta_{\gamma}(u+1)\,\dd x\Big|_{t=T}=\int_{-\infty}^{c(T)}\zeta_{\gamma} f_{c(T)}\,\dd x+\int_{-\infty}^{c(T)}\zeta_{\gamma}(v_{c(T)}+1)\,\dd x
%		\lesssim \int_{-\infty}^{c(T)}\zeta_{\gamma} f_{c(T)}\,\dd x+1
%	\end{align*}
\end{proof}

	We will establish the estimates on $(-\infty,\gamma_-)$; the estimates on $(\gamma_+,\infty)$ are analogous. Using duality, we write
	\begin{align*}
		\int_{-\infty}^{\gamma_-} \abs{u+1}\,\dd x=\sup\left\{\int \psi(u+1)\,\dd x \colon \psi:(-\infty,\gamma_-)\to\R\text{ such that }\norm{\psi}_\infty\leq 1\right\}.
	\end{align*}
%	We will show
%	\comment{what we show}
%	\begin{align*}
%		\int_{-\infty}^{\gamma} \zeta_{\gamma}(u+1)\,\dd x\lesssim
%	\end{align*}
	For simplicity of notation we drop the minus and write $\gamma$ instead of $\gamma_-$ for the rest of the proof. Let $\zeta$ satisfy \eqref{eq:zetagamma}.
	 We compute	
	\begin{eqnarray*}
	\lefteqn{	\frac{\dd}{\dd t}\int_{-\infty}^{\gamma }\zeta (u+1)\,\dd x}\\
&\overset{\zeta (\gamma)=0}=&\int_{-\infty}^{\gamma}\zeta_{  t}(u+1)+\zeta  u_{t}\,\dd x\\
		&\overset{\eqref{fc},\eqref{eq:ft},\eqref{eq:zetagamma}}=&\int_{-\infty}^{\gamma}\zeta_{  t}(v_{c}+1)+\left(-G''(1)\zeta_{xx}+\zeta_{xxxx}\right)f_{c}+\zeta  (G'(u)-G'(v_{c})-f_{cxx})_{xx}\,\dd x\\
		&\overset{\eqref{eq:zetagamma}}=&\int_{-\infty}^{\gamma}\zeta_{  t}(v_{c}+1)\,\dd x
		+\int_{-\infty}^{\gamma}\left(G'(u)-G'(v_{c})-G''(1)f_{c}\right)\zeta_{  xx}\,\dd x\\
		&&\quad -\left(G'(u)-G'(v_{c})-f_{cxx}\right)\zeta_{  x}\Big|_{x=\gamma}
%		&&\quad -\left(G'(u)-G'(v_{c})-G''(v_{c})f_{c}\right)\zeta_{\gamma x}\Big|_{x=\gamma}
		+f_{c}\zeta_{  xxx}\Big|_{x=\gamma}\\
		%&=&\int_{-\infty}^{\gamma}\zeta_{\gamma t}(v_{c}+1)\,\dd x
		%+\int_{-\infty}^{\gamma}\left(G'(u)-G'(v_{c})-G''(1)f_{c}\right)\zeta_{\gamma xx}\,\dd x\\
		%&&\quad -\left(G'(u)-G'(v_{c})-f_{cxx}\right)\zeta_{\gamma x}\Big|_{x=\gamma}
		%+f_{c}\zeta_{  xxx}\Big|_{x=\gamma}\\
		&=&I_{1}+I_{2}+I_{3}+I_{4}.
	\end{eqnarray*}
It suffices to show

\begin{align*}\displaystyle
  \int_0^T\left(I_1+\ldots+I_4\right)\dd t\lesssim
%   o(1)_{T\uparrow\infty}\,\Mb+\left(O(T^\frac{3}{4})+1\right)\left(\Mb^\frac{5}{6}+1\right).
  \begin{cases}
    \frac{\ln T}{T^\frac{1}{4}}\,\Vb +\Vb^\delta+1& \text{for }T\geq 2,\\
    T^\frac{3}{4}+1& \text{for all }T
  \end{cases}
\end{align*}
for some $\delta\in(0,1)$.
Then \eqref{eq:VandVt} implies \eqref{eq:Vbbounded}.
Again we refer to the cases $T\geq 2$ and general $T$ as ``large time'' and ``order one times,'' respectively.

	\uline{Term $I_1$}:
		For the first term, we begin by considering $T\geq 2$. We estimate
		\begin{align*}
			\abs*{\int_{-\infty}^{\gamma}\zeta_{  t}(v_{c}+1)\,\dd x}\lesssim \norm{\zeta_t}_\infty \int_{-\infty}^{\gamma}\abs*{v_{c}+1}\,\dd x
			\overset{\eqref{zetagxx},\eqref{zetagxxxx}}\lesssim \frac{\exp\left(-\frac{c-\gamma}{C}\right)}{T-t}
			\overset{\eqref{eq:c-gamma}}\lesssim \frac{\exp\left(-\frac{(T-t)^{\frac{1}{4}}}{C}\right)}{T-t},
		\end{align*}
		where $C$ is a generic, universal constant (depending only on $G$) and we have used exponential tails of $v_{cx}$ order one away from $c$ and the uniform distance between $c$ and $\gamma_-$ afforded by \eqref{eq:c-gamma}.
		Integrating in time, we obtain
		\begin{align*}
			\int_{0}^{T-1}\abs*{\int_{-\infty}^{\gamma}\zeta_{  t}(v_{c}+1)\,\dd x}\,\dd t\lesssim 1.
		\end{align*}
		For the terminal layer $[T-1,T]$, we note
		\begin{eqnarray}
			\abs*{\int_{-\infty}^{\gamma}\zeta_{  t}(v_{c}+1)\,\dd x}
&\overset{\eqref{eq:zetagamma}}=&\abs*{\int_{-\infty}^{\gamma}\left(-G''(1)\zeta_{  xx}+\zeta_{  xxxx}\right)(v_{c}+1)\,\dd x}\label{413}\\
&=&\abs*{\int_{-\infty}^{\gamma}\left(-G''(1)\zeta_{xx}\right)(v_{c}+1)-\zeta_{xxx}v_{cx}\,\dd x +\zeta_{xxx}(v_c+1)\Big|_{x=\gamma}}\notag\\
			&\overset{\eqref{zetagxx},\eqref{zetagxxx}}\lesssim & \frac{1}{(T-t)^{\frac{1}{2}}}+\frac{1}{(T-t)^{\frac{3}{4}}},\label{eq:intzetatest}
		\end{eqnarray}
		from which we deduce
		\begin{align*}
			\int_{T-1}^{T}\abs*{\int_{-\infty}^{\gamma}\zeta_{  t}(v_{c}+1)\,\dd x}\,\dd t\lesssim 1.			
		\end{align*}
		For order one times, we use \eqref{413} to obtain
		\begin{align*}
			\int_{0}^{T}\abs*{\int_{-\infty}^{\gamma}\zeta_{  t}(v_{c}+1)\,\dd x}\,\dd t\lesssim T^{\frac{1}{2}}+T^{\frac{1}{4}}.
		\end{align*}

	\uline{Term $I_2$}:
		We estimate as in \eqref{starone} and \eqref{startwo} above to obtain
		\begin{eqnarray}
			\abs{I_2}	 &=&\abs*{\int_{-\infty}^{\gamma}\Bigl(G'(u)-G'(v_{c})-G''(v_{c})f_{c}+\bigl(G''(v_{c})-G''(1)\bigr)f_{c}\Bigr)\zeta_{  xx}\,\dd x}\notag\\
&\overset{\eqref{eq:scale_E}, \eqref{linf}}\lesssim& \left(\energygap +\energygap^{\frac{1}{2}}\right)\norm{\zeta_{xx}}_{\infty}.\label{kern}
		\end{eqnarray}
		For $T\geq 2$, we use the bound
		\begin{eqnarray*}
		\int_0^T\abs{I_2}\,\dd t&\overset{\eqref{eq:Edecreassing}, \eqref{eq:decay_E},\eqref{zetagxx} }\lesssim& \Vb	 \int_{0}^{T}\frac{1}{t^{\frac{1}{4}}}\left(\frac{1}{T-t}\wedge\frac{1}{(T-t)^{\frac{1}{2}}}\right)\,\dd t\\
			&\lesssim&
\Vb\left(\int_{0}^{T-1}\frac{1}{t^{\frac{1}{4}}}\frac{1}{T-t}\,\dd t
%\int_{0}^{\frac{T}{2}}\frac{1}{t^{\frac{1}{4}}}\frac{1}{T}\,\dd t
%			+\int_{\frac{T}{2}}^{T-1}\frac{1}{T^{\frac{1}{4}}}\frac{1}{T-t}\,\dd t
+\int_{T-1}^{T}\frac{1}{T^{\frac{1}{4}}}\frac{1}{(T-t)^{\frac{1}{2}}}\,\dd t\right)
			\lesssim \Vb\frac{\ln T}{T^{\frac{1}{4}}}.
%			\overset{\eqref{eq:decay_E}}\lesssim \Vb\int_{0}^{T}\frac{1}{t^{\frac{1}{4}}}\frac{1}{(T-t)^{\frac{7}{8}}}\,\dd t\lesssim \frac{\Vb}{T^{\frac{1}{8}}}.
		\end{eqnarray*}
		For order one times, we use the simpler estimate
\begin{align*}
\int_0^T\abs{I_2}\,\dd t\overset{\eqref{kern}}\lesssim  \int_0^T \left(\energygap +\energygap^{\frac{1}{2}}\right)\norm{\zeta_{xx}}_{\infty}\,\dd t\overset{\eqref{eq:Edecreassing},\eqref{zetagxx}}\lesssim T^\frac{1}{2}.
\end{align*}
%		\begin{align*}
%			\int_{0}^{T}\energygap^{\frac{1}{2}}\norm{\zeta_{xx}}_{\infty}\,\dd t
%			\lesssim \E_{0}^{\frac{1}{2}}\int_{0}^{T}\frac{1}{(T-t)^{\frac{1}{2}}}\,\dd t
%			\lesssim \E_{0}^{\frac{1}{2}}T^{\frac{1}{2}}\lesssim 1.
%		\end{align*}

	\uline{Term $I_3$}:
		For this first boundary term, we estimate
\begin{align}
			\abs{I_3}
			\lesssim \left(\abs{f_{c}}+\abs{f_{cxx}}\right)\Big|_{x=\gamma}\norm{\zeta_{x}}_{\infty}
			\overset{\eqref{zetagx}}\lesssim \left(\abs{f_{c}}+\abs{f_{cxx}}\right)\Big|_{x=\gamma}
\left(\frac{1}{(T-t)^{\frac{1}{2}}}\wedge\frac{1}{(T-t)^{\frac{1}{4}}}\right).\label{gammad}
		\end{align}
For order one times, we use the simplistic estimates
\begin{eqnarray*}
\int_0^T\norm{f_c}_\infty \frac{1}{(T-t)^\frac{1}{4}}\,\dd t & \overset{\eqref{linf}}\lesssim & T^\frac{3}{4}\lesssim 1,\\
\int_0^T\norm{f_{cxx}}_\infty\frac{1}{(T-t)^\frac{1}{4}}\,\dd t&\overset{\eqref{linf}}\lesssim &\int_0^T D^\frac{1}{2}\frac{1}{(T-t)^\frac{1}{4}}\dd t\overset{\eqref{eq:Edecreassing}}\lesssim T^\frac{1}{4}.
\end{eqnarray*}

For  $T\geq 2$, the same estimates suffice for the terminal layer $(T-1,T)$. For the integral over $(0,T-1)$, we argue as in \eqref{fcsmall}, now using \eqref{fcD}, \eqref{eq:roughboundc}, and \eqref{gammadef} to derive
\begin{align*}
  \abs{f_c(\gamma)}\leq \abs{f_c(\gamma)-f_c(c)}+\abs{f_c(c)}
  \overset{\eqref{fcD}}\lesssim\left(\abs{c-\gamma}D\right)^\frac{1}{2}+D^\frac{1}{2}
  \overset{\eqref{eq:roughboundc}, \eqref{gammadef}} \lesssim (T-t+\Lambda)^\frac{1}{8}D^\frac{1}{2}.
\end{align*}
Again recalling \eqref{linf}, we conclude
\begin{align*}
  \abs{f_c(\gamma)}+\abs{f_{cxx}(\gamma)}\lesssim (T-t+\Lambda)^\frac{1}{8}D^\frac{1}{2}.
\end{align*}
We then integrate (cf. \eqref{gammad}):
\begin{align}
\int_0^{T-1} (T-t+\Lambda)^\frac{1}{8}\left(\frac{1}{(T-t)^{\frac{1}{2}}}\wedge\frac{1}{(T-t)^{\frac{1}{4}}}\right)D^\frac{1}{2}\,\dd t
  &\lesssim \int_0^{T-1}\frac{1}{(T-t)^{\frac{3}{8}}}\,D^\frac{1}{2}\,\dd t. \label{object}
\end{align}
Using Lemma \ref{l:dissint2} yields
\begin{align}
\int_0^{T-1}\abs{I_3}\,\dd t\lesssim \Vb^\delta\notag
%\frac{\Vb^\exponent}{T^\exponentt}\qquad\text{on }(0,T-1),
\end{align}
where $\delta>0$ is any exponent just short of the ``formal''  $\delta=1$ rate. (For instance integrating \eqref{object} using H\"older exponent $\frac{3}{2}$ for the term $D^\frac{1}{2}$ yields $\delta=\frac{2}{3}$.)

%It remains to consider the $f_{c xx}$ term in \eqref{gammad}. Here we use $\norm{f_{c xx}}_\infty\lesssim D^\frac{1}{2}$ (cf.\ \eqref{linf}) throughout and again use  Lemma \ref{l:dissint2}; the calculation is similar and the terms are higher order.

%applying  $T\lesssim 1$ we use
%		\begin{align*}
%			\MoveEqLeft
%			\abs*{\left(G'(u)-G'(v_{c})-G''(v_{c})f_{c}\right)\zeta_{\gamma x}\Big|_{x=\gamma}}
%			\lesssim \left(\norm{f_{c}}_{\infty}\right)^{2}\sup\abs{\zeta_{\gamma x}}\\
%			&\lesssim \left(\E\wedge\left(\E\D\right)^{\frac{1}{2}}\right)\left(\frac{1}{(T-t)^{\frac{1}{2}}}\wedge\frac{1}{(T-t)^{\frac{1}{4}}}\right).
%		\end{align*}
%		For $T\gg1 $, we integrate in time and get
%		\begin{align*}
%			\MoveEqLeft
%			 \int_{0}^{T}\left(\E\D\right)^{\frac{1}{2}}\left(\frac{1}{(T-t)^{\frac{1}{2}}}\wedge\frac{1}{(T-t)^{\frac{1}{4}}}\right)\,\dd t\\
%			&\overset{\eqref{eq:decay_E}}\lesssim \Vb \left(\int_{0}^{T}\D\,\dd t\int_{0}^{T}\frac{1}{t^{\frac{1}{2}}}\left(\frac{1}{T-t}\wedge\frac{1}{(T-t)^{\frac{1}{2}}}\right) \,\dd t\right)^{\frac{1}{2}}
%			\lesssim \frac{\Vb \ln^{\frac{1}{2}} T}{T^{\frac{1}{4}}}.
%%			&\overset{\eqref{eq:decay_E}}\lesssim \Vb \left(\int_{0}^{T}\D\,\dd t\int_{0}^{T}\frac{1}{t^{\frac{1}{2}}}\left(\frac{1}{T-t}\wedge\frac{1}{(T-t)^{\frac{1}{2}}}\right) \,\dd t\right)^{\frac{1}{2}}
%%			\lesssim \frac{\Vb}{T^{\frac{1}{4}}}.
%		\end{align*}
%		For $T\lesssim 1$, we estimate
%		\begin{align*}
%			\MoveEqLeft
%			\int_{0}^{T} \E\left(\frac{1}{(T-t)^{\frac{1}{2}}}\wedge\frac{1}{(T-t)^{\frac{1}{4}}}\right)\,\dd t
%			\leq \E_{0} T^{\frac{1}{2}}\lesssim 1.
%		\end{align*}

	\uline{Term $I_4$}:
		For this second boundary term, we use
		\begin{align*}
			\abs*{f_{c}\zeta_{ xxx}\Big|_{x=\gamma}}%\lesssim \norm{f_{c}}_{\infty}\sup\abs{\zeta_{ xxx}}
			\overset{\eqref{linf}}\lesssim\norm{\zeta_{xxx}}_{\infty}\E^{\frac{1}{2}}.
%			\lesssim \left(\frac{1}{(T-t)^{\frac{3}{2}}}\wedge\frac{1}{(T-t)^{\frac{3}{4}}}\right),
		\end{align*}
		We first consider the case $T\geq 2$.
		Integrating in time, we estimate

		\begin{align*}
			\int_{0}^{T}\norm{\zeta_{xxx}}_{\infty}\E^{\frac{1}{2}}\,\dd t
		  	\overset{\eqref{zetagxxx},\eqref{eq:decay_E}}\lesssim \Vb\int_0^{T}\frac{1}{T-t}\frac{1}{t^{\frac{1}{4}}}\,\dd t
			\lesssim \Vb \frac{\ln T}{T^{\frac{1}{4}}}.
		\end{align*}

%		\begin{align*}
%			\int_{0}^{\frac{T}{2}}\norm{\zeta_{xxx}}_{\infty}\E^{\frac{1}{2}}\,\dd t
%		  	\overset{\eqref{zetagxxx},\eqref{eq:decay_E}}\lesssim \Vb\int_0^{\frac{T}{2}}\frac{1}{(T-t)^{\frac{3}{2}}}\frac{1}{t^{\frac{1}{4}}}\,\dd t
%			\lesssim \Vb \frac{1}{T^{\frac{3}{4}}}
%		\end{align*}
%		and
%		\begin{align*}
%			\int_{\frac{T}{2}}^{T-1}\norm{\zeta_{xxx}}_{\infty}\E^{\frac{1}{2}}\,\dd t
%		  	\overset{\eqref{zetagxxx},\eqref{eq:decay_E}}\lesssim \Vb\int_{\frac{T}{2}}^{T-1}\frac{1}{(T-t)^{\frac{3}{2}}}\frac{1}{t^{\frac{1}{4}}}\,\dd t
%			\overset{\eqref{eq:T3/4t1/2}}\lesssim \Vb \frac{1}{T^{\frac{3}{4}}}.
%		\end{align*}
%		For the terminal layer, we obtain
%		\begin{align*}
%			\int_{T-1}^{T}\norm{\zeta_{xxx}}_{\infty}\E^{\frac{1}{2}}\,\dd t
%			\overset{\eqref{zetagxxx},\eqref{eq:decay_E}}\lesssim \Vb\int_{T-1}^{T}\frac{1}{(T-t)^{\frac{3}{4}}}\frac{1}{t^{\frac{1}{4}}}\,\dd t
%			\lesssim \Vb\frac{1}{T^{\frac{1}{4}}}.
%		\end{align*}
		In case $T\lesssim 1$, we estimate
		\begin{align*}
			\int_0^{T}\norm{\zeta_{xxx}}_{\infty}\E^{\frac{1}{2}}\,\dd t
			\overset{\eqref{zetagxxx},\eqref{eq:Edecreassing}}\lesssim \int_0^{T}\frac{1}{(T-t)^{\frac{3}{4}}}\,\dd t
			\lesssim T^{\frac{1}{4}}\lesssim 1.
		\end{align*}
%		so that for $T\geq 2$, we have that
%		\begin{align*}
%			\int_{0}^{T}\abs*{f_{c}\zeta_{\gamma xxx}\Big|_{x=\gamma}}\,\dd t\lesssim 1.
%		\end{align*}
%		so that for both $T\geq 2$ and $T\lesssim 1$, we have that
%		\begin{align*}
%			\int_{0}^{T}\abs*{f_{c}\zeta_{\gamma xxx}\Big|_{x=\gamma}}\,\dd t\lesssim 1.
%		\end{align*}
\end{proof}

\section{Proofs of parabolic estimates}\label{S:paraproof}
In this section we prove Propositions~\ref{paraboliczeta} and~\ref{le:regularity}. The former is standard but we include it for completeness. The latter requires estimates on a domain with moving boundary; we refer to the comments in Section~\ref{se:parabolic_estimates}.

We will always assume the necessary spatial and temporal decay conditions on the solution and data so that the solution of the partial differential equation is uniquely determined.

\subsection{Proof of the parabolic estimates for  $\M$}\label{subs:parabolicM}

In this subsection we will prove Proposition~\ref{paraboliczeta}.
We set $G''(1)=1$ for notational simplicity.
\begin{proof}[Proof of Proposition~\ref{paraboliczeta}]
	We begin with an odd reflection of $\g$ and then solve the problem on $\R$ via Fourier transform in space.
	The Fourier transform $\hat \zeta$ satisfies
	\begin{align*}
		\hat\zeta_{t}=(k^{2}+k^{4})\hat\zeta\quad\text{and hence}\quad
		\hat\zeta =\hat \g(k)e^{-(k^{2}+k^{4})(T-t)}.
	\end{align*}
	The solution  $\zeta$ can be expressed as
	\begin{align*}
		\zeta(t,x)=\g(x)\ast H^{(2)}(T-t,x)\ast H^{(4)}(T-t,x)
	\end{align*}
	where $H^{(2)}$ is the (second order) heat kernel and $H^{(4)}$ the biharmonic heat kernel, i.e. the fundamental solution of $u_{t}=-u_{xxxx}$.
The heat kernel
	\begin{align}\notag
		H^{(2)}(t,x)= \frac{1}{(4\Pi t)^{\frac{1}{2}}} \exp\left(-\frac{x^{2}}{4\,t}\right),
	\end{align}
	yields the estimates
	\begin{align}\label{eq:Hestimates}
		\norm{H^{(2)}}_{1}\lesssim 1,\,\,\,
		\norm{H^{(2)}_{x}}_{1}\lesssim \frac{1}{t^{\frac{1}{2}}},\,\,\,
		\norm{H^{(2)}_{xx}}_{1}\lesssim \frac{1}{t},\,\,\,
		\norm{H^{(2)}_{xxx}}_{1}\lesssim \frac{1}{t^{\frac{3}{2}}}\,\,\,\text{and}\,\,\,
		\norm{H^{(2)}_{xxxx}}_{1}\lesssim \frac{1}{t^{2}},
	\end{align}
where $\norm{\cdot}_1$ denotes the $L^1$ norm.
	For the biharmonic heat kernel, one can read off from the Fourier transform that
	\begin{align*}
		H^{(4)}(t,x)= \frac{1}{t^{\frac{1}{4}}} H^{(4)}\left(1,\frac{x}{t^{\frac{1}{4}}}\right)
	\end{align*}
	for a Schwartz function $H^{(4)}(1,\cdot)$.
	This implies
	\begin{align}\label{eq:Bestimates}
		\norm{H^{(4)}}_{1}\lesssim 1,\quad
		\norm{H^{(4)}_{x}}_{1}\lesssim \frac{1}{t^{\frac{1}{4}}},\quad
		\norm{H^{(4)}_{xx}}_{1}\lesssim \frac{1}{t^{\frac{1}{2}}},\quad
		\norm{H^{(4)}_{xxx}}_{1}\lesssim \frac{1}{t^{\frac{3}{4}}}\quad\text{and}\quad
		\norm{H^{(4)}_{xxxx}}_{1}\lesssim \frac{1}{t}.
	\end{align}
	Using $\norm{\g_{x}}_{\infty}\leq 1$ and the estimates \eqref{eq:Hestimates} and \eqref{eq:Bestimates} for $H^{(2)}$ and $H^{(4)}$, we estimate
	\begin{align}\label{eq:zetaxsup}
		\norm{\zeta_{x}}_{\infty}\leq  \norm{\g_{x}}_{\infty}\,\norm{H^{(2)}\ast H^{(4)}}_{1} \leq \norm{H^{(2)}}_{1}\,\norm{H^{(4)}}_{1}\lesssim 1.
	\end{align}
	By putting further derivatives on $H^{(2)}$ or $H^{(4)}$, we obtain \eqref{zetaxx} to \eqref{zetaxxxxx}.

	For \eqref{A} we note that from \eqref{eq:zetaequa} and the previous estimates it follows that
	\begin{align*}
		\abs{\zeta_{t}}\lesssim \frac{1}{(T-t)^{\frac{1}{2}}}+\frac{1}{(T-t)^{\frac{3}{4}}}\qquad\text{and}\qquad
		\abs{\zeta_{tx}}\lesssim \frac{1}{T-t}.
	\end{align*}
	Using $\zeta(x=0)=0$, we obtain
	\begin{align*}
		\int_{-\infty}^{0}\zeta_{t}(v_{c}-v)\,\dd x\lesssim
		\begin{cases}
			\frac{1}{T-t}\int_{-\infty}^{\infty}\abs{x}\abs{v_{c}-v}\,\dd x\\
			 \left(\frac{1}{(T-t)^{\frac{1}{2}}}+\frac{1}{(T-t)^{\frac{3}{4}}}\right)\int_{-\infty}^{\infty}\abs{v_{c}-v}\,\dd x.
		\end{cases}
	\end{align*}
On the one hand, $\int\abs{v_c-v}\,\dd x\lesssim \abs{c}$ follows readily from the properties of $v$. On the other hand $\int \abs{x}\abs{v_c-v}\,\dd x\lesssim c^2+\abs{c}$ follows by considering the cases $c\gtrsim 1$ and $c\ll 1$ (and the properties of $v$).
\end{proof}

\subsection{Proof of the parabolic estimates for $V$}\label{subsection:parabolicvariable}
In this subsection we prove Proposition \ref{le:regularity}. It is convenient to introduce the following notation.

\begin{notation}
We will set $G''(1)=1$ for simplicity. Also we will use $\norm{\cdot}$ to denote $\norm{\cdot}_\infty$ and define $$\norm{u,v}:=\max\{\norm{u},\norm{v}\}.$$
In addition we will denote by $$u_{kx}\quad\text{for}\quad k\in\{5,\,6,\,7,\,8\}$$ the $k$-th partial derivative of $u$ with respect to $x$. Finally, we set
\begin{align}
\norm{f}_{t\geq \tau}:=\sup_{x,t\geq\tau} \abs{f(t,x)}.\label{infdef}
\end{align}
\end{notation}

Our approach is to transform \eqref{eq:zetagamma} onto the half-line and treat the resulting advection term perturbatively, i.e., as a right-hand side term, cf.~\eqref{eq:new_eq_zeta}. As usual, one splits the transformed problem into one taking care of the initial data (Lemma~\ref{le:estv}) and one taking care of the right-hand side (Lemma~\ref{le:estu}).

\begin{lemma}\label{le:estv}
	Let $v$ satisfy
	\begin{align}\label{eq:eqv}
		\left\{
		\begin{aligned}
			v_{t}-v_{xx}+v_{xxxx}&=0			&&\text{on }	&&t\in (0,\infty),	&&x\in (0,\infty),\\
			v=v_{xx}&=0								&&\text{for }	&&t\in (0,\infty),	&& x=0,\\
			v&=\g								&&\text{for }	&&t=0,				&& x\in [0,\infty).
		\end{aligned}
		\right.
	\end{align}
	There holds
	\begin{align}\label{eq:estiamteforv2}
		 \sup_{\T>0}\left(\T^{2}\norm{v_{8x},v_{6x},v_{txxxx},v_{xxxx},v_{txx},v_{tt}}_{t\geq \tau}+\T\norm{v_{xxxx},v_{xx},v_{t}}_{t\geq \tau}+\norm{v}_{t\geq \tau}\right)\lesssim \norm{\g}.
	\end{align}
%	\begin{align}\label{eq:estiamteforv}
%		\sup_{\T>0}\left(\norm{v}+\T^{\frac{\alpha}{2}}[v]_{\alpha}+
%		\sum_{k=1}^{4}\left(\left(\T^{\frac{k}{2}}\vee \T^{\frac{k}{4}}\right)\norm*{v^{(k)}}_{t\geq \T}
%		+\T^{\frac{k}{2}+\frac{\alpha}{2}}\left[v^{(k)}\right]_{\alpha,t\geq\T}\right)\right)
%		\lesssim \norm{\g}.
%	\end{align}
\end{lemma}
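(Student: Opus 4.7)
The plan is to reduce the initial–boundary-value problem \eqref{eq:eqv} to a Cauchy problem on $\R$ by odd reflection, and then use the convolution representation already exploited in the proof of Proposition~\ref{paraboliczeta}, together with the $L^1$-bounds \eqref{eq:Hestimates} and \eqref{eq:Bestimates}. First I extend $\g$ oddly across $x=0$. Since both $v$ and $v_{xx}$ vanish at $x=0$ and since $\partial_t-\partial_{xx}+\partial_{xxxx}$ commutes with the reflection $x\mapsto -x$ and sends odd functions to odd functions, the unique bounded solution $\tilde v$ on $(0,\infty)\times\R$ with odd-reflected initial datum $\tilde\g$ remains odd in $x$ for every $t>0$ and therefore automatically satisfies $\tilde v(t,0)=\tilde v_{xx}(t,0)=0$. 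By uniqueness, $v=\tilde v|_{x\ge 0}$ and $\norm{\tilde\g}_\infty=\norm{\g}_\infty$.

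Second, because the Fourier symbol $e^{-(k^2+k^4)t}$ factors as $e^{-k^2t}\,e^{-k^4t}$, the solution admits the representation
\[
\tilde v(t,\cdot)=\tilde\g\ast H^{(2)}(t,\cdot)\ast H^{(4)}(t,\cdot).
\]
Young's inequality applied to an arbitrary split $j=a+b$ with $a,b\ge 0$ gives
\[
\norm{\partial_x^j v(t,\cdot)}_\infty
\le \norm{\g}_\infty\, \norm{H^{(2)}_{ax}(t)}_1\,\norm{H^{(4)}_{bx}(t)}_1
\lesssim \norm{\g}_\infty\, t^{-a/2-b/4}.
\]
Minimising over $a+b=j$ (all derivatives on $H^{(4)}$ for $t\le 1$, all on $H^{(2)}$ for $t\ge 1$) produces the sharp bound
\[
\norm{\partial_x^j v(t,\cdot)}_\infty \lesssim (t^{-j/2}\wedge t^{-j/4})\,\norm{\g}_\infty.
\]
Mixed space–time derivatives are then reduced to purely spatial ones by repeated substitution of the equation: $v_t=v_{xx}-v_{xxxx}$, $v_{txx}=v_{xxxx}-v_{6x}$, $v_{txxxx}=v_{6x}-v_{8x}$, and $v_{tt}=v_{xxxx}-2v_{6x}+v_{8x}$, so that each is dominated by the slowest-decaying spatial derivative involved.

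Finally, to assemble \eqref{eq:estiamteforv2} I read off the rate for each listed quantity and take the supremum over $t\ge\tau$. Derivatives of spatial order $2$ decay like $\tau^{-1/2}\wedge\tau^{-1}$ and fit the $\tau$-block; spatial orders $6$ and $8$ decay like $\tau^{-3/2}\wedge\tau^{-3}$ and $\tau^{-2}\wedge\tau^{-4}$ respectively, and hence satisfy the $\tau^2$-block, and similarly the time-differentiated quantities $v_{txx}$, $v_{txxxx}$, $v_{tt}$ reduce to these by the PDE. The fourth-order spatial term $v_{xxxx}$ decays like $\tau^{-1}\wedge\tau^{-2}$, which simultaneously satisfies both a $\tau$- and a $\tau^2$-bound, explaining its appearance in both groups. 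The only obstacle is bookkeeping—picking, for each of the many terms, a split of derivatives between $H^{(2)}$ and $H^{(4)}$ that matches the prescribed power of $\tau$ in \eqref{eq:estiamteforv2}. The essential structural point is that odd reflection converts the half-line IBVP into a pure Cauchy problem for which every bound is an immediate consequence of \eqref{eq:Hestimates} and \eqref{eq:Bestimates}.
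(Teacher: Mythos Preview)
Your proposal is correct and follows essentially the same approach as the paper: odd reflection to the whole line, the convolution representation $\tilde v=\tilde\g\ast H^{(2)}(t,\cdot)\ast H^{(4)}(t,\cdot)$, Young's inequality with the $L^1$-bounds on the kernels to obtain $\norm{\partial_x^j v}_\infty\lesssim(t^{-j/2}\wedge t^{-j/4})\norm{\g}_\infty$, and then substitution of the equation to reduce the mixed time--space derivatives to purely spatial ones. The only remark is that for $j>4$ you are tacitly using the kernel bounds beyond the four derivatives listed in \eqref{eq:Hestimates}--\eqref{eq:Bestimates}; this is immediate from the scaling form \eqref{B2} since $H^{(2)}(1,\cdot)$ and $H^{(4)}(1,\cdot)$ are Schwartz, and the paper makes the same implicit extension.
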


For the proof of Lemma~\ref{le:estu}, which is optimal in terms of scaling, the simple semigroup estimates of Proposition~\ref{paraboliczeta} or Lemma~\ref{le:estv} are not sufficient, and we appeal instead to Schauder theory in the form of Proposition~\ref{prop:Schauder}, below.

\begin{lemma}\label{le:estu}
	Let $g(x=0 )=0$ and let $u$ satisfy
	\begin{align}\label{eq:equ}
		\left\{
		\begin{aligned}
			u_{t}-u_{xx}+u_{xxxx}&=g_{x}		&&\text{on }	&&t\in (0,\infty),	&&x\in (0,\infty),\\
			u=u_{xx}&=0								&&\text{for }	&&t\in (0,\infty),	&& x=0,\\
			u&=0								&&\text{for }	&&t=0,				&& x\in [0,\infty).
		\end{aligned}
		\right.
	\end{align}
	There holds
	\begin{align}
		\begin{split}\label{eq:newtoshowu}
			\MoveEqLeft
			\sup_{\tau>0}\left(\T\norm{u_{xxxx},u_{xx},u_{t}}_{t\geq \tau}+\norm{u}_{t\geq \tau}\right)\\
			&\lesssim \sup_{\T>0}\left(\T^{\frac{1}{2}}\wedge\T^{\frac{3}{4}}\right)\left(\T\norm{g_{xxxx},g_{xx},g_{t}}_{t\geq \T}+\norm{g}_{t\geq\tau}\right).
		\end{split}
	\end{align}
\end{lemma}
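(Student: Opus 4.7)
The plan is to combine a Duhamel representation of $u$ with the Schauder estimates of Proposition~\ref{prop:Schauder}: the Duhamel formula together with an integration by parts handles the $L^\infty$ bound on $u$ via $L^1$-type bounds on the half-line Green's function, while Schauder supplies the higher-derivative bounds, for which pure semigroup estimates fail because the singularity in $(t-s)$ becomes non-integrable. By linearity, we may normalize the right-hand side of \eqref{eq:newtoshowu} to one, so that
\begin{align*}
\norm{g}_{t\geq\sigma}+\sigma\,\norm{g_{xxxx},g_{xx},g_t}_{t\geq\sigma}\lesssim \frac{1}{\sigma^{1/2}\wedge\sigma^{3/4}}\qquad\text{for every }\sigma>0.
\end{align*}

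First, I would establish $\norm{u}_{t\geq\tau}\lesssim 1$ via the Duhamel representation
\begin{align*}
u(t,x)=\int_0^t\int_0^\infty K(t-s,x,y)\,g_x(s,y)\,\dd y\,\dd s,
\end{align*}
where $K$ is the Green's function of $\partial_t-\partial_x^2+\partial_x^4$ on the half-line with $K|_{y=0}=K_{yy}|_{y=0}=0$. Integration by parts in $y$, justified by $g(s,0)=0$, transfers $\partial_y$ onto $K$; the $L^1_y$-type bound $\norm{K_y(s,\cdot)}_{L^1_y}\lesssim s^{-1/4}\wedge s^{-1/2}$ (analogous to \eqref{eq:Hestimates}--\eqref{eq:Bestimates}) combined with the pointwise control on $\norm{g(s)}_\infty$ above yields $\norm{u(t,\cdot)}_\infty\lesssim 1$ uniformly in $t$ after time integration.

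For the bound $\tau\,\norm{u_{xxxx},u_{xx},u_t}_{t\geq\tau}$, I would fix a target time $t\geq\tau$ and split $u=u_1+u_2$ on the window $[t/2,t]$: $u_1$ solves the homogeneous equation with initial datum $u(t/2,\cdot)$ at time $t/2$, while $u_2$ solves the inhomogeneous equation with vanishing datum at $t/2$ and forcing $g_x$ on $[t/2,t]$. For $u_1$, a time shift and Lemma~\ref{le:estv} applied with initial datum $u(t/2,\cdot)$, whose $L^\infty$ norm is $\lesssim 1$ by the previous step, yield the required $1/t$ bound on the derivatives. For $u_2$, Proposition~\ref{prop:Schauder} on $[t/2,\infty)$ supplies parabolic H\"older bounds on $(u_2)_{xxxx}$, $(u_2)_{xx}$, $(u_2)_t$ in terms of the H\"older seminorm $[g_x]_\alpha$ on the same time regime; this seminorm is in turn obtained by interpolation in the anisotropic metric of \eqref{eq:hoelder_semi_norm} between $\norm{g}$ and $\norm{g_{xx},g_{xxxx},g_t}$. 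The resulting H\"older bounds then lift to $L^\infty$ bounds at time $t$ via the vanishing of $u_2$ and its derivatives at $t/2$, with the sharp factor $1/t$.

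The main technical obstacle will be this Schauder step: (a) verifying that $[g_x]_\alpha$ on $\{t\geq t/2\}$ inherits the correct $1/t$-scaling from the prescribed bounds on $g$ and its higher derivatives through interpolation in the Carnot--Carath\'eodory metric underlying \eqref{eq:hoelder_semi_norm}, and (b) converting the resulting parabolic H\"older bound on $(u_2)_{xxxx}$ etc.\ into a pointwise bound with the sharp factor $1/t$. Both are bookkeeping in the anisotropic scaling, but they hinge decisively on the precise form of Proposition~\ref{prop:Schauder} established in the appendix. Once these are in hand, the $u_1$ and $u_2$ estimates combine to close \eqref{eq:newtoshowu}.
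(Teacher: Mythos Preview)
Your overall strategy---Duhamel for the $L^\infty$ bound on $u$, then Schauder (Proposition~\ref{prop:Schauder}) for the higher derivatives---matches the paper's. For the lower-order step the paper uses a small variant of your integration by parts: it extends $g$ evenly to the line, solves $w_t-w_{xx}+w_{xxxx}=g$ there, and observes that $u=w_x$ (odd, hence meeting the boundary conditions), so only whole-line kernel bounds \eqref{eq:Hestimates}--\eqref{eq:Bestimates} are needed. Your direct approach via the half-line Green's function $K(t,x,y)=H(t,x-y)-H(t,x+y)$ is equivalent.

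There is, however, a genuine gap in your Schauder step. Proposition~\ref{prop:Schauder} is formulated on $t\in\R$; to apply it to your $u_2$ you must extend by zero to $t<t/2$, and then the forcing becomes $\chi_{\{t>t/2\}}\,g_x$, which jumps at $t=t/2$ and has infinite H\"older seminorm in \eqref{eq:hoelder_semi_norm}. The paper avoids this by localizing with a \emph{smooth} temporal cutoff rather than a sharp split: with $\eta(t)=\hat\eta(t/\tau)$ vanishing for $t\le\tau$ and equal to $1$ for $t\ge 2\tau$, the function $\tilde u:=\eta u$ satisfies
\[
\tilde u_t-\tilde u_{xx}+\tilde u_{xxxx}=\dot\eta\,u+\eta\,g_x,
\]
whose $[\cdot]_\alpha$-seminorm is controlled by $\tau^{-1}[u]_{\alpha,t\ge\tau}+\tau^{-1-\alpha}\norm{u}_{t\ge\tau}+[g_x]_{\alpha,t\ge\tau}+\tau^{-\alpha}\norm{g_x}_{t\ge\tau}$. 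With this device your separate treatment of $u_1$ via Lemma~\ref{le:estv} becomes unnecessary.

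Your item (b) also underestimates the work. The Schauder output above leaves $\tau^{-1}[u]_{\alpha,t\ge\tau}$ and $\norm{g_x}$-terms on the right, so one must interpolate $[u]_\alpha$, $\norm{u_{xxxx},u_{xx},u_t}$, and $\norm{g_x}$, $[g_x]_\alpha$ between the extremes appearing in \eqref{eq:newtoshowu} and then absorb. This is the paper's Step~1, carried out via the anisotropic interpolation inequalities of Lemma~\ref{l:interp}; your sketch of ``vanishing at $t/2$'' would give $|(u_2)_{xxxx}(t)|\lesssim [g_x]_\alpha\,(t/2)^\alpha$, which has the right scaling only after one has already established $[g_x]_{\alpha,t\ge t/2}\lesssim t^{-1-\alpha}$---itself an interpolation statement of the same type.
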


\begin{corollary}\label{co:estu}
	For $u$ and $g$ as in Lemma \ref{le:estu}, we have that
	\begin{align}
		\begin{split}\label{eq:estu}
			\MoveEqLeft
			\sup_{\T>0}\left(\T^{2}\norm{u_{8x},u_{6x},u_{txxxx},u_{xxxx},u_{txx},u_{tt}}_{t\geq \tau}+\T\norm{u_{xxxx},u_{xx},u_{t}}_{t\geq \tau}+\norm{u}_{t\geq \tau}\right)\\
			&\lesssim \sup_{\T>0}\left(\T^{\frac{1}{2}}\wedge\T^{\frac{3}{4}}\right)\left(\T^{2}\norm{g_{8x},g_{6x},g_{txxxx},g_{xxxx},g_{txx},g_{tt}}_{t\geq \tau}+\T\norm{g_{xxxx},g_{xx},g_{t}}_{t\geq \tau}+\norm{g}_{t\geq \tau}\right).
		\end{split}
	\end{align}
\end{corollary}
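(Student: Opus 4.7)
The bounds $\tau\norm{u_{xxxx},u_{xx},u_t}_{t\geq\tau}$ and $\norm{u}_{t\geq\tau}$ appearing on the LHS of \eqref{eq:estu} are already delivered by Lemma~\ref{le:estu}. The task is to upgrade this to the $\tau^{2}$-weighted higher-order derivatives $u_{8x}, u_{6x}, u_{txxxx}, u_{xxxx}, u_{txx}, u_{tt}$. The guiding idea is that differentiating the equation $Lu = g_{x}$ (where $L := \partial_{t} - \partial_{x}^{2} + \partial_{x}^{4}$) in time yields the same PDE with $g$ replaced by $g_{t}$, and applying Lemma~\ref{le:estu} at one higher order then yields the $\tau^{2}$-weighted estimates — provided one can manage the (generally nonzero) initial data of the time-differentiated quantity.

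The plan is a \emph{time-shift and decompose} argument. Fix $\tau > 0$ and set $\bar{u}(t, x) := u(t + \tau/2, x)$ and $\bar{g}(t, x) := g(t + \tau/2, x)$, so $\bar{u}$ solves $L\bar{u} = \bar{g}_{x}$ with the same boundary conditions at $x = 0$ but with initial data $\bar{u}(0, \cdot) = u(\tau/2, \cdot)$. Decompose $\bar{u} = A + B$, where $A$ solves the inhomogeneous PDE with zero initial data (handled by Lemma~\ref{le:estu}) and $B$ solves the homogeneous PDE with initial data $u(\tau/2, \cdot)$ (handled by Lemma~\ref{le:estv}). The key point is that Lemma~\ref{le:estv} delivers the entire set of smoothing estimates \eqref{eq:estiamteforv2} — including the $\tau^{2}$-weighted norms of $B_{xxxx}, B_{txx}, B_{tt}, B_{txxxx}, B_{6x}, B_{8x}$ — in terms of $\norm{u(\tau/2, \cdot)}$, which is itself controlled by Lemma~\ref{le:estu} applied to $u$. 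For the $\tau^{2}$-weighted estimates on $A$ (for which Lemma~\ref{le:estu} alone only provides $\tau$-weighted bounds), I iterate the shift-and-decompose procedure, now applied to $A_{t}$ (whose initial datum $\bar{g}_{x}(0, \cdot)$ is nonzero but harmless after a further shift). Only finitely many iterations are needed.

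For the pure spatial derivatives, the PDE is used directly. From $u_{xxxx} = u_{xx} - u_{t} + g_{x}$, differentiation yields
\begin{align*}
u_{6x} &= u_{xxxx} - u_{txx} + g_{xxx},\\
u_{8x} &= u_{6x} - u_{txxxx} + g_{5x}.
\end{align*}
The $u$-terms on the right sides have been estimated at the $\tau^{2}$ scale by the preceding steps. The odd-order derivatives of $g$ do not appear in the RHS of \eqref{eq:estu}, but are recovered via the Landau--Kolmogorov interpolation $\norm{g^{(2k+1)}} \lesssim \norm{g^{(2k)}}^{1/2}\norm{g^{(2k+2)}}^{1/2}$; a short calculation, using that both $\norm{g^{(2k)}}_{t\geq\tau}$ and $\norm{g^{(2k+2)}}_{t\geq\tau}$ are controlled by the RHS with the appropriate $\tau$-power, shows that $\tau^{2}\norm{g^{(2k+1)}}_{t\geq\tau}$ is bounded by the RHS (in the regime of large $\tau$, where the weight $\tau^{1/2}\wedge\tau^{3/4} = \tau^{1/2}$).

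The main obstacle is the bookkeeping: at each iteration the time-differentiated equation carries non-vanishing initial data, and one must verify that the scaling factor $\tau^{1/2}\wedge\tau^{3/4}$ on the RHS of the corollary correctly absorbs the loss incurred by shifting time. Fortunately, Lemma~\ref{le:estv} provides \emph{unconditional} smoothing of the homogeneous piece in terms of an $L^{\infty}$ bound on its initial data — which is exactly what the previous iteration step supplies — so the argument closes after finitely many steps.
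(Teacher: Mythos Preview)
Your shift--decompose--iterate strategy is a legitimate alternative, but it is not the route taken in the paper. The paper observes that the function $tu_{t}-u$ again satisfies an equation of the form \eqref{eq:equ}, namely
\[
(tu_{t}-u)_{t}-(tu_{t}-u)_{xx}+(tu_{t}-u)_{xxxx}=(tg_{t}+u_{x}-u_{xxx})_{x},
\]
with vanishing initial and boundary data (since $u(0,\cdot)=0$ and $u=u_{xx}=0$ at $x=0$). One application of Lemma~\ref{le:estu} to $tu_{t}-u$, combined with the identity $\sup_{\tau}\tau\norm{t\,\cdot}_{t\geq\tau}=\sup_{\tau}\tau^{2}\norm{\cdot}_{t\geq\tau}$, immediately produces the $\tau^{2}$--weighted bounds on $u_{tt},u_{txx},u_{txxxx}$. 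The odd--order $u$--derivatives that arise in the new right-hand side $tg_{t}+u_{x}-u_{xxx}$ are absorbed by interpolation and a small--parameter argument. This one-shot device replaces your entire iteration and avoids all bookkeeping of shifted initial data.

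There is also a genuine gap in your treatment of the purely spatial derivatives. Writing $u_{6x}=u_{xxxx}-u_{txx}+g_{xxx}$ and claiming that ``the $u$--terms on the right sides have been estimated at the $\tau^{2}$ scale by the preceding steps'' is circular: your iteration yields $\tau^{2}\norm{u_{tt},u_{txx},u_{txxxx}}$ but \emph{not} $\tau^{2}\norm{u_{xxxx}}$, which is precisely one of the quantities you still need to bound (and which then reappears on the right when you pass to $u_{8x}$). The fix --- which the paper carries out explicitly --- is to read the differentiated equation as an elliptic problem for the operator $1-\partial_{x}^{2}$: from $(\partial_{x}^{2}-1)u_{xxxx}=-u_{txx}+g_{xxx}$ one gets $\norm{u_{xxxx},u_{6x}}\lesssim\norm{u_{txx}}+\norm{g_{xxx}}$, and similarly $\norm{u_{6x},u_{8x}}\lesssim\norm{u_{txxxx}}+\norm{g_{5x}}$. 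With this elliptic step in place, either approach closes; without it, yours does not. (A minor additional point: your interpolation for the odd $g$--derivatives is stated only for $\tau\geq 1$; for $\tau\leq 1$ one needs the fourth-order interpolation $\norm{g_{xxx}}\lesssim\norm{g_{6x}}^{1/4}\norm{g_{xx}}^{3/4}$ to match the weight $\tau^{3/4}$.)
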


We prove Lemmas \ref{le:estv}--\ref{le:estu} and Corollary \ref{co:estu} below after first showing how Proposition \ref{le:regularity} may be deduced from these results.

\begin{proof}[Proof of Proposition \ref{le:regularity}]
	Transforming in time and space via $\hat\zeta(t,x)=\zeta(T-t,\gamma(t)-x)$, $\hat\psi(x)=\psi(\gamma(t)-x)$ (and dropping the hats) leads in   place of \eqref{eq:zetagamma} to the transformed problem
	\begin{align}
		\begin{split}\label{eq:new_eq_zeta}
		\left\{
		\begin{aligned}
			\zeta_{t}-\zeta_{xx}+\zeta_{xxxx}&=g_{x}		&&\text{on }	&&t\in (0,\infty),	&&x\in (0,\infty),\\
			\zeta=\zeta_{xx}&=0								&&\text{for }	&&t\in (0,\infty),	&& x=0,\\
			\zeta&=\g										&&\text{for }	&&t=0,				&& x\in [0,\infty),
		\end{aligned}
		\right.
		\end{split}
	\end{align}
	with
	\begin{align}\label{eqg}
		g:=\dot\gamma\zeta\quad\text{and}\quad
		\gamma(t):=c(T)-C_1 (t+\Lambda)^{\frac{1}{4}}.
	\end{align}
	Then for \eqref{zetag}--\eqref{zetagxxxx}, it suffices to show
	\begin{align}
		\begin{split}\label{eq:schauder}
		\MoveEqLeft
		\sup_{\T>0}\left(\Bigl(\T^{2}\vee\T\Bigr)\norm*{\zeta_{xxxx}}_{t\geq \T}
		+\Bigl(\T^{\frac{3}{2}}\vee\T^{\frac{3}{4}}\Bigr)\norm*{\zeta_{xxx}}_{t\geq \T}
		+\Bigl(\T\vee\T^{\frac{1}{2}}\Bigr)\norm*{\zeta_{xx}}_{t\geq \T}\right.\\
		 &\qquad\qquad\left.+\Bigl(\T^{\frac{1}{2}}\vee\T^{\frac{1}{4}}\Bigr)\norm*{\zeta_{x}}_{t\geq \T}
		+\norm*{\zeta}_{t\geq \T}\right)
		\lesssim 1.
		\end{split}
	\end{align}
	We note that using elementary interpolation (as in the proof of \eqref{eq:newinterpolate2}), it is enough to prove
	\begin{align}\label{eq:intermediate}
		 \sup_{\T>0}\left(\T^{2}\norm{\zeta_{8x},\zeta_{6x},\zeta_{txxxx},\zeta_{xxxx},\zeta_{txx},\zeta_{tt}}_{t\geq \tau}+\T\norm{\zeta_{xxxx},\zeta_{xx},\zeta_{t}}_{t\geq \tau}+\norm{\zeta}_{t\geq \tau}\right)\lesssim 1.
	\end{align}

To this end we write $\zeta=u+v$ with $u$ satisfying \eqref{eq:equ} for $g$ given by \eqref{eqg} and $v$ satisfying \eqref{eq:eqv}.
	According to the triangle inequality, \eqref{eq:estu}, and \eqref{eq:estiamteforv2}, it suffices to establish
	\begin{align*}
		\MoveEqLeft
		 \sup_{\T>0}\left(\T^{\frac{1}{2}}\wedge\T^{\frac{3}{4}}\right)\left(\T^{2}\norm{g_{8x},g_{6x},g_{txxxx},g_{xxxx},g_{txx},g_{tt}}_{t\geq \tau}+\T\norm{g_{xxxx},g_{xx},g_{t}}_{t\geq \tau}+\norm{g}_{t\geq \tau}\right)\\
		&\ll \sup_{\T>0}\left(\T^{2}\norm{\zeta_{8x},\zeta_{6x},\zeta_{txxxx},\zeta_{xxxx},\zeta_{txx},\zeta_{tt}}_{t\geq \tau}+\T\norm{\zeta_{xxxx},\zeta_{xx},\zeta_{t}}_{t\geq \tau}+\norm{\zeta}_{t\geq \tau}\right),
	\end{align*}
where $\ll$ means that the left-hand side can be made smaller than any multiple of the right-hand side by choosing $\Lambda$ sufficiently large.
Using definition~\eqref{eqg}, we observe that it suffices to show
	\begin{align*}
		 \sup_{\T>0}\left(\T^{\frac{1}{2}}\wedge\T^{\frac{3}{4}}\right)\norm*{\T^{2}\dddot\gamma,\T\ddot\gamma,
\dot \gamma}_{t\geq \T}\ll 1,
	\end{align*}
	which  follows for $\Lambda$ sufficiently large with respect to $C_1^4$ from
	\begin{align*}
		\MoveEqLeft
		 \sup_{\T>0}\left(\T^{\frac{1}{2}}\wedge\T^{\frac{3}{4}}\right)\norm*{\T^{2}\dddot\gamma,\T\ddot\gamma,
\dot \gamma}_{t\geq \T}\\
		&\lesssim C_{1}\sup_{\T>0}\T^{\frac{1}{2}}\left(\frac{1}{(\T+\Lambda)^{\frac{3}{4}}}+\frac{\tau}{(\T+\Lambda)^{\frac{7}{4}}}+\frac{
\tau^2}{(\T+\Lambda)^{\frac{11}{4}}}\right)\\
		&\lesssim C_{1}\sup_{\T>0}\frac{1}{(\T+\Lambda)^{\frac{1}{4}}}\leq \frac{C_{1}}{\Lambda^{\frac{1}{4}}}.
	\end{align*}
\end{proof}

%Lemma \ref{le:estv} can be proved similarly to Proposition \ref{paraboliczeta} using reflection.
It remains to establish the auxiliary results. The proof of Lemma \ref{le:estv} is straightforward.

\begin{proof}[Proof of Lemma \ref{le:estv}]
We deduce \eqref{eq:estiamteforv2} using
	\begin{align}\label{eq:estiamteforv}
		\sup_{\T>0}\left(
		\sum_{k=1}^{8}\Bigl(\T^{\frac{k}{2}}\vee \T^{\frac{k}{4}}\Bigr)\norm*{\partial_x^k v}_{t\geq \T}
		+\T\norm{v_t}_{t\geq\T}+\norm{v}_{t\geq\T}\right)
		\lesssim \norm{\g},
	\end{align}
together with \eqref{eq:eqv} and the triangle inequality.

	To establish \eqref{eq:estiamteforv}, we use odd reflection and proceed as in the proof of Proposition \ref{paraboliczeta}.
Note that we can  bound higher order derivatives by estimating just as we did for the lower order derivatives.
\end{proof}

For the proof of Lemma \ref{le:estu}, we cannot use reflection since $g_{x}$ does not vanish at the origin. Instead, we will use the following half-line Schauder estimates.

\begin{proposition}[Schauder estimates]\label{prop:Schauder}
	Fix $\alpha\in (0,\frac{1}{4})$. Let $u$ satisfy
	\begin{align}\label{eq:t-xx+xxxx}
		\left\{
		\begin{aligned}
			u_{t}-u_{xx}+u_{xxxx}&=f		&&\text{on }	&&t\in \R,	&&x\in (0,\infty),\\
			u=u_{xx}&=0						&&\text{for }	&&t\in \R,	&& x=0.
		\end{aligned}
		\right.
	\end{align}
	Then $u$ satisfies
	\begin{align}\label{eq:Schauder_est}
		[u_{xxxx},u_{xx},u_{t}]_{\alpha}\lesssim [f]_{\alpha},
	\end{align}
	where the constant in \eqref{eq:Schauder_est} depends on $\alpha$ and $[\cdot]_\alpha$ is defined
in~\eqref{eq:hoelder_semi_norm}.
\end{proposition}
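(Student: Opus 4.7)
The plan is to follow the classical Schauder strategy---freezing of the right-hand side, subtraction, and dyadic decomposition of Duhamel's formula---adapted to the half-line and to the two scalings carried by $L:=\partial_t-\partial_{xx}+\partial_{xxxx}$, which are precisely those encoded in the Carnot--Carathéodory seminorm \eqref{eq:hoelder_semi_norm}.

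\textbf{Step 1 (Green's function on the half-line).} The boundary conditions $u=u_{xx}=0$ at $x=0$ are compatible with odd reflection, since the full-line fundamental solution $K(t,x)$ of $L$ is even in $x$ (its Fourier symbol is $e^{-(k^2+k^4)t}$). Consequently
\begin{align*}
G(t,x,y):=K(t,x-y)-K(t,x+y),\qquad x,y>0,
\end{align*}
is the half-line Green's function, and $u(t,x)=\int_{-\infty}^t\!\!\int_0^\infty G(t-s,x,y)\,f(s,y)\,dy\,ds$. Writing $K=H^{(2)}(t,\cdot)\ast H^{(4)}(t,\cdot)$ as in Subsection~\ref{subs:parabolicM}, the correct length scale at time $t$ is $\rho(t):=t^{1/4}\wedge t^{1/2}$, and one has pointwise bounds $|\partial_x^k K(t,x)|\lesssim\rho(t)^{-(k+1)}$ with Schwartz-type decay in $x/\rho(t)$. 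This matches the CC parabolic distance underlying \eqref{eq:hoelder_semi_norm}.

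\textbf{Step 2 (freezing and reduction).} Fix two points $P_1,P_2$ in $\mathbb{R}\times(0,\infty)$, let $\rho:=d(P_1,P_2)$ be their CC-distance, and let $P_0$ be a reference point nearby. Decompose $f=f(P_0)+\tilde f$: the constant part contributes a solution whose spatial and temporal derivatives cancel identically in the difference $Q(P_2)-Q(P_1)$ for $Q\in\{u_t,u_{xx},u_{xxxx}\}$, so one is left to bound $Q(P_2)-Q(P_1)$ in the case $|\tilde f(P)|\leq[f]_\alpha\, d(P,P_0)^\alpha$.

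\textbf{Step 3 (dyadic decomposition in CC-annuli).} Split the Duhamel integral into CC-annuli $A_j$ around $P_0$ of radius $r\sim 2^{-j}\rho$ or $r\sim 2^j\rho$. For the \emph{near shells} $r\lesssim\rho$, estimate $\partial_x^4 G$ at $P_1$ and $P_2$ separately by $\rho(r)^{-5}$; the annulus has CC-volume of the correct power so that, together with $|\tilde f|\lesssim[f]_\alpha r^\alpha$, the geometric series sums to $\lesssim[f]_\alpha\rho^\alpha$ provided $\alpha<1/4$. For the \emph{far shells} $r\gtrsim\rho$, use the mean-value theorem to gain a factor $\rho/r$ (with $\rho,r$ interpreted in the CC-metric, so that the gain is $\rho/r$ in the biharmonic regime and also in the heat regime), combined with the kernel bounds on $\partial_x^5 K$, $\partial_t K$ and $\partial_x K_t$. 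The same program is run for $u_{xx}$ (using $\partial_x^3 K$ for the differencing step) and for $u_t$ (using the equation to trade $u_t$ against $u_{xxxx}-u_{xx}$ plus $f$ itself, or directly using $\partial_t K$ estimates).

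\textbf{Main obstacle.} The principal technical issue is that the symbol of $L$ carries two scalings simultaneously---the biharmonic one dominates for $\rho\leq 1$ and the heat one for $\rho\geq 1$---so the Schauder constant must remain uniform across the transition $\rho\sim 1$. This is what forces the dyadic decomposition to be performed in CC-annuli and all kernel bounds to be written in the unified form $\rho(t)^{-(k+1)}$ valid in both regimes. A secondary, but delicate, difficulty occurs near the boundary: when $x_0\lesssim\rho$ the two pieces $K(t,x-y)$ and $K(t,x+y)$ in $G$ have comparable size, and one must exploit the cancellation $G\sim 2x\,\partial_x K(t,y)$ to recover the extra factor of $x/\rho$ needed to match the CC scaling all the way down to $x=0$; without this, the reflected term would spoil the boundary estimate at precisely the scales one needs.
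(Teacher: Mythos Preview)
Your Step~2 contains the main gap. On the whole line, freezing $f$ to the constant $f(P_0)$ is harmless because the constant-RHS solution is $u=f(P_0)\,t$, whose derivatives $u_t,u_{xx},u_{xxxx}$ are constant and drop out of any difference. On the half-line with $u=u_{xx}=0$ at $x=0$, this fails: for $f\equiv c$ the bounded time-independent solution has $u_{xx}=c(1-e^{-x})$ and $u_{xxxx}=ce^{-x}$, whose H\"older seminorms are of order $|c|=|f(P_0)|$, not $[f]_\alpha$; and time-dependent solutions fail to decay backward in time, so they lie outside the uniqueness class implicit in the proposition. Hence the constant part does \emph{not} ``cancel identically,'' and your dyadic argument for $\tilde f$ is not enough to close the estimate near the boundary. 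What you flag as the ``secondary, but delicate'' difficulty is in fact the primary one.

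The paper's proof resolves this by a different decomposition made at the outset: split $f=(f-f(\cdot,0))+f(\cdot,0)$. For the first piece $f(t,0)=0$, odd reflection produces a globally H\"older right-hand side on the line, and the whole-line estimate (where one uses the zero-mean of $H_t$ to subtract $f(s,x)$ rather than a frozen constant) gives $[u_t-f]_\alpha\lesssim[f]_\alpha$; the spatial derivatives $u_{xx},u_{xxxx}$ are then recovered by one-dimensional elliptic theory for $-v+v_{xx}=g$. For the second piece $f=f(t)$, take a temporal primitive $F$ and set $w=F-u$; then $w$ solves the homogeneous equation with inhomogeneous Dirichlet data $w|_{x=0}=F$, and the estimate $[u_t]_\alpha\lesssim[f]_\alpha$ becomes a Poisson-kernel bound $\int(|P|+x|P_x|)t^\alpha\,dt\lesssim(x^4\wedge x^2)^\alpha$, proved by comparing $H$ to $H^{(4)}$ for $t\le1$ and to $H^{(2)}$ for $t\ge1$. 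Your cancellation heuristic $G\sim 2x\,\partial_xK$ is morally the same phenomenon, but the paper isolates it cleanly in the Poisson-kernel estimate rather than inside a dyadic sum.
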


\begin{remark}
	In Proposition \ref{prop:Schauder} we allow a dependence on $\alpha\in (0,\frac{1}{4})$ for the constant in \eqref{eq:Schauder_est}.
	For our purposes the dependence on $\alpha$ is not an issue since here
	and in the two appendices
	an application of the estimates for a fixed value of $\alpha\in (0,\frac{1}{4})$ suffices.
\end{remark}

Although Proposition \ref{prop:Schauder} may be well-known, we are not aware of a proof in the literature for the seminorm defined by \eqref{eq:hoelder_semi_norm}; hence we include a proof in Appendix \ref{section:proof_schauder}. Estimates on the half-space (which are related to those above but not sufficient for our application) are found already in the work of Solonnikov; cf.\ \cite[(3.1)-(3.6)]{S}.

We now turn to the application.

\begin{proof}[Proof of Lemma \ref{le:estu}]
	Fix $\alpha\in (0,\frac{1}{4})$.
	We will show the lower order estimate
	\begin{align}\label{eq:lower_order_terms}
		\norm{u}\lesssim \sup_{\T>0}\left(\left(\T^{\frac{1}{2}}\wedge\T^{\frac{3}{4}}\right)\norm{g}_{t\geq \T}\right)
	\end{align}
	and the higher order estimate
\begin{align}\label{eq:uxx_hoelder_est}
			\sup_{\tau>0}\T^{1+\alpha}[u_{xxxx},u_{xx},u_{t}]_{\alpha,t\geq 2\tau}
			\lesssim \sup_{\T>0}\left(\T^{1+\alpha}[g_{x}]_{\alpha,t\geq\T}+\T\norm{g_{x}}_{t\geq \T}+\T^{\alpha}[u]_{\alpha,t\geq\T}+\norm{u}_{t\geq\tau}\right).
		\end{align}	
Here and below, we use the notation
\begin{align}
  [f]_{\alpha,t\geq\tau}:=\sup_{t\geq \tau,x\geq 0}\sup_{s\geq 0,z\geq 0}
  \frac{\abs{f(t+s,x+z)-f(t,z)}}{(s+( z^2 \wedge z^4 ))^{\alpha}}.\label{taudef}
\end{align}
We begin by showing how \eqref{eq:lower_order_terms} and \eqref{eq:uxx_hoelder_est} lead to the result and then proceed to prove the estimates.

%%%%%%%%%%%%%%%%%%%
\step{Step 1:}
Using \eqref{eq:lower_order_terms}--\eqref{eq:uxx_hoelder_est} and interpolation (cf.\ Lemma~\ref{l:interp}), we deduce
\begin{align}
		\begin{split}\label{eq:intermediate-messy}
		\MoveEqLeft
		\sup_{\tau>0}\bigg(\T^{1+\alpha}[u_{xxxx},u_{xx},u_{t}]_{\alpha,t\geq 2\tau}+\T\norm{u_{xxxx},u_{xx},u_{t}}_{t\geq 2\tau}\\
		&\qquad \left.+\left(\T^{\frac{1}{2}}\vee\T^{\frac{1}{4}}\right)\norm{u_{x}}_{t\geq 2\tau}+\T^{\alpha}[u]_{\alpha,t\geq 2\tau}\right)\\
		&\lesssim \sup_{\T>0}\left(\T^{1+\alpha}[g_{x}]_{\alpha,t\geq\T}+\T\norm{g_{x}}_{t\geq \T}+\left(\T^{\frac{1}{2}}\wedge\T^{\frac{3}{4}}\right)\norm{g}_{t\geq\tau}\right).
		\end{split}
	\end{align}
%%%%%%%%%%%%		
Indeed, adding the ``missing'' left-hand side terms of \eqref{eq:intermediate-messy} to both sides of \eqref{eq:uxx_hoelder_est}, we use \eqref{eq:newinterpolate3}, \eqref{eq:newinterpolate2}, and \eqref{eq:newinterpolate1} for $\delta$ sufficiently small to the $u$-dependent right-hand side terms (other than $\norm{u}$). We absorb the resultant $\delta$-dependent terms into the left-hand side and then apply \eqref{eq:lower_order_terms} to control the infinity norm of $u$ in terms of $g$.

%%%%%%%%%%%
To pass from \eqref{eq:lower_order_terms} and \eqref{eq:intermediate-messy} to \eqref{eq:newtoshowu}, we use Lemma \ref{l:interp} together with the identities
	\begin{align}\label{ids} \T\left(\T^{\frac{1}{2}}\vee\T^{\frac{1}{4}}\right)=\left(\T^{\frac{1}{2}}\wedge\T^{\frac{3}{4}}\right)\left(\T\vee\T^{\frac{1}{2}}\right)\quad\text{and}\quad
		 \T=\left(\T^{\frac{1}{2}}\wedge\T^{\frac{3}{4}}\right)\left(\T^{\frac{1}{2}}\vee\T^{\frac{1}{4}}\right).
	\end{align}
Indeed, we observe on the one hand
\begin{align*}
  \T \norm{g_x}\overset{\eqref{ids}}=\left(\T^{\frac{1}{2}}\wedge\T^{\frac{3}{4}}\right)\left(\T^{\frac{1}{2}}\vee\T^{\frac{1}{4}}\right)\norm{g_x}
  \overset{\eqref{eq:newinterpolate2}}\lesssim\left(\T^{\frac{1}{2}}\wedge\T^{\frac{3}{4}}\right)\left(\T\norm{g_{xxxx},g_{xx}}+\norm{g_x}\right),
\end{align*}
and on the other hand
\begin{eqnarray*}
\T^{1+\alpha}[g_x]_{\alpha,t\geq\tau}&\overset{\eqref{eq:newinterpolate4}}\lesssim&\T
\left(\left(\T^\frac{1}{2}\wedge\T^\frac{3}{4}\right)\norm{g_t}+\left(\T^\frac{1}{2}\vee\T^\frac{1}{4}\right)
\norm{g_{xx}}+\norm{g_x}\right).
\end{eqnarray*}
For the second order term we estimate
\begin{align*}
\T\left(\T^\frac{1}{2}\vee\T^\frac{1}{4}\right)\norm{g_{xx}}
\overset{\eqref{ids}}=\left(\T^{\frac{1}{2}}\wedge\T^{\frac{3}{4}}\right)
\left(\T\vee\T^{\frac{1}{2}}\right)\norm{g_{xx}}
\overset{\eqref{eq:lasthingtoshow}}\lesssim \left(\T^{\frac{1}{2}}\wedge\T^{\frac{3}{4}}\right)\left(\T\norm{g_{xxxx},g_{xx}}+\norm{g}\right).
\end{align*}
%Substituting \eqref{g3} into \eqref{g2} and using this and \eqref{g1}
%use \eqref{eq:newinterpolate2}, \eqref{eq:lasthingtoshow}, and \eqref{eq:newinterpolate4}  for the function $g$
%	\begin{align}\label{eq:lasthingtoshow}
%		\left(\T\vee\T^{\frac{1}{2}}\right)\norm{g_{xx}}_{t\geq \tau}+\left(\T^{\frac{1}{2}}\vee\T^{\frac{1}{4}}\right)\norm{g_{x}}_{t\geq \tau}
%		\lesssim \T\norm{g_{xxxx},g_{xx}}_{t\geq \tau}+\norm{g}_{t\geq \tau}
%	\end{align}
%	together with the identities

%	For the first summand in \eqref{eq:lasthingtoshow}, we only need to consider $\T\leq 1$. In this case, we estimate
%	\begin{align*}
%		\T^{\frac{1}{2}}\norm{g_{xx}}_{t\geq \tau}\lesssim \T^{\frac{1}{2}}\norm{g_{xxxx}}_{t\geq \tau}^{\frac{1}{2}}\norm{g}_{t\geq \tau}^{\frac{1}{2}}
%		\lesssim \T\norm{g_{xxxx}}_{t\geq \tau}+\norm{g}_{t\geq \tau}.
%	\end{align*}
%	The second summand is treated in \eqref{eq:newinterpolate2}.

	\step{Step 2:}
		To obtain the lower order estimate \eqref{eq:lower_order_terms}, let $g$ be extended by even reflection to the line. It is enough to derive an estimate on the even function $w$ satisfying
		\begin{align*}
			\left\{
			\begin{aligned}
				\U_{t}-\U_{xx}+\U_{xxxx}&=g		&&\text{on }	&&t\in (0,\infty),	&&x\in \R,\\
				\U&=0							&&\text{for }	&&t=0,				&& x\in \R,
			\end{aligned}
			\right.
		\end{align*}
since $w_x$ satisfies \eqref{eq:equ}.
		Using Duhamel's principle, we rewrite $\U$ as
		\begin{align*}
			\U=\int_{0}^{t}\U^{\T}\,\dd \T
			\qquad\text{for}\qquad
			\left\{
			\begin{aligned}
				\U^{\T}_{t}-\U_{xx}^{\T}+\U_{xxxx}^{\T}&=0	&&\text{on  }	&& t\in (\T,\infty), && x\in\R,\\
				\U^{\T}&=g									&&\text{for } && t=\T, && x\in \R.
			\end{aligned}
			\right.
		\end{align*}
		As in the proof of Lemma \ref{le:estv} (noting that now $\norm{g}$ instead of $\norm{g_{x}}$ is bounded), we observe that
		\begin{align}\notag
			\norm*{\U_{x}^{\T}(t,\cdot)}\lesssim \left(\frac{1}{(t-\T)^{\frac{1}{2}}}\wedge\frac{1}{(t-\T)^{\frac{1}{4}}}\right) \norm{g(\T,\cdot)},
		\end{align}
		which implies
		\begin{eqnarray*}
			\norm{\U_{x}(t,\cdot)}&\lesssim& \int_{0}^{t}\norm*{\U_{x}^{\T}(t,\cdot)}\,\dd \T
			\lesssim\int_0^t\left(\frac{1}{(t-\T)^{\frac{1}{2}}}
\wedge\frac{1}{(t-\T)^{\frac{1}{4}}}\right) \norm{g(\T,\cdot)}\dd \T\\
&=&\int_0^t\left(\frac{1}{(t-\T)^{\frac{1}{2}}}
\wedge\frac{1}{(t-\T)^{\frac{1}{4}}}\right)\left(\frac{1}{\T^\frac{1}{2}}\vee\frac{1}{\T^\frac{3}{4}}\right)
\left(\T^\frac{1}{2}\wedge\T^\frac{3}{4}\right)\norm{g(\T,\cdot)}\,\dd\T\\
			&\lesssim& \sup_{\T>0}\left(\T^{\frac{1}{2}}\wedge\T^{\frac{3}{4}}\right)\norm{g}_{t\geq\T},
		\end{eqnarray*}
		and hence \eqref{eq:lower_order_terms}.

	\step{Step 3:}
		To establish \eqref{eq:uxx_hoelder_est}, let $\hat{\eta}\colon\R\to[0,1]$ be a smooth cutoff function with
$\hat{\eta}(s)= 0$ for all $s\leq 1$ and $\hat\eta(s)=1$ for all $s\geq 2$ and define
$\eta\colon\R\to [0,1]$ via $\eta(t):=\hat\eta\left(\frac{t}{\tau}\right)$.
Then $\tilde u:=\eta u$  satisfies
		\begin{align*}
			\left\{
			\begin{aligned}
				\tilde u_{t}-\tilde u_{xx}+\tilde u_{xxxx}&=\dot\eta u+\eta g_{x}		&&\text{on }&&t\in \R,	&&x\in (0,\infty),\\
				\tilde u=\tilde{u}_{xx} &=0															 &&\text{for }
				&&t\in \R,	&& x=0,
			\end{aligned}
			\right.
		\end{align*}
		and the right-hand side of the differential equation is compactly supported backward in time.		
		According to Proposition \ref{prop:Schauder}, we have the estimate
		\begin{align*}
		[u_{xxxx},u_{xx},u_{t}]_{\alpha,t\geq 2\tau}
			&\leq	[\tilde u_{xxxx},\tilde u_{xx},\tilde u_{t}]_{\alpha}\lesssim 	[\dot\eta u+\eta g_{x}]_{\alpha}\\
			&\lesssim \norm{\dot\eta}[u]_{\alpha,t\geq \T}+[\dot\eta]_{\alpha}\norm{u}_{t\geq \T}+\norm{\eta}[g_{x}]_{\alpha,t\geq \T}+[\eta]_{\alpha}\norm{g_{x}}_{t\geq \T}\\
			&\lesssim \frac{1}{\T}[u]_{\alpha,t\geq \T}+\frac{1}{\T^{1+\alpha}}\norm{u}_{t\geq \T}+[g_{x}]_{\alpha,t\geq \T}+\frac{1}{\T^{\alpha}}\norm{g_{x}}_{t\geq \T},
		\end{align*}
		which implies \eqref{eq:uxx_hoelder_est}.
\end{proof}

\begin{proof}[Proof of Corollary \ref{co:estu}]
	We note that $tu_{t}-u$  satisfies
	\begin{align*}
		 \left(tu_{t}-u\right)_{t}-\left(tu_{t}-u\right)_{xx}+\left(tu_{t}-u\right)_{xxxx}=\left(tg_{t}+u_{x}-u_{xxx}\right)_{x}
	\end{align*}
	and
	\begin{align*}
		\left(tu_{t}-u\right)_{xx}=tu_{t}-u=0\quad\text{at }x=0.
	\end{align*}
	Combining \eqref{eq:newtoshowu} for $u$ and for $tu_{t}-u$ with the triangle inequality yields
	\begin{align*}
		\MoveEqLeft
		\sup_{\T>0}\left(\T\norm{t (u_{txxxx},u_{txx},u_{tt})}_{t\geq \T}+\T\norm{u_{xxxx},u_{xx},u_{t}}_{t\geq \T}+\norm{tu_{t},u}_{t\geq \T}\right)\\
		&\lesssim \sup_{\T>0}\left(\T^{\frac{1}{2}}\wedge\T^{\frac{3}{4}}\right)\left(\T\norm{t (g_{txxxx},g_{txx},g_{tt})}_{t\geq \T}
		+\norm{tg_{t}}_{t\geq \T}
		+\T\norm{g_{xxxx},g_{xx},g_{t}}_{t\geq \T}+\norm{g}_{t\geq \T}
		\right.\\
		&\qquad\qquad \left.+\T\norm{u_{7x},u_{5x},u_{txxx},u_{xxx},u_{tx}}_{t\geq \T}+\norm{u_{xxx},u_{x}}_{t\geq \T}\right).
	\end{align*}
	Using
$
		\sup_{\T>0}\T\norm{t\cdot}_{t\geq \T}=\sup_{\T>0}\T^{2}\norm{\cdot}_{t\geq \T}$
and
$\sup_{\T>0}\norm{t\cdot}_{t\geq \T}=\sup_{\T>0}\T\norm{\cdot}_{t\geq \T}$,
 this turns into
	\begin{align}
		\MoveEqLeft
		\sup_{\T>0}\left(\T^{2}\norm{u_{txxxx},u_{txx},u_{tt}}_{t\geq \T}+\T\norm{u_{xxxx},u_{xx},u_{t}}_{t\geq \T}+\norm{u}_{t\geq \T}\right)\notag\\
		&\lesssim \sup_{\T>0}\left(\T^{\frac{1}{2}}\wedge\T^{\frac{3}{4}}\right)\left(\T^{2}\norm{g_{txxxx},g_{txx},g_{tt}}_{t\geq \T}
		+\T\norm{g_{xxxx},g_{xx},g_{t}}_{t\geq \T}+\norm{g}_{t\geq \T}
		\right.\notag\\
		&\qquad\qquad \left.+\T\norm{u_{7x},u_{5x},u_{txxx},u_{xxx},u_{tx}}_{t\geq \T}+\norm{u_{xxx},u_{x}}_{t\geq \T}\right).\label{fstar}
	\end{align}
	On the one hand, applying $\partial_{x}^{4}$ to $-u_{xx}+u_{xxxx}=-u_t+g_{x}$ we get
	\begin{align*}
		\norm{u_{8x},u_{6x}}_{t\geq \tau}\lesssim \norm*{\left(\partial_{x}^{2}-1\right)u_{6x}}_{t\geq \tau}\lesssim \norm{u_{txxxx}}_{t\geq \tau}+\norm{g_{5x}}_{t\geq \tau}
	\end{align*}
	by elementary elliptic regularity for $(-\partial_x^2+1)$. (See for instance the details below \eqref{eq:1_b_alpha-beta} in the appendix.) On the other hand applying $\partial_{x}^{2}$ to $-u_{xx}+u_{xxxx}=-u_t+g_{x}$ yields
	\begin{align*}
		\norm{u_{6x},u_{xxxx}}_{t\geq \tau}\lesssim \norm*{\left(\partial_{x}^{2}-1\right)u_{xxxx}}_{t\geq \tau}\lesssim \norm{u_{txx}}_{t\geq \tau}+\norm{g_{xxx}}_{t\geq \tau}.
	\end{align*}
	Hence
	\begin{align}\label{eq:nb4}
		\norm{u_{8x},u_{6x},u_{xxxx}}_{t\geq \tau}\lesssim \norm{u_{txxxx},u_{txx}}_{t\geq \tau}+\norm{g_{5x},g_{xxx}}_{t\geq \tau}.
	\end{align}
	By elementary spatial interpolation (applying \eqref{eq:newinterpolate2} to the functions $g_{xx}$ and $g_{xxxx}$) we estimate
	\begin{align*}
		\T^{2}\norm{g_{5x},g_{xxx}}_{t\geq \tau}\lesssim \left(\T^{\frac{1}{2}}\wedge\T^{\frac{3}{4}}\right)\left(\T^{2}\norm{g_{8x},g_{6x},g_{xxxx}}_{t\geq \tau}+\T\norm{g_{xxxx},g_{xx}}_{t\geq \tau}
%+\norm{g}_{t\geq \tau}
\right).
	\end{align*}
	Together with \eqref{eq:nb4} this gives
	\begin{align*}
		\lefteqn{\T^{2}\norm{u_{8x},u_{6x},u_{xxxx}}_{t\geq \tau}}\\
&\lesssim \T^{2}\norm{u_{txxxx},u_{txx}}_{t\geq \tau}+\left(\T^{\frac{1}{2}}\wedge\T^{\frac{3}{4}}\right)\left(\T^{2}\norm{g_{8x},g_{6x},g_{xxxx}}_{t\geq \tau}+\T\norm{g_{xxxx},g_{xx}}_{t\geq \tau}
%+\norm{g}_{t\geq \tau}
\right).
	\end{align*}
Combining this with \eqref{fstar} yields
	\begin{align}
		\MoveEqLeft
		\sup_{\T>0}\left(\T^{2}\norm{u_{8x},u_{6x},u_{txxxx},u_{xxxx},u_{txx},u_{tt}}_{t\geq \tau}+\T\norm{u_{xxxx},u_{xx},u_{t}}_{t\geq \T}+\norm{u}_{t\geq \tau}\right)\notag\\
		&\lesssim \sup_{\T>0}\left(\T^{\frac{1}{2}}\wedge\T^{\frac{3}{4}}\right)\left(\T^{2}\norm{g_{8x},g_{6x},g_{txxxx},g_{xxxx},g_{txx},g_{tt}}_{t\geq \tau}
		+\T\norm{g_{xxxx},g_{xx},g_{t}}_{t\geq \tau}+\norm{g}_{t\geq \tau}
		\right.\notag\\
		&\qquad\qquad \left.+\T\norm{u_{7x},u_{5x},u_{txxx},u_{xxx},u_{tx}}_{t\geq \tau}+\norm{u_{xxx},u_{x}}_{t\geq \tau}\right).\label{andthus}
	\end{align}
	By elementary interpolation (similar to the proof of \eqref{eq:newinterpolate2}) and Young's inequality, we have for any $\delta\in(0,1]$ that
	\begin{align}
		\MoveEqLeft \sup_{\T>0}\left(\T^{\frac{1}{2}}\wedge\T^{\frac{3}{4}}\right)\left(\T\norm{u_{7x},u_{5x},u_{txxx},u_{xxx},u_{tx}}_{t\geq \tau}+\norm{u_{xxx},u_{x}}_{t\geq \tau}\right)\notag\\
		&\lesssim \delta \T^{2}\norm{u_{8x},u_{6x},u_{txxxx},u_{xxxx},u_{txx},u_{tt}}_{t\geq \tau} + \frac{1}{\delta^{3}} \left(\T\norm{u_{xxxx},u_{xx},u_{t}}_{t\geq \T}+\norm{u}_{t\geq \tau}\right).\label{stst}
	\end{align}
For example, we employ estimates such as
\begin{align*}
  \left(\T^\half\wedge\T^\thfr\right)\T\norm{u_{tx}}&\lesssim\T^\frac{3}{2}\norm{u_{txx}}^\half\norm{u_t}^\half
  =\left(\T^2\norm{u_{txx}}\right)^\half\left(\T\norm{u_t}\right)^\half,\\
%  \left(\T^\half\wedge\T^\thfr\right)\T\norm{u_{txxx}}&\lesssim\T^\frac{7}{4}\norm{u_{txxxx}}^\thfr\norm{u_t}^\frac{1}{4}
 % =\left(\T^2\norm{u_{txxxx}}\right)^\thfr\left(\T\norm{u_t}\right)^\frac{1}{4}\\
  \left(\T^\fr12\wedge\T^\fr34\right)\T\norm{u_{xxx}}& \lesssim\T^\fr32\norm{u_{xxxx}}^\fr12\norm{u_{xx}}^\fr12=\left(\T^2\norm{u_{xxxx}}\right)^\fr12\left(\T\norm{u_{xx}}\right)^\fr12.
%\left(\T^\fr12\wedge\T^\fr34\right)\T\norm{u_{5x}}&\lesssim\T^\fr32\norm{u_{8x}}^\fr12\norm{u_{xx}}^\fr12
%=\left(\T^2\norm{u_{8x}}\right)^\fr12\left(\T\norm{u_{xx}}\right)^\fr12\\
%\left(\T^\fr12\wedge\T^\fr34\right)\T\norm{u_{7x}}&\lesssim
%\T^\fr74\norm{u_{8x}}^\fr34\norm{u_{xxxx}}^\fr14
%=\left(\T^2\norm{u_{8x}}\right)^\fr34\left(\T\norm{u_{xxxx}}\right)^\fr14\\
%\left(\T^\fr12\wedge\T^\fr34\right)\norm{u_{x}}&\lesssim\T^\fr12
%\norm{u_{xx}}^\fr12\norm{u}^\fr12
%=\left(\T\norm{u_{xx}}\right)^\fr12\norm{u}^\fr12\\
%\left(\T^\fr12\wedge\T^\fr34\right)\norm{u_{xxx}}&\lesssim\T^\fr34
%\norm{u_{xxxx}}^\fr34\norm{u}^\fr14
%=\left(\T\norm{u_{xxxx}}\right)^\fr34\norm{u}^\fr14.
\end{align*}

Choosing $\delta$ sufficiently small to absorb the first summand on the right-hand side of \eqref{stst} and controlling the second summand  by \eqref{eq:newtoshowu}, one obtains from \eqref{andthus} the estimate \eqref{eq:estu}.
\end{proof}

\begin{appendix}

\section{Interpolation estimates}\label{section:interpolationproof}

In this section we show the interpolation inequalities that were essential in Subsection \ref{subsection:parabolicvariable}. We remind the reader of the definitions \eqref{infdef}, \eqref{taudef}.
\begin{notation}
  For notational simplicity we will use $\norm{\cdot}$ to denote $\norm{\cdot}_\infty$.
\end{notation}
\begin{lemma}\label{l:interp}
For a smooth function $u:[0,\infty)^2\to\R$ and parameters $\alpha\in(0, \frac{1}{4})$, $\delta\in(0,1]$, $\tau\in (0,\infty)$, there holds
\begin{align}
%(1)
	\T\norm{u_{xxxx},u_{xx},u_{t}}_{t\geq \tau}&\lesssim  \delta^{\alpha}\T^{1+\alpha}[u_{xxxx},u_{xx},u_{t}]_{\alpha,t\geq\tau}
+\frac{1}{\delta}\norm{u}_{t\geq\tau},\label{eq:newinterpolate1}\\
%(2)
	\left(\T^{\frac{1}{2}}\vee \T^{\frac{1}{4}}\right)\norm{u_{x}}_{t\geq\tau}
	&\lesssim \delta\T \norm{u_{xxxx},u_{xx}}_{t\geq\tau}+\frac{1}{\delta}\norm{u}_{t\geq\tau},\label{eq:newinterpolate2}\\
%(3)
		\left(\T\vee\T^{\frac{1}{2}}\right)\norm{u_{xx}}_{t\geq\tau}
		&\lesssim \delta\T\norm{u_{xxxx},u_{xx}}_{t\geq\tau}+\frac{1}{\delta}\norm{u}_{t\geq\tau},\label{eq:lasthingtoshow}\\
%(4)
	\T^{\alpha}[u]_{\alpha,t\geq \tau}&\lesssim \delta\T\norm{u_{t}}_{t\geq\tau}+\delta^{\frac{1-4\alpha}{4\alpha}}\left(\T^{\frac{1}{2}}\vee \T^{\frac{1}{4}}\right)\norm{u_{x}}_{t\geq\tau}
+\frac{1}{\delta}\norm{u}_{t\geq\tau},\label{eq:newinterpolate3}\\
%(5)
	\T^{\alpha}[u_{x}]_{\alpha,t\geq \tau}&\lesssim \left(\T^{\frac{1}{2}}\wedge\T^{\frac{3}{4}}\right)\norm{u_{t}}_{t\geq\tau}
+\left(\T^{\frac{1}{2}}\vee\T^{\frac{1}{4}}\right)\norm{u_{xx}}_{t\geq\tau}
+\norm{u_{x}}_{t\geq\tau}.\label{eq:newinterpolate4}
\end{align}
\end{lemma}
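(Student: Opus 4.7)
The overall strategy is to prove each of the five inequalities by combining an elementary finite-difference identity with the Hölder remainder or with Taylor's theorem, and then choosing the spatial/temporal scale to balance the resulting terms. The additional subtlety compared to the standard Euclidean setting is that the Carnot--Carathéodory metric $(\abs{s}+(z^2\wedge z^4))^\alpha$ has two regimes: it behaves like the parabolic metric of $\partial_t-\partial_x^4$ on short spatial scales $\abs{z}\leq 1$ and like that of $\partial_t-\partial_x^2$ on long scales $\abs{z}\geq 1$. The forward differences used at time $t\geq\tau$ remain inside the half-line $\{t\geq \tau\}$, so the norms $\norm{\cdot}_{t\geq\tau}$ on both sides are consistent.

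For \eqref{eq:newinterpolate1}, fix $f\in\{u_t,u_{xx},u_{xxxx}\}$. In the temporal case I would write
$u_t(t,x)=h^{-1}\bigl(u(t+h,x)-u(t,x)\bigr)-h^{-1}\int_0^h\bigl(u_t(t+s,x)-u_t(t,x)\bigr)\,\dd s,$
which gives $\abs{u_t}\leq 2h^{-1}\norm{u}+h^\alpha[u_t]_\alpha$; setting $h=\delta\tau$ and multiplying by $\tau$ yields the desired bound. The spatial cases ($u_{xx}$ via the second symmetric difference and $u_{xxxx}$ via a four-point symmetric difference) proceed analogously, using the integral form of the remainder and the CC metric. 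One then distinguishes two regimes: if $\delta\tau\leq 1$, one picks $h\sim(\delta\tau)^{1/4}\leq 1$ (short-scale regime, CC metric behaves like $z^{4\alpha}$), while if $\delta\tau\geq 1$, one picks $h\sim(\delta\tau)^{1/2}\geq 1$ (long-scale, metric behaves like $z^{2\alpha}$). Both choices produce the same clean bound $\delta^\alpha\tau^{1+\alpha}[f]_\alpha+\delta^{-1}\norm{u}$.

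For \eqref{eq:newinterpolate2} and \eqref{eq:lasthingtoshow}, the same two-scale finite-difference analysis gives the classical Landau--Kolmogorov-type estimates $\norm{u_x}\lesssim\norm{u}/h+h\norm{u_{xx}}$ and (by applying the argument twice) $\norm{u_x}\lesssim\norm{u}/h+h^3\norm{u_{xxxx}}$, and similarly $\norm{u_{xx}}\lesssim\norm{u}^{1/2}\norm{u_{xxxx}}^{1/2}$. Choosing $h=\delta\tau^{1/2}$ in the long-time regime $\tau\geq 1$ and $h=(\delta\tau)^{1/4}$ in the short-time regime $\tau\leq 1$ produces the two alternative weights $\tau^{1/2}$ and $\tau^{1/4}$, and Young's inequality converts the product into the claimed sum. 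The appearance of $\norm{u_{xx},u_{xxxx}}$ (rather than only $\norm{u_{xxxx}}$) on the right-hand side of \eqref{eq:lasthingtoshow} accommodates the fact that in the long-time regime the trivial bound $\norm{u_{xx}}\leq\norm{u_{xx}}$ is cleanest, while Landau--Kolmogorov handles the short-time regime.

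For \eqref{eq:newinterpolate3} and \eqref{eq:newinterpolate4}, I would estimate the Hölder increment by splitting $u(t+s,x+z)-u(t,x)$ into a purely temporal and a purely spatial piece. Each piece is bounded by a minimum of the trivial bound and a bound via the appropriate first derivative (using Taylor in the spatial case, and the composite bound $\abs{u_x(t+s,x)-u_x(t,x)}\lesssim(s\norm{u_t}\norm{u_{xx}})^{1/2}$ from a mixed finite difference in the time-space mixed case needed for \eqref{eq:newinterpolate4}). Maximizing these minima over the increment, and using the identity $(\tau^{1/2}\wedge\tau^{3/4})(\tau^{1/2}\vee\tau^{1/4})=\tau$ to combine the two metric regimes, yields products of the form $\norm{u}^{1-4\alpha}((\tau^{1/2}\vee\tau^{1/4})\norm{u_x})^{4\alpha}$ (short scale) and $\norm{u}^{1-2\alpha}(\cdots)^{2\alpha}$ (long scale). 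The weighted Young inequality $A^{1-4\alpha}B^{4\alpha}\lesssim\delta^{-1}A+\delta^{(1-4\alpha)/(4\alpha)}B$ then produces exactly the $\delta$-weights appearing in \eqref{eq:newinterpolate3}, and a simple (unweighted) AM--GM suffices for \eqref{eq:newinterpolate4}. I expect the main bookkeeping obstacle to be tracking the two metric regimes consistently across all five estimates, particularly to ensure that the $\tau$-weights $\tau^{1/2}\vee\tau^{1/4}$ and $\tau^{1/2}\wedge\tau^{3/4}$ combine correctly via $(\tau^{1/2}\wedge\tau^{3/4})(\tau^{1/2}\vee\tau^{1/4})=\tau$ in the key factorization step for \eqref{eq:newinterpolate4}.
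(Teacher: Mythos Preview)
Your proposal is correct and follows essentially the same route as the paper: the paper also proves \eqref{eq:newinterpolate1} by a mollification-plus-remainder argument (it convolves $u$ with a forward kernel on scale $\delta\tau$ in time and on scale $h$ in space, then chooses $h=(\delta\tau)^{1/2}$ or $h=(\delta\tau)^{1/4}$ according to the regime), proves \eqref{eq:newinterpolate2}--\eqref{eq:lasthingtoshow} via the classical Landau--Kolmogorov interpolation with the same $\tau\gtrless 1$ case split, and proves \eqref{eq:newinterpolate3}--\eqref{eq:newinterpolate4} exactly as you describe, including the mixed estimate $\abs{u_x(t+s,x)-u_x(t,x)}\lesssim(\norm{u_t}\norm{u_{xx}}\abs{s})^{1/2}$ obtained by optimizing a convolution scale. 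The only cosmetic difference is that the paper uses one-sided mollifiers where you use finite differences; just be sure that in space you use \emph{forward} (not symmetric) second and fourth differences so that the stencil stays inside $[0,\infty)$ near $x=0$, matching your care in the temporal direction.
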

\begin{proof}
	To deduce \eqref{eq:newinterpolate1}, we write $u$ as its forward (anticipating) temporal convolution on scale $\delta\T$ plus the difference:
$u=\eta\ast u+(u-\eta\ast u).$	Differentiating and using the triangle inequality, we obtain for $t\geq\tau$ that
	\begin{align}
		\begin{split}\label{eq:conv1}
			\MoveEqLeft
			\abs{u_{t}}\leq \abs{\eta_{t}\ast u}+\abs{u_{t}-\eta\ast u_{t}}
			\lesssim \frac{1}{\delta\T}\norm{u} +\int_0^\infty \abs{u_{t}(t+s,x)-u_{t}(t,x)}\abs{\eta(s)}\,\dd s\\
			&\lesssim \frac{1}{\delta\T}\norm{u} +[u_{t}]_{\alpha,t\geq\tau} \int_0^\infty \abs{s}^{\alpha}\abs{\eta(s)}\,\dd s
			\lesssim \frac{1}{\delta\T}\norm{u} +(\delta\T)^{\alpha}[u_{t}]_{\alpha,t\geq\tau}.
		\end{split}
	\end{align}
	Similarly, forward spatial convolution on scale $h$ yields for $t\geq\tau$ that
	\begin{align}
		\begin{split}\label{eq:conv2}
		\MoveEqLeft
		\abs{u_{xx}}\leq \abs{\eta_{xx}\ast u}+\abs{u_{xx}-\eta\ast u_{xx}}
		\lesssim \frac{1}{h^{2}}\norm{u} +\int_0^\infty \abs{u_{xx}(t,x)-u_{xx}(t,x+z)}\abs{\eta(z)}\,\dd z\\
		&\lesssim \frac{1}{h^{2}}\norm{u} +[u_{xx}]_{\alpha,t\geq\tau} \int_0^\infty \left( z^2 \wedge z^4 \right)^{\alpha}\abs{\eta(z)}\,\dd z
		\lesssim \frac{1}{h^{2}}\norm{u} +\left(\abs{h}^{2}\wedge\abs{h}^{4}\right)^{\alpha}[u_{xx}]_{\alpha,t\geq\tau}
		\end{split}
	\end{align}
	and
	\begin{align}\label{eq:conv3}
		\abs{u_{xxxx}}\lesssim\frac{1}{h^{4}}\norm{u} +\left(\abs{h}^{2}\wedge\abs{h}^{4}\right)^{\alpha}[u_{xxxx}]_{\alpha,t\geq\tau}.
	\end{align}
	Combining \eqref{eq:conv1}, \eqref{eq:conv2} for $h=(\delta\T)^{\frac{1}{2}}$, and \eqref{eq:conv3} for $h=(\delta\T)^{\frac{1}{4}}$ establishes \eqref{eq:newinterpolate1}.

To show \eqref{eq:newinterpolate2} for $\T\geq 1$ we note that by classical interpolation (cf.\ \cite[Theorem 3.2.1]{K}), we have
\begin{align*}
	 \T^{\frac{1}{2}}\norm{u_{x}}\lesssim\T^{\frac{1}{2}}\norm{u_{xx}}^{\frac{1}{2}}\norm{u}^{\frac{1}{2}}\lesssim \delta\T\norm{u_{xx}}+\frac{1}{\delta}\norm{u}.
\end{align*}
For $\T<1$, we use
\begin{align*}
	 \T^{\frac{1}{4}}\norm{u_{x}}\lesssim\T^{\frac{1}{4}}\norm{u_{xxxx}}^{\frac{1}{4}}\norm{u}^{\frac{3}{4}}\lesssim \delta\T\norm{u_{xxxx}}+\frac{1}{\delta^{\frac{1}{3}}}\norm{u}.
\end{align*}
The proof of \eqref{eq:lasthingtoshow} is similar and we omit it.

For \eqref{eq:newinterpolate3}, we first consider for $t\geq \tau$ and $s\geq 0$ the temporal estimate
\begin{align*}
	\T^{\alpha}\abs{u(t+s,x+z)-u(t,x+z)}\lesssim
	\left(\delta\T\norm{u_{t}}_{t\geq\T}+\frac{1}{\delta}\norm{u}_{t\geq\T}\right)\abs{s}^{\alpha}
\end{align*}
which follows from
\begin{align*}
	\abs{u(t+s,x+z)-u(t,x+z)}\lesssim
		\Big(\norm{u}_{t\geq\T}\wedge
		\big(\norm{u_{t}}_{t\geq\T}\abs{s}\big)\Big)
	\lesssim \norm{u}_{t\geq\T}^{1-\alpha}\norm{u_{t}}_{t\geq\T}^{\alpha}\abs{s}^{\alpha}.
\end{align*}
The spatial part is estimated by
\begin{align*}
	\abs{u(t,x+z)-u(t,x)}\lesssim \left(\delta\left(\T^{\frac{1}{2}}\vee \T^{\frac{1}{4}}\right)\norm{u_{x}}_{t\geq\T}+\frac{1}{\delta}\norm{u}_{t\geq\T}\right)\left( z^2 \wedge z^4 \right)^{\alpha}.
\end{align*}
To see this, we use
\begin{align*}
	\abs{u(t,x+z)-u(t,x)}\lesssim
	\Big(
		\norm{u}_{t\geq\T}\wedge
		\big(\norm{u_{x}}_{t\geq\T}\abs{z}\big)
	\Big)
\end{align*}
and distinguish cases. For $\abs{z}\leq 1$ we obtain
\begin{align*}
	\T^{\alpha}\abs{u(t,x+z)-u(t,x)}\lesssim \norm{u}_{t\geq\T}^{1-4\alpha}\left(\T^{\frac{1}{4}}\norm{u_{x}}_{t\geq\T}\right)^{4\alpha}\abs{z}^{4\alpha}
	\lesssim \left(\frac{1}{\delta}\norm{u}_{t\geq\T}+\delta^{\frac{1-4\alpha}{4\alpha}}\T^{\frac{1}{4}}\norm{u_{x}}_{t\geq\T}\right)
\abs{z}^{4\alpha}.
\end{align*}
The analogous estimate for  $\abs{z}> 1$ gives
\begin{align*}
	\T^{\alpha}\abs{u(t,x+z)-u(t,x)}	\lesssim \left(\frac{1}{\delta}\norm{u}_{t\geq\T}+\delta^{\frac{1-2\alpha}{2\alpha}}\T^{\frac{1}{2}}
\norm{u_{x}}_{t\geq\T}\right)\abs{z}^{2\alpha}.
\end{align*}

%Inequalities \eqref{eq:newinterpolate2} and \eqref{eq:newinterpolate3} combine to
%\begin{align}\label{eq:newinterpolate2+3}
%	\T^{\alpha}[u]_{\alpha}\lesssim \T\norm{u_{t},u_{xx},u_{xxxx}}+\norm{u}.
%\end{align}

For \eqref{eq:newinterpolate4} we note that
using forward spatial convolution on scale $h$ gives
\begin{align*}
	\MoveEqLeft
	\abs{u_{x}(t,x)-u_{x}(t+s,x)}\lesssim \norm{u_{x}-\eta\ast u_{x}}_{t\geq\T}+\abs*{\eta_{x}\ast\int_{t}^{t+s}u_{t}(\xi,x)\,\dd \xi}\\
	&\lesssim h\norm{u_{xx}}_{t\geq\T}+\frac{1}{h}\norm{u_{t}}_{t\geq\T}\,\abs{s}.
\end{align*}
Optimization in $h$ yields
\begin{align*}
	\abs{u_{x}(t,x)-u_{x}(t+s,x)}\lesssim \left(\norm{u_{t}}_{t\geq\T}\,\norm{u_{xx}}_{t\geq\T}\,\abs{s}\right)^{\frac{1}{2}}
\end{align*}
and hence
\begin{align*}
	\abs{u_{x}(t,x)-u_{x}(t+s,x)}\lesssim \norm{u_{t}}_{t\geq\T}^{\alpha}\,\norm{u_{xx}}_{t\geq\T}^{\alpha}\,\norm{u_{x}}_{t\geq\T}^{1-2\alpha}\,\abs{s}^{\alpha}.
\end{align*}
From here we deduce
\begin{align*}
	\T^{\alpha}\frac{\abs{u_{x}(t,x)-u_{x}(t+s,x)}}{\abs{s}^{\alpha}}
	&\lesssim
	\T^{\alpha}\norm{u_{t}}_{t\geq\T}^{\alpha}\norm{u_{xx}}_{t\geq\T}^\alpha\norm{u_{x}}_{t\geq\T}^{1-2\alpha}\\
&\lesssim \T^{\frac{1}{2}}\,\norm{u_{t}}_{t\geq\T}^{\frac{1}{2}}\,\norm{u_{xx}}_{t\geq\T}^\frac{1}{2}+\norm{u_{x}}_{t\geq\T}\\
&\lesssim (\T^{\frac{1}{2}}\wedge\T^\frac{3}{4})\norm{u_{t}}_{t\geq\T}
+\frac{\T}{\T^{\frac{1}{2}}\wedge\T^\frac{3}{4}}\norm{u_{xx}}_{t\geq\T}+\norm{u_{x}}_{t\geq\T}\\
&=(\T^{\frac{1}{2}}\wedge\T^\frac{3}{4})\norm{u_{t}}_{t\geq\T}
+(\T^\frac{1}{2}\vee \T^\frac{1}{4})\norm{u_{xx}}_{t\geq\T}+\norm{u_{x}}_{t\geq\T}.
\end{align*}
For the spatial part, we obtain
\begin{align*}
	\T^{\alpha}\abs{u_{x}(t,x)-u_{x}(t,x+z)}
	&\lesssim
	\left.
	\begin{cases}
		 \left(\T^\frac{1}{2}\norm{u_{xx}}_{t\geq\T}\right)^{2\alpha}\norm{u_{x}}_{t\geq\T}^{1-2\alpha}\abs{z}^{2\alpha}\\
		 \left(\T^\frac{1}{4}\norm{u_{xx}}_{t\geq\T}\right)^{4\alpha}\norm{u_{x}}_{t\geq\T}^{1-4\alpha}\abs{z}^{4\alpha}
	\end{cases}
	\right\}\\
	&\lesssim \left(\left(\T^{\frac{1}{2}}\vee\T^{\frac{1}{4}}\right)\norm{u_{xx}}_{t\geq\T}+\norm{u_{x}}_{t\geq\T}\right)\left( z^2 \wedge z^4 \right)^{\alpha}.
\end{align*}
\end{proof}

\section{Proof of Proposition \ref{prop:Schauder}}\label{section:proof_schauder}
In this section we will prove Proposition \ref{prop:Schauder}.
 As usual, Schauder theory for a half space can be split into H\"older estimates on the whole space (Step~1) and H\"older estimates for inhomogeneous boundary data but vanishing right-hand side (Step~3).
Both estimates will rely on representation by a kernel: the (translation invariant) heat kernel and the Poisson kernel, respectively. The latter can be explicitly recovered from the heat kernel in the usual way. Here, we will use that the heat kernel behaves like the usual (second order) heat kernel for large times, but like the ``biharmonic heat kernel'' for small times. Both scaling behaviors are important in order to catch the crossover in the Carnot-Carath\'eodory distance entering the H\"older norm (cf.~\eqref{eq:hoelder_semi_norm}). All this will be used first to get maximal regularity on the level of the time derivative. In order to capture $u_{xxxx}$ and $u_{xx}$ individually, this information is then postprocessed by (one-dimensional and thus elementary) elliptic theory.

Splitting $f$ as
\begin{align*}
	f=(f-f(x=0))+f(x=0),
\end{align*}
we observe that it is sufficient to show \eqref{eq:Schauder_est} for solutions of
\begin{align}
	\left\{
	\begin{aligned}
		u_{t}-u_{xx}+u_{xxxx}&=f		&&\text{on }	&&t\in \R,	&&x\in (0,\infty),\\
		u=u_{xx}&=0					&&\text{for }	&&t\in \R,	&& x=0,
	\end{aligned}
	\right.
\end{align}
for the two cases
\begin{align*}
\text{(i) $f(t,x=0)=0$}\quad\text{ and }\quad\text{(ii) $f(x,t)=f(t)$.}
\end{align*}

 For the first case, an odd reflection leads to a well-behaved problem on the line.
In the second case, we take a primitive $F$ of $f$ and consider  $w:=F-u$, which satisfies
\begin{align*}
	\left\{
	\begin{aligned}
		w_{t}-w_{xx}+w_{xxxx}&=0		&&\text{on }&&t\in\R, &&x\in(0,\infty),\\
		w&=F							&&\text{for }&&t\in\R, &&x=0,\\
		w_{xx}&=0						&&\text{for }&&t\in\R, &&x=0.
	\end{aligned}
	\right.
\end{align*}
We will now develop the necessary estimates for $u$ in case (i) and $w$ in case (ii).

\begin{notation}
  For notational simplicity we will use $\norm{\cdot}$ to denote $\norm{\cdot}_\infty$.
\end{notation}

\step{Step~0: Heat kernel estimates}
%\subsubsection{}\label{subsubsection:heat_kernel}

We recall from the proof of Proposition~\ref{paraboliczeta} that the heat kernel of $\partial_t-\partial_x^2-\partial_x^4$ is of the form
\begin{align*}
	H(t,x)=H^{(2)}(t,x)\ast H^{(4)}(t,x)
\end{align*}
where
\begin{align}\label{B2}
	 H^{(2)}(t,x)=\frac{1}{t^{\frac{1}{2}}}H^{(2)}\left(1,\frac{x}{t^{\frac{1}{2}}}\right)\qquad\text{and}\qquad
	H^{(4)}(t,x)=\frac{1}{t^{\frac{1}{4}}}H^{(4)}\left(1,\frac{x}{t^{\frac{1}{4}}}\right)
\end{align}
for two Schwartz functions $H^{(2)}(1,\cdot)$ and $H^{(4)}(1,\cdot)$ with integral $1$ and vanishing first moment.

For $\alpha\in [0,\frac{1}{4}]$ we will show the estimates
\begin{align}
	\int\abs{H_{t}(t,x)}\left(\abs{x}^{4}\wedge\abs{x}^{2}\right)^{\alpha}\,\dd x&\lesssim \frac{1}{t^{1-\alpha}},\label{eq:to_show_heatkernel_1}\\
	\int\abs{H_{tt}(t,x)}\left(\abs{x}^{4}\wedge\abs{x}^{2}\right)^{\alpha}\,\dd x&\lesssim \frac{1}{t^{2-\alpha}},\label{eq:to_show_heatkernel_2}\\
	\int\abs{H_{tx}(t,x)}\left(\abs{x}^{4}\wedge\abs{x}^{2}\right)^{\alpha}\,\dd x&\lesssim \frac{1}{t^{1+\frac{1}{4}-\alpha}}\wedge\frac{1}{t^{1+\frac{1}{2}-\alpha}}.\label{eq:to_show_heatkernel_3}
\end{align}
Using the equation for $H$, we see that it is enough to prove
\begin{align}
	\int \abs{\partial_{x}^{k}H(t,x)}\abs{x}^{\beta}\,\dd x&\lesssim \left(t^{\frac{1}{4}}\right)^{\beta-k}\qquad\text{for}\quad t\leq 1,\label{eq:0f}\\
	\int \abs{\partial_{x}^{k}H(t,x)}\abs{x}^{\beta}\,\dd x&\lesssim \left(t^{\frac{1}{2}}\right)^{\beta-k}\qquad\text{for}\quad t\geq 1,\label{eq:0g}
\end{align}
for $k= 0,1,\ldots,8$ and $\beta\geq 0$.
To show these estimates, we will use the triangle inequality together with estimates for $H^{(2)}$, $H^{(4)}$ and
\begin{align}
	(H-H^{(4)})\left(t,t^{\frac{1}{4}}\hat x\right)
	&=t^{\frac{1}{4}}\int_{0}^{1}(1-\theta)\int H^{(4)}_{\hat x\hat x}\left(1,\hat x-\theta t^{\frac{1}{4}}\hat y\right)H^{(2)}(1,\hat y)\hat y^{2}\,\dd \hat y\,\dd \theta,\label{eq:0d}\\
	(H-H^{(2)})\left(t,t^{\frac{1}{2}}\hat x\right)
	&=\frac{1}{t}\int_{0}^{1}(1-\theta)\int H_{2\hat x\hat x}\left(1,\hat x-\theta \frac{1}{t^{\frac{1}{4}}}\hat y\right)H^{(4)}(1,\hat y)\hat y^{2}\,\dd \hat y\,\dd \theta\label{eq:0e}.
\end{align}

We first consider \eqref{eq:0d} and use
\begin{align*}
	\int H^{(2)}(t,y)\,\dd y=1\qquad\text{and}\qquad
	\int y H^{(2)}(t,y)\,\dd y=0
\end{align*}
to rewrite
\begin{align*}
	(H-H^{(4)})(t,x)=\int \bigl(H^{(4)}(t,x-y)-H^{(4)}(t,x)+H^{(4)}_x(t,x)y\bigr)H^{(2)}(t,y)\,\dd y.
\end{align*}
Changing variables via $x=t^{\frac{1}{4}}\hat x$ and $y=t^{\frac{1}{2}}\hat y$, we obtain from \eqref{B2} the form
\begin{align*}
	\MoveEqLeft
	(H-H^{(4)})\left(t,t^{\frac{1}{4}}\hat x\right)
	=\frac{1}{t^{\frac{1}{4}}}\int \left(H^{(4)}\left(1,\hat x-t^{\frac{1}{4}}\hat y\right)-H^{(4)}(1,\hat x)+H^{(4)}_x(1,\hat x)t^{\frac{1}{4}}\hat y\right)H^{(2)}(1,\hat y)\,\dd \hat y\\
	&=\frac{1}{t^{\frac{1}{4}}}\int \left(t^{\frac{1}{4}}\hat y\right)^{2}\int_{0}^{1} (1-\theta)H^{(4)}_{\hat x\hat x}\left(1,\hat x-\theta t^{\frac{1}{4}}\hat y\right)\,\dd \theta H^{(2)}(1,\hat y)\,\dd \hat y,
\end{align*}
which is \eqref{eq:0d}.
%For the last equality, we have used
%\begin{align*}
%	\MoveEqLeft
%	\frac{\dd}{\dd \theta}\left(f(x+\theta y)+(1-\theta) y f'(x+\theta y)\right)= (1-\theta)y^{2}f''(x+\theta y)\\
%	&\Rightarrow\quad f(x+y)-\bigl(f(x) +f'(x)y\bigr)= y^{2}\int_{0}^{1}(1-\theta)f''(x+\theta y)\,\dd \theta.
%\end{align*}
For \eqref{eq:0e} we use a very similar argument.

To argue for \eqref{eq:0f}, we differentiate \eqref{eq:0d} $k$ times with respect to $\hat{x}$ and obtain
%\begin{align}
%	\begin{split}\label{eq:intermed01}
%		\MoveEqLeft
%		\left(t^{\frac{1}{4}}\right)^{k}\partial_{x}^{k}(H-H^{(4)})\left(t,t^{\frac{1}{4}}\hat x\right)\\
%		&=t^{\frac{1}{4}}\int_{0}^{1}(1-\theta)\int \partial_{\hat x}^{k+2}H^{(4)}\left(1,\hat x-\theta t^{\frac{1}{4}}\hat y\right)H^{(2)}(1,\hat y)\hat y^{2}\,\dd \hat y\,\dd \theta
%	\end{split}
%\end{align}
%and
\begin{align}
	\begin{split}\label{B10}
		\MoveEqLeft
		\left(t^{\frac{1}{4}}\right)^{k-\beta}\partial_{x}^{k}(H-H^{(4)})\left(t,t^{\frac{1}{4}}\hat x\right)\abs*{t^{\frac{1}{4}}\hat x}^{\beta}\\
		&=t^{\frac{1}{4}}\int_{0}^{1}(1-\theta)\int \partial_{\hat x}^{k+2}H^{(4)}\left(1,\hat x-\theta t^{\frac{1}{4}}\hat y\right)\abs*{\hat x}^{\beta}H^{(2)}(1,\hat y)\hat y^{2}\,\dd \hat y\,\dd \theta.
	\end{split}
\end{align}
Using
\begin{align*}
	\MoveEqLeft
	\int \abs*{\partial_{\hat x}^{k+2}H^{(4)}\left(1,\hat x-\theta t^{\frac{1}{4}}\hat y\right)}\abs*{\hat x}^{\beta}\,\dd \hat x\\
	&\lesssim \int \abs*{\partial_{\hat x}^{k+2}H^{(4)}\left(1,\hat x-\theta t^{\frac{1}{4}}\hat y\right)}\abs*{\hat x-\theta t^{\frac{1}{4}}\hat y}^{\beta}\,\dd \hat x
	+\abs*{\theta t^{\frac{1}{4}}\hat y}^{\beta}\int \abs*{\partial_{\hat x}^{k+2}H^{(4)}\left(1,\hat x-\theta t^{\frac{1}{4}}\hat y\right)}\,\dd \hat x\\
	&=\int \abs*{\partial_{\hat x}^{k+2}H^{(4)}\left(1,\hat x\right)}\abs*{\hat x}^{\beta}\,\dd \hat x
	+\abs*{\theta t^{\frac{1}{4}}\hat y}^{\beta}\int \abs*{\partial_{\hat x}^{k+2}H^{(4)}\left(1,\hat x\right)}\,\dd \hat x\lesssim 1+\abs*{\theta t^{\frac{1}{4}}\hat y}^{\beta},
\end{align*}
we infer from \eqref{B10} that
\begin{align*}
	\MoveEqLeft
	 \left(t^{\frac{1}{4}}\right)^{k-\beta}\int\abs*{\partial_{x}^{k}(H-H^{(4)})\left(t,x\right)}\abs*{x}^{\beta}\,\dd x\\
	&\lesssim t^{\frac{1}{2}}\int\int_{0}^{1}(1-\theta)\int \abs*{\partial_{\hat x}^{k+2}H^{(4)}\left(1,\hat x-\theta t^{\frac{1}{4}}\hat y\right)}\abs*{\hat x}^{\beta}
	\abs*{H^{(2)}(1,\hat y)}\hat y^{2}\,\dd \hat y\,\dd \theta\,\dd \hat x\\
	&\lesssim t^{\frac{1}{2}}\int\abs*{H^{(2)}(1,\hat y)}\hat y^{2}\,\dd \hat y+t^{\frac{1}{2}+\frac{\beta}{4}}\int\abs*{H^{(2)}(1,\hat y)}\abs{\hat y}^{2+\beta}\,\dd \hat y
	\lesssim t^{\frac{1}{2}}+t^{\frac{1}{2}+\frac{\beta}{4}}
	\overset{t\leq 1}\lesssim t^{\frac{1}{2}}.
\end{align*}
Finally, we obtain \eqref{eq:0f} by using the previous estimate and the triangle inequality
\begin{align*}
	\MoveEqLeft
	\int\abs*{\partial_{x}^{k}H\left(t,x\right)}\abs*{x}^{\beta}\,\dd x
	\leq \int\abs*{\partial_{x}^{k}H^{(4)}\left(t,x\right)}\abs*{x}^{\beta}\,\dd x+\int\abs*{\partial_{x}^{k}(H-H^{(4)})\left(t,x\right)}\abs*{x}^{\beta}\,\dd x\\
	&\overset{\eqref{B2}}\lesssim \left(t^{\frac{1}{4}}\right)^{\beta-k}+\left(t^{\frac{1}{4}}\right)^{\beta-k+2}
	\overset{t\leq 1}\lesssim \left(t^{\frac{1}{4}}\right)^{\beta-k}.
\end{align*}
For \eqref{eq:0g} we use a similar argument based on the representation \eqref{eq:0e}.

\step{Step 1: The problem on the line: Estimates for case (i)}
%\subsection{Whole-space case}

On the line, we split \eqref{eq:Schauder_est} into
\begin{align}\label{eq:Schauder_est_whole_space}
	[u_{t}-f]_{\alpha}\lesssim [f]_{\alpha} \qquad\text{and}\qquad
	[u_{xx}]_{\alpha}+[u_{xxxx}]_{\alpha}\lesssim [u_{t}-f]_{\alpha}.
\end{align}
We further split the first item in \eqref{eq:Schauder_est_whole_space} into
\begin{align}
	\abs{(u_{t}-f)(t,0)-(u_{t}-f)(0,0)}&\lesssim t^{\alpha}[f]_{\alpha}\qquad &&\text{for } t\geq 0,\label{1_a_alpha}\\
	\abs{(u_{t}-f)(0,x)-(u_{t}-f)(0,0)}&\lesssim x^{2\alpha}[f]_{\alpha}\qquad&&\text{for } x\geq 1,\label{1_a_beta}\\
	\abs{(u_{t}-f)(0,x)-(u_{t}-f)(0,0)}&\lesssim x^{4\alpha}[f]_{\alpha}\qquad&&\text{for } 0\leq x\leq 1,\label{1_a_gamma}
\end{align}
where by translation invariance and for notational convenience, we may focus on the argument $(0,0)$, and on positive increments, i.e., $t,x\geq 0$.
Using the representation
\begin{align}\label{eq:representation}
	u(t,x)=\int_{-\infty}^{t}\int H(t-s,x-y)f(s,y)\,\dd y\,\dd s\qquad\text{and}\qquad
	\int H(t-s,y)\,\dd y=0,
\end{align}
we obtain
\begin{align*}
	(u_{t}-f)(t,x)=\int_{-\infty}^{t}\int H_{t}(t-s,x-y)\bigl( f(s,y)-f(s,x) \bigr)\,\dd y\,\dd s.
\end{align*}
To show \eqref{1_a_alpha}, we note
\begin{align*}
	\abs{(u_{t}-f)(t,0)-(u_{t}-f)(0,0)}
	&\leq\int_{0}^{t}\int \abs*{H_{t}(t-s,-y)}\abs*{ f(s,y)-f(s,0) }\,\dd y\,\dd s\\
	& +\int_{-\infty}^{0}\int \abs*{H_{t}(t-s,-y)-H_{t}(-s,-y)}\abs*{ f(s,y)-f(s,0) }\,\dd y\,\dd s.
\end{align*}
For the first integral, we note
\begin{align*}
	\MoveEqLeft\int_{0}^{t}\int \abs*{H_{t}(t-s,-y)}\abs*{ f(s,y)-f(s,0) }\,\dd y\,\dd s
	\overset{\eqref{eq:hoelder_semi_norm},\eqref{eq:to_show_heatkernel_1}}\lesssim [f]_{\alpha}\int_{0}^{t}\frac{1}{(t-s)^{1-\alpha}}\,\dd s
	\lesssim[f]_{\alpha}t^{\alpha}.
\end{align*}
The second integral is estimated as
\begin{eqnarray*}
	\lefteqn{
	\int_{-\infty}^{0}\int \abs*{H_{t}(t-s,-y)-H_{t}(-s,-y)}\abs*{ f(s,y)-f(s,0) }\,\dd y\,\dd s}\\
	&\overset{\eqref{eq:hoelder_semi_norm}}\lesssim& [f]_{\alpha}\int_{-\infty}^{0}\int \int_{-s}^{t-s} \abs*{H_{tt}(\sigma,-y)}\left( y^4 \wedge y^2 \right)^{\alpha}\,\dd \sigma\,\dd y\,\dd s\\
	&\overset{\eqref{eq:to_show_heatkernel_2}}\lesssim& [f]_{\alpha}\int_{-\infty}^{0} \int_{-s}^{t-s} \frac{1}{\sigma^{2-\alpha}}\,\dd \sigma\,\dd s
	\lesssim [f]_{\alpha}t^{\alpha}.
\end{eqnarray*}
This establishes \eqref{1_a_alpha}.

For \eqref{1_a_beta}, we rewrite
\begin{align*}
	\MoveEqLeft
	(u_{t}-f)(0,x)-(u_{t}-f)(0,0)\\
	&\overset{\eqref{eq:representation}}=\int_{-x^{2}}^{0}\int H_{t}(-s,x-y)\bigl(f(s,y)-f(s,x)\bigr)\,\dd y\,\dd s-\int_{-x^{2}}^{0}\int H_{t}(-s,-y)\bigl(f(s,y)-f(s,0)\bigr)\,\dd y\,\dd s\\
	&\qquad +\int_{-\infty}^{-x^{2}}\int \left(H_{t}(-s,x-y)-H_{t}(-s,-y)\right)f(s,y)\,\dd y\,\dd s.
\end{align*}
The two right-hand side lines are estimated via
\begin{align*}
&	\abs*{\int_{-x^{2}}^{0}\int H_{t}(-s,x-y)\bigl(f(s,y)-f(s,x)\bigr)\,\dd y\,\dd s}
+\abs*{\int_{-x^{2}}^{0}\int H_{t}(-s,-y)\bigl(f(s,y)-f(s,0)\bigr)\,\dd y\,\dd s}\\
%	&\overset{\eqref{eq:hoelder_semi_norm}}\lesssim& [f]_{\alpha}\int_{-x^{2}}^{0}\int \abs*{H_{t}(-s,-y)}\left( y^4 \wedge y^2 \right)^{\alpha}\,\dd y\,\dd s\\
	&\qquad\overset{\eqref{eq:hoelder_semi_norm}, \eqref{eq:to_show_heatkernel_1}}\lesssim [f]_{\alpha}\int_{-x^{2}}^{0}\frac{1}{(-s)^{1-\alpha}}\,\dd s
	\lesssim [f]_{\alpha}{x}^{2\alpha}
\end{align*}
and
\begin{eqnarray*}
	\lefteqn{
	\abs*{\int_{-\infty}^{-x^{2}}\int \left(H_{t}(-s,x-y)-H_{t}(-s,-y)\right)f(s,y)\,\dd y\,\dd s}}\\
	&=&\abs*{\int_{-\infty}^{-x^{2}}\int x\int_{0}^{1}H_{tx}(-s,\theta x-y)\,\dd \theta f(s,y)\,\dd y\,\dd s}\\
	&=&\abs*{x\int_{-\infty}^{-x^{2}}\int_{0}^{1}\int H_{tx}(-s,\theta x-y) \bigl(f(s,y)-f(s,\theta x)\bigr)\,\dd y\,\dd \theta\,\dd s}\\
%	&\overset{\eqref{eq:hoelder_semi_norm}}\lesssim & [f]_{\alpha}{x}\int_{-\infty}^{-x^{2}}\int\abs*{H_{tx}(-s,-y)}\left( y^4 \wedge y^2 \right)^{\alpha} \,\dd y\,\dd s\\
	&\overset{\eqref{eq:hoelder_semi_norm}, \eqref{eq:to_show_heatkernel_3}}\lesssim & [f]_{\alpha}{x}\int_{-\infty}^{-x^{2}}\frac{1}{(-s)^{1+\frac{1}{2}-\alpha}}\,\dd s
	\lesssim [f]_{\alpha}{x}^{2\alpha},
\end{eqnarray*}
where for the second equality we have used
$\int H_{tx}(\cdot,y)\,\dd y=0$ (since $\int H_x(\cdot,y)\,\dd y=0)$.
For \eqref{1_a_gamma}, we use instead
\begin{align*}
	\MoveEqLeft
	(u_{t}-f)(0,x)-(u_{t}-f)(0,0)\\
	&=\int_{-x^{4}}^{0}\int H_{t}(-s,x-y)\bigl(f(s,y)-f(s,x)\bigr)\,\dd y\,\dd s-\int_{-x^{4}}^{0}\int H_{t}(-s,-y)\bigl(f(s,y)-f(s,0)\bigr)\,\dd y\,\dd s\\
	&\qquad +x\int_{-\infty}^{-x^{4}}\int_{0}^{1}\int H_{tx}(-s,\theta x-y)\left(f(s,y)-f(s,\theta x)\right)\,\dd y\,\dd \theta\,\dd s
\end{align*}
and argue as in the proof of \eqref{1_a_beta}.

Next, we will show the second estimate in \eqref{eq:Schauder_est_whole_space}.
Replacing $u_{xx}$ by $v$ and $u_t-f$ by $g$, we see that it is enough to show for
\begin{align}\label{eq:simplified_eq}
	-v+v_{xx}=g
\end{align}
that
$[v]_{\alpha}+[v_{xx}]_{\alpha}\lesssim [g]_{\alpha}.
$
Using the equation, we see that it is enough to show
\begin{align}\label{eq:1_b_alpha-beta}
	%\norm{v}\lesssim \norm{g}\qquad\text{and}\qquad
	[v]_{\alpha}\lesssim [g]_{\alpha}.
\end{align}
We recall the representation formula
\begin{align*}
	v(t,x)=\int G(x-y)g(t,y)\,\dd y\qquad\text{for}\quad
	G(x):=\frac{1}{2}\exp(-\abs{x}).
\end{align*}
Note that $\int G\,\dd x=1$ implies $\norm{v} \leq \norm{g} $. By linearity and translation invariance, we thus obtain
\begin{align*}
	\norm{v(t,\cdot+z)-v(t,\cdot)}&\lesssim \norm{g(t,\cdot+z)-g(t,\cdot)}\leq [g]_{\alpha}\left( z^4 \wedge z^2 \right)^{\alpha},\\
	\norm{v(\cdot+s,x)-v(\cdot,x)}&\lesssim \norm{g(\cdot+s,x)-g(\cdot,x)}\leq [g]_{\alpha} \abs{s}^{\alpha},
\end{align*}
and hence \eqref{eq:1_b_alpha-beta}.

\step{Step 2: Poisson kernel}

For a solution of
\begin{align}\label{eq:fourth_order_u}
	\left\{
	\begin{aligned}
		u_{t}-u_{xx}+u_{xxxx}&=0		&&\text{on }	&&t\in \R,	&&x\in (-\infty,0),\\
		u&=v							&&\text{for }	&&t\in \R,	&& x=0,\\
		u_{xx}&=0						&&\text{for }	&&t\in \R,	&& x=0,
	\end{aligned}
	\right.
\end{align}
we will show the representation formula
\begin{align}\label{eq:representation2}
	u(t,y)=\int P(t-s,y)v(s)\,\dd s
\end{align}
and for $\alpha\in [0,\frac{1}{4})$ the estimate
\begin{align}\label{eq:estimate_poisson}
	\int \bigl(\abs{P(t,x)}+x\abs{P_{x}(t,x)}\bigr) t^{\alpha}\,\dd t\lesssim \left({x}^{4}\wedge{x}^{2}\right)^{\alpha}.
\end{align}
Here, $P$ is the Poisson kernel (with the understanding that it vanishes for $t\leq 0$):
\begin{align}
	P(t,y):=\left(G_{x}-G_{xxx}\right)(t,0,y)=-2\left(H_{x}-H_{xxx}\right)(t,y),\label{psquare}
\end{align}
where $G$ is the Green's function
\begin{align*}
	G(t,x,y):=H(t,x-y)-H(t,x+y).
\end{align*}

We start with the representation formula \eqref{eq:representation2}. Note that by integration by parts, we have that
\begin{align*}
	\MoveEqLeft
	\int\int_{0}^{\infty}(-w_{t}-w_{xx}+w_{xxxx})u\,\dd x\,\dd t
	=\int\int_{0}^{\infty}w(u_{t}-u_{xx}+u_{xxxx})\,\dd x\,\dd t\\
	&\quad -\int (uw_{x}-u_{x}w-uw_{xxx}+u_{x}w_{xx}-u_{xx}w_{x}+u_{xxx}w)\Big|_{x=0}\,\dd t\\
	&\overset{\eqref{eq:fourth_order_u}}=\int \bigl(v(w_{x}-w_{xxx})-u_{x}(w-w_{xx})+u_{xxx}w\bigr)\Big|_{x=0}\,\dd t.
\end{align*}
Applying this to $w(t,x)=G(s-t,x,y)$, so that
\begin{align*}
	\left\{
	\begin{aligned}
		-w_{t}-w_{xx}+w_{xxxx}&=\delta(t-s)\delta(x-y)		&&\text{on }&&t\in\R,&&x\in (0,\infty),\\
		w&=w_{xx}=0												&&\text{for }&&t\in\R, && x=0,
	\end{aligned}
	\right.
\end{align*}
gives \eqref{eq:representation2} in the form
\begin{align*}
	u(s,y)=\int v(w_{x}-w_{xxx})\Big|_{x=0}\,\dd t=\int P(s-t,y)v(t)\,\dd t.
\end{align*}

We now turn to \eqref{eq:estimate_poisson}, which we split into two parts:
\begin{align}
 \int_0^1 \bigl(\abs{P(t,x)}+x\abs{P_{x}(t,x)}\bigr) t^{\alpha}\,\dd t\lesssim \left({x}^{4}\wedge{x}^{2}\right)^{\alpha}, \label{st1}\\
\int_1^\infty \bigl(\abs{P(t,x)}+x\abs{P_{x}(t,x)}\bigr) t^{\alpha}\,\dd t\lesssim \left({x}^{4}\wedge{x}^{2}\right)^{\alpha}. \label{st2}
\end{align}
As in Step~0, we will establish these estimates by using that $H\approx H^{(4)}$ for $t\ll 1$ and $H\approx H^{(2)}$ for $t\gg1$. This time we need pointwise estimates, however.
Differentiating \eqref{eq:0d} and arguing similarly as in Step~0, we obtain
\begin{align*}
	\abs*{\partial_{x}^{k}(H-H^{(4)})}\lesssim \left(t^{\frac{1}{4}}\right)^{-k+1}\;\text{for}\quad t\leq 1\quad\text{and}\quad
	\abs*{x\partial_{x}^{k}(H-H^{(4)})}\lesssim \left(t^{\frac{1}{4}}\right)^{-k+2}\;\text{for}\quad t\leq 1.
\end{align*}
Since by symmetry
\begin{align*}
	\partial_{x}^{k}(H-H^{(4)})=0\;\text{for }x=0,\,k\text{ odd}\quad\text{and}\quad
	x\partial_{x}^{k}(H-H^{(4)})=0\;\text{for }x=0,\,\text{ all }k,
\end{align*}
we have the estimates
%
%\noindent\begin{minipage}{.5\linewidth}
\begin{align*}\begin{aligned}
&	\abs*{(H_{x}-H_{xxx})-(H^{(4)}_x-H^{(4)}_{xxx})}\\
	&\qquad\lesssim \abs*{(H-H^{(4)})_{x}}+\abs*{(H-H^{(4)})_{xxx}}\\
	&\qquad\lesssim \left(t^{\frac{1}{4}}\right)^{-1+1}+\left(t^{\frac{1}{4}}\right)^{-3+1}
	\lesssim \frac{1}{t^{\frac{1}{2}}},\\
&	\abs*{(H_{x}-H_{xxx})-(H^{(4)}_x-H^{(4)}_{xxx})}\\
	&\qquad\lesssim {x}\left(\norm{(H-H^{(4)})_{xx}}+\norm{(H-H^{(4)})_{xxxx}}\right)\\
	&\qquad\lesssim {x}\left(\left(t^{\frac{1}{4}}\right)^{-2+1}+\left(t^{\frac{1}{4}}\right)^{-4+1}\right)
	\lesssim {x}\frac{1}{t^{\frac{3}{4}}},\\
&	\abs*{x(H_{x}-H_{xxx})_{x}-x(H^{(4)}_x-H^{(4)}_{xxx})_{x}}\\
	&\qquad\lesssim \abs*{x(H-H^{(4)})_{xx}}+\abs*{x(H-H^{(4)})_{xxxx}}\\
	&\qquad\lesssim \left(t^{\frac{1}{4}}\right)^{-1+1}+\left(t^{\frac{1}{4}}\right)^{-3+1}
	\lesssim \frac{1}{t^{\frac{1}{2}}},\\
&	\abs*{x(H_{x}-H_{xxx})_{x}-x(H^{(4)}_x-H^{(4)}_{xxx})_{x}}\\
	&\qquad\lesssim {x}\sup\left( \abs*{\bigl(x(H-H^{(4)})_{xx}\bigr)_{x}}+\abs*{\bigl(x(H-H^{(4)})_{xxxx}\bigr)_{x}}\right)\\
	&\qquad\lesssim {x}\left(\left(t^{\frac{1}{4}}\right)^{-2+1}+\left(t^{\frac{1}{4}}\right)^{-3+2}+
\left(t^{\frac{1}{4}}\right)^{-4+1}+\left(t^{\frac{1}{4}}\right)^{-5+2}\right)\lesssim {x}\frac{1}{t^{\frac{3}{4}}}.
\end{aligned}\end{align*}
Combining these estimates yields
\begin{align*}	 \abs*{(H_{x}-H_{xxx})-(H^{(4)}_x-H^{(4)}_{xxx})}+\abs*{x(H_{x}-H_{xxx})_{x}-x(H^{(4)}_x-H^{(4)}_{xxx})_{x}}\lesssim \frac{{x}}{t^{\frac{3}{4}}}\wedge\frac{1}{t^{\frac{1}{2}}}
\end{align*}
and implies
\begin{align*}
& \int_{0}^{1}\left(\abs*{(H_{x}-H_{xxx})-(H^{(4)}_x-H^{(4)}_{xxx})}
+\abs*{x(H_{x}-H_{xxx})_{x}-x(H^{(4)}_x-H^{(4)}_{xxx})_{x}}\right)t^{\alpha}\,\dd t\\
	&\lesssim {x}\int_{0}^{1}t^{\alpha-\frac{3}{4}}\wedge \int_{0}^{1}t^{\alpha-\frac{1}{2}}\,\dd t
	\lesssim {x}\wedge 1
	\overset{\alpha\leq \frac{1}{4}}\lesssim \left({x}^{2}\wedge{x}^{4}\right)^{\alpha}.
\end{align*}
According to the representation \eqref{psquare} and the triangle inequality, it suffices for \eqref{st1} to show
\begin{align}\label{eq:toshowtriangle}
	 \int_{0}^{1}\left(\abs{H^{(4)}_x}+\abs{xH^{(4)}_{xx}}+\abs{H^{(4)}_{xxx}}+\abs{xH^{(4)}_{xxxx}}\right)t^{\alpha}\,\dd t\lesssim \left({x}^{2}\wedge{x}^{4}\right)^{\alpha}.
\end{align}
For $\hat x=\frac{x}{t^{\frac{1}{4}}}$ and $t\leq 1$ we recall \eqref{B2} and estimate
\begin{align*}
	\MoveEqLeft
	\abs{H^{(4)}_x}+\abs{xH^{(4)}_{xx}}+\abs{H^{(4)}_{xxx}}+\abs{xH^{(4)}_{xxxx}}\\
	&= \frac{1}{t^{\frac{1}{2}}}\left(\abs*{H^{(4)}_{\hat x}\left(1,\hat x\right)}
	+\abs*{\hat x}\abs*{H^{(4)}_{\hat x\hat x}\left(1,\hat x\right)}\right)
	+\frac{1}{t}\left(\abs*{H^{(4)}_{\hat x\hat x\hat x}\left(1,\hat x\right)}
	+\abs*{\hat x}\abs*{H^{(4)}_{\hat x\hat x\hat x\hat x}\left(1,\hat x\right)}\right)\\
	&\leq \frac{1}{t}\left(\abs{H^{(4)}_{\hat x}}+\abs{\hat x}\abs{H^{(4)}_{\hat x\hat x}}+\abs{\hat x}\abs{H^{(4)}_{\hat x\hat x\hat x}}+\abs{H^{(4)}_{\hat x\hat x\hat x\hat x}}\right)\left(1,\hat x\right).
\end{align*}
Using this in \eqref{eq:toshowtriangle} and changing variables $t\to \hat x=\frac{x}{t^{\frac{1}{4}}}$, we obtain
\begin{eqnarray*}
	\lefteqn{ \int_{0}^{1}\left(\abs{H^{(4)}_x}+\abs{xH^{(4)}_{xx}}+\abs{H^{(4)}_{xxx}}+\abs{xH^{(4)}_{xxxx}}\right)t^{\alpha}\,\dd t}\\
	&\lesssim& \int_{x}^{\infty}\abs*{\frac{x}{\hat x}}^{4\alpha}\left(\abs{H^{(4)}_{\hat x}}+\abs{\hat x}\abs{H^{(4)}_{\hat x\hat x}}+\abs{H^{(4)}_{\hat x\hat x\hat x}}+\abs{\hat x}\abs{H^{(4)}_{\hat x\hat x\hat x\hat x}}\right)
	\frac{\dd \hat x}{\abs{\hat x}}\\
	&\leq&
	\begin{cases}
		x^{4\alpha}\int_{0}^{\infty}\frac{1}{\abs{\hat x}^{4\alpha}}\left(\abs{H^{(4)}_{\hat x}}+\abs{\hat x}\abs{H^{(4)}_{\hat x\hat x}}+\abs{H^{(4)}_{\hat x\hat x\hat x}}+\abs{\hat x}\abs{H^{(4)}_{\hat x\hat x\hat x\hat x}}\right)
	\frac{\dd \hat x}{\abs{\hat x}} & x\leq 1\\
		\int_{1}^{\infty}\left(\abs{H^{(4)}_{\hat x}}+\abs{\hat x}\abs{H^{(4)}_{\hat x\hat x}}+\abs{H^{(4)}_{\hat x\hat x\hat x}}+\abs{\hat x}\abs{H^{(4)}_{\hat x\hat x\hat x\hat x}}\right)
	\,\dd \hat x & x\geq 1
	\end{cases}\\
	&\lesssim& {x}^{4\alpha}\wedge 1
	\overset{\alpha\geq 0}\lesssim \left({x}^{4}\wedge {x}^{2}\right)^{\alpha}.
\end{eqnarray*}
Here we have used that $\alpha\in (0,\frac{1}{4})$ and properties of $\abs{H^{(4)}_{\hat x}}+\abs{\hat x}\abs{H^{(4)}_{\hat x\hat x}}+
\abs{H^{(4)}_{\hat x\hat x\hat x}}+\abs{\hat x}\abs{H^{(4)}_{\hat x\hat x\hat x\hat x}}$, namely, that
it is integrable at infinity and, by evenness of $H^{(4)}$, vanishes linearly at zero.
This establishes \eqref{st1}.

For \eqref{st2}, we use a similar argument; we omit the details and summarize the main points. In this case the pointwise estimates for $t\geq 1$ are
\begin{align*}
	\abs*{\partial_{x}^{k}(H-H^{(2)})}\lesssim \left(t^{\frac{1}{2}}\right)^{-k-2}\quad\text{and}\quad
	\abs*{x\partial_{x}^{k}(H-H^{(2)})}\lesssim \left(t^{\frac{1}{2}}\right)^{-k-1}.
\end{align*}
We use these estimates to deduce
\begin{align*}	 \abs*{(H_{x}-H_{xxx})-(H^{(2)}_x-H^{(2)}_{xxx})}+\abs*{x(H_{x}-H_{xxx})_{x}-x\left(H^{(2)}_x-H^{(2)}_{xxx}\right)_{x}}\lesssim \frac{{x}}{t^2}\wedge\frac{1}{t^{-\frac{3}{2}}}.
\end{align*}
From here we deduce the integral bound
\begin{align*}
\int_1^\infty\left(\abs*{(H_{x}-H_{xxx})-(H^{(2)}_x-H^{(2)}_{xxx})}\right.
&\left.+\abs*{x(H_{x}-H_{xxx})_{x}-x(H^{(2)}_x-H^{(2)}_{xxx})_{x}}\right)t^{\alpha}\,\dd t\\
	&\lesssim {x}\wedge 1
	\overset{\alpha\leq \frac{1}{4}}\lesssim \left({x}^{2}\wedge{x}^{4}\right)^{\alpha}.
\end{align*}
Hence it suffices to consider the heat kernel, for which we observe
\begin{align*}	 \int_{1}^{\infty}\left(\abs{H^{(2)}_x}+\abs{xH^{(2)}_{xx}}+\abs{H^{(2)}_{xxx}}+\abs{xH^{(2)}_{xxxx}}\right)
t^{\alpha}\,\dd t\lesssim
{x}\wedge{x}^{2\alpha}\overset{\alpha\leq \frac{1}{4}}\lesssim \left(x^4\wedge x^2\right)^\alpha.
\end{align*}

\step{Step 3: Dirichlet problem without forcing on the half-line: Estimates for case (ii)}

Here we will show that a solution $u(t,x)$ of \eqref{eq:fourth_order_u} satisfies
\begin{align*}
	[u_{t}]_{\alpha}+[u_{xx}]_{\alpha}+[u_{xxxx}]_{\alpha}\lesssim [v_{t}]_{\alpha},
\end{align*}
which splits into
\begin{align}\label{eq:Schauder_est_D_half_space}
	[u_{t}]_{\alpha}\lesssim [v_{t}]_{\alpha} \qquad\text{and}\qquad
	[u_{xx}]_{\alpha}+[u_{xxxx}]_{\alpha}\lesssim [u_{t}]_{\alpha}.
\end{align}
We claim that, for the first item in \eqref{eq:Schauder_est_D_half_space}, it is enough to show
\begin{align}
	\abs{u_{t}(t,x)-u_{t}(0,x)}&\lesssim t^{\alpha}[v_{t}]_{\alpha}\qquad &&\text{for } t\geq 0,\label{3_a_alpha}\\
	\abs{u_{t}(0,x)-u_{t}(0,0)}&\lesssim \left({x}^{4}\wedge{x}^{2}\right)^{\alpha}[v_{t}]_{\alpha}\qquad&&\text{for }x\geq 0,\label{3_a_beta}\\
	{x}\abs{u_{tx}(0,x)}&\lesssim \left({x}^{4}\wedge{x}^{2}\right)^{\alpha}[v_{t}]_{\alpha}\qquad&&\text{for }x\geq 0.\label{3_a_gamma}
\end{align}
Indeed, \eqref{3_a_beta} and \eqref{3_a_gamma} yield spatial continuity
\begin{align*}
	\abs{u_{t}(0,x+z)-u_{t}(0,x)}\lesssim \left( z^4 \wedge z^2 \right)^{\alpha}[v_{t}]_{\alpha}\quad\text{for $z\geq 0$.}
\end{align*}
To see this, we distinguish between two cases. For $z\leq x$, we use
\begin{eqnarray*}
	\lefteqn{
	\abs{u_{t}(0,x+z)-u_{t}(0,x)}\leq \int_{x}^{x+z}\abs{u_{tx}(0,y)}\,\dd y}\\
	&\overset{\eqref{3_a_beta}}\lesssim& [v_{t}]_{\alpha}\int_{x}^{x+z}\frac{1}{\abs{y}}\left( y^4 \wedge y^2 \right)^{\alpha}\,\dd y
	\lesssim \frac{\abs{z}}{\abs{x}}\left(x^4\wedge x^2\right)^\alpha\\
& \lesssim &\left( z^4 \wedge z^2 \right)^{\alpha}[v_{t}]_{\alpha},
\end{eqnarray*}
where we have (twice) used that the function $\frac{1}{y}\left(y^4\wedge y^2\right)^\alpha$ is monotone decreasing (since $\alpha\leq \frac{1}{4}$) and that $x\geq z$.
For the case $z\geq x$ we have
\begin{align*}
	\MoveEqLeft
	\abs{u_{t}(0,x+z)-u_{t}(0,x)}\leq \abs{u_{t}(0,x+z)-u_{t}(0,0)}+\abs{u_{t}(0,0)-u_{t}(0,x)}\\
	&\overset{\eqref{3_a_gamma}}\lesssim \left((x+z)^4\wedge(x+z)^2 \right)^{\alpha}[v_{t}]_{\alpha}
	\overset{x\leq z}\lesssim\left( z^4 \wedge z^2 \right)^{\alpha}[v_{t}]_{\alpha}.
\end{align*}

We now turn to the arguments for \eqref{3_a_alpha}--\eqref{3_a_gamma}. From the representation \eqref{eq:representation2} and differentiation with respect to time, we obtain
\begin{align}\label{eq:representation_ut}
	u_{t}(t,x)
%=\int P(t-s,x)v_{t}(s)\,\dd s
=\int P(s,x)v_{t}(t-s)\,\dd s,
\end{align}
from which we deduce \eqref{3_a_alpha}:
\begin{align*}
	\abs{u_{t}(t,x)-u_{t}(0,x)}=\abs*{\int P(s,x)\bigl(v_{t}(t-s)-v_{t}(-s)\bigr)\,\dd s}
	\overset{\eqref{eq:estimate_poisson}\text{ for $\alpha=0$}}\lesssim t^{\alpha}[v_t]_{\alpha}.
\end{align*}

For \eqref{3_a_beta}, we note that from the boundary condition in \eqref{eq:fourth_order_u}, $\int P(s,x)\,\dd s=1$ (which can be seen from the representation formula \eqref{eq:representation2} with $u\equiv 1$),
and the representation \eqref{eq:representation_ut}, we obtain
\begin{align*}
	u_{t}(0,x)-u_{t}(0,0)=\int P(s,x)\bigl(v_{t}(-s)-v_{t}(0)\bigr)\,\dd s,
\end{align*}
so that as desired
\begin{align*}
	\abs{u_{t}(0,x)-u_{t}(0,0)}\leq [v_t]_{\alpha}\int P(s,x)s^{\alpha}\,\dd s
	\overset{\eqref{eq:estimate_poisson}}\lesssim [v_t]_{\alpha}\left({x}^{4}\wedge{x}^{2}\right)^{\alpha}.
\end{align*}
To show \eqref{3_a_gamma}, we first note that \eqref{eq:representation_ut} and $\int P_{x}(s,x)\,\dd s=0$ imply
\begin{align*}
	u_{tx}(0,x)=\int P_{x}(s,x)v_{t}(-s)\,\dd s=\int P_{x}(s,x)\bigl(v_{t}(-s)-v_{t}(0)\bigr)\,\dd s,
\end{align*}
so that
\begin{align*}
	{x}\abs{u_{tx}(0,x)}\leq [v_t]_{\alpha}\int {x} \abs{P_{x}(s,x)}s^{\alpha}\,\dd s
	\overset{\eqref{eq:estimate_poisson}}\lesssim [v_t]_{\alpha}\left({x}^{4}\wedge {x}^{2}\right)^{\alpha}.
\end{align*}

It remains to show the second estimate in \eqref{eq:Schauder_est_D_half_space}. To this end, we consider $w:=u_{xx}$, $g:=u_t$ related by
\begin{align*}\begin{aligned}
  w-w_{xx}&=g &&x>0\\
  w&=0&&x=0.\end{aligned}
\end{align*}
Since $t$ is just a parameter (and using the equation), it suffices to show
\begin{align*}
  \norm{w}\lesssim\norm{g}\quad\text{and}\quad [w]_\alpha \lesssim [g]_\alpha,
\end{align*}
where both denote purely spatial norms. For the first estimate, we proceed as in Step 1. We use odd reflection of $w$, $g$ and the representation
\begin{align*}
  w(x)=\int G(x-y)\,g(y)\dd y ,\quad\text{with}\quad G(x)=\fr12 \exp(-\abs{x})
\end{align*}
denoting the fundamental solution of $1-\partial_x^2$.
For the second estimate, we use the Green's function
\begin{align*}
  G(x,y):=G(x-y)-G(x+y)
\end{align*}
xand the representation $w(x)=\int_0^\infty G(x,y)g(y)\,\dd y.$
%and proceed as in the proof of \eqref{eq:Schauder_est_whole_space}.

\end{appendix}
\section*{Acknowledgements}
S. Scholtes was
partially supported by DFG Grant WE 5760/1-1.

\bibliographystyle{amsalpha}

\bibliography{references}{}

%
%
%\begin{bibdiv}
%\begin{biblist}
%
%
%\bib{OR}{article}{
%      author={Otto, Felix},
%      author={Reznikoff, Maria~G.},
%       title={Slow motion of gradient flows},
%        date={2007},
%     journal={J. Differential Equations},
%      volume={237},
%      number={2},
%       pages={372\ndash 420},
%}
%
%
%\bib{SW}{article}{
%	author={Scholtes, Sebastian},
%	author={Westdickenberg, Maria G.},
%	journal={submitted},
%	title={{Metastability of the Cahn--Hilliard equation in one space dimension}},
%	year={2017},
%}
%
%\bib{OW}{article}{
%      author={Otto, Felix},
%      author={Westdickenberg, Maria~G.},
%       title={Relaxation to equilibrium in the one-dimensional
%  {C}ahn-{H}illiard equation},
%        date={2014},
%     journal={SIAM J. Math. Anal.},
%      volume={46},
%      number={1},
%       pages={720\ndash 756},
%}
%
%
%
%\end{biblist}
%\end{bibdiv}

\end{document}